\newtheorem{thm}{Theorem}[section]
\newtheorem{cor}[thm]{Corollary}
\newtheorem{lem}[thm]{Lemma}
\newtheorem{prop}[thm]{Proposition}
\theoremstyle{definition}
\theoremstyle{remark}
\newtheorem{rem}[thm]{Remark}
\numberwithin{equation}{section}
\DeclareMathSymbol{\C}{\mathalpha}{AMSb}{"43}
\newcommand{\bt}{\begin{thm}}
\newcommand{\et}{\end{thm}}
\newcommand{\beq}{\begin{equation}}
\newcommand{\eeq}{\end{equation}}
\newcommand{\n}{\nabla}
\newcommand{\N}{\mathbb{N}}
\newcommand{\R}{{\mathbb{R}}}
\newcommand{\RT}{{\mathbb{R}^3}}
\newcommand{\E}{{\mathcal{E}}}
\newcommand{\K}{{\mathcal{K}}}
\newcommand{\A}{{\mathcal{A}}}
\renewcommand{\P}{{\mathcal{P}}}
\newcommand{\M}{{\mathcal{M}}}
\newcommand{\irt}{\int_{\mathbb{R}^3}}
\newcommand{\weakto}{\rightharpoonup}
\newcommand{\bsub}{\begin{subequations}}
\newcommand{\esub}{\end{subequations}$\!$}
\begin{document}
	\title[Schr\"{o}dinger-Bopp-Podolsky system]
{On a zero mass Schr\"odinger-Bopp-Podolsky system: ground states,  nonexistence results and asymptotic behaviour
}

\author[A. Pomponio]{Alessio Pomponio}
\address{A. Pomponio
\newline\indent Dipartimento di Meccanica, Matematica e Management,\newline \indent
	Politecnico di Bari
	\newline\indent
	Via Orabona 4,  70125  Bari, Italy}
\email{alessio.pomponio@poliba.it}

\author[L. Yang]{Lianfeng Yang}
\address{L. Yang
\newline \indent $^1$ School of Mathematics and Information Science, \newline \indent Guangxi University
\newline \indent Nanning, Guangxi, P. R. China
\newline\indent $^2$ Dipartimento di Meccanica, Matematica e Management,
\newline \indent Politecnico di Bari
\newline\indent	Via Orabona 4,  70125  Bari, Italy
}
\email{yanglianfeng2021@163.com}

    \begin{abstract}
		In this paper, we consider the following {\em zero mass} Schr\"{o}dinger-Bopp-Podolsky system
		\[
			\begin{cases}
				-\Delta u  +q^2\phi u=|u|^{p-2}u,\\
				-\Delta \phi+a^2\Delta^2\phi=4\pi u^2,
			\end{cases}
		 \text{ in } \RT,	
		 \]
		where $a>0$ and $q\ne 0$. 
We complete the study initiated in \cite{CDPSY25}, which relied on a perturbation argument to establish the existence of weak solutions. Here, in contrast, our approach, based on the Mountain Pass Theorem and the splitting lemma, directly yields a ground state solution for $p \in (4,6)$.
		Moreover, by deriving a Pohozaev identity, we further obtain some nonexistence results for suitable $p$. Finally, based on the minimax characterization, we also analyse, in the radial case, the asymptotic behaviour of the solutions obtained as $a\to 0$, thereby establishing a link with the zero mass Schr\"odinger-Poisson system.
		
\end{abstract}

\keywords{Schr\"{o}dinger-Bopp-Podolsky system; zero mass problem; variational methods; asymptotic behaviour.}
\subjclass[2020]{35J48, 35J50, 35Q60.}
	\maketitle

\section{Introduction}   
    The Bopp-Podolsky theory, developed independently by Bopp \cite{B40} and Podolsky \cite{P42}, is a second-order gauge theory for the electromagnetic field that addresses   the so-called {\sl infinity problem} associated with a point charge in the classical Maxwell theory. Specifically, in Maxwell theory,  by the well-known Gauss law (or Poisson equation), the electrostatic potential $\phi$, for a given charge distribution with density $\rho$, satisfies the equation
	\begin{equation}\label{eq3.6}
		-\Delta \phi=\rho \quad\text{ in } \R^3.
	\end{equation}
	If we let $\rho=4\pi \delta_{x_0}$ with $x_0\in \R^3$, then the fundamental solution of equation \eqref{eq3.6} is $\mathcal{C}(x-x_0)$ with $\mathcal{C}(x):=1/|x|$, and therefore the energy of the corresponding electrostatic field is not finite since
	$$\int_{\R^3}|\nabla \mathcal{C}|^2dx=+\infty.$$
	On the other hand, in the Bopp-Podolsky theory, equation \eqref{eq3.6} is replaced by
	\begin{equation}\label{eq2.5}
		-\Delta \phi+a^2\Delta^2\phi=\rho \quad\text{ in } \R^3,
	\end{equation}
where $a>0$ can be interpreted as a cut-off distance or can be linked to an effective radius for the electron.
	Taking $\rho=4\pi \delta_{x_0}$ again, the explicit solution of equation \eqref{eq2.5} is $\mathcal{K}_a(x-x_0)$, where
    \begin{equation*}\label{K}
        \mathcal{K}_a(x):=\frac{1-e^{-|x|/a}}{|x|},
    \end{equation*}
    and, since
	$$\int_{\R^3}|\nabla \mathcal{K}_a|^2dx+a^2\int_{\R^3}|\Delta \mathcal{K}_a|^2dx<+\infty,$$
the energy of the electrostatic field generated by a point charge is finite. Both Bopp and Podolsky were influenced by the work of Yukawa \cite{yukawa} (see \cite[p. 345]{B40} and \cite[\S 4]{P42}) and indeed the {\em Bopp-Podolsky potential} $\K_a$ involves the Coulomb and the Yukawa potentials (see for more details \cite[Proposition 2.1]{CDPSY25}).

Recently, the authors in \cite{CKP19} considered the Schr\"odinger spectrum of a hydrogen atom 
when the conventional Coulomb pair energy between a point electron and a point proton 
is replaced by its Bopp-Land\'e-Thomas-Podolsky (BLTP) modification. 
They showed that, in order to match experimental Lyman-$\alpha$ transition data, 
the parameter $a$ of the potential $\K_a$ must satisfy
$a \lesssim 10^{-18}~\mathrm{m}$,
which is consistent with existing bounds on the electron radius. 

Moreover, in \cite[Section 2]{DS19}, d'Avenia and  Siciliano coupled a Schr\"{o}dinger field $\psi=\psi(t,x)$ with its electromagnetic field in the Bopp-Podolsky
theory, and, in particular, in the electrostatic case for the standing waves $\psi(t,x)=e^{i\omega t}u(x)$, they first derived and investigated the  following system
	\begin{equation}\label{eq2.6}
		\begin{cases}
			-\Delta u +\omega u +q^2\phi u=|u|^{p-2}u , \\
			-\Delta \phi+a^2\Delta^2\phi=4\pi u^2,
		\end{cases}
        \text{ in } \R^3.
	\end{equation}
    From a physical point of view, in \eqref{eq2.6}, $a>0$, the parameter of the potential $\K_a$, has dimension of the inverse of mass; $q\neq0$ denotes the coupling constant of the interaction between the particle and its electromagnetic field; $\omega\in\R$ is the frequency of the standing wave $\psi(t,x)=e^{i\omega t}u(x)$;  $\phi$ is the electrostatic potential. 
    Since then, a lot of works related to system  \eqref{eq2.6} have been carried out. For instance, we refer the reader to the interesting results obtained in \cite{DPRS23,FS21,RSS24} for the problem with different constraints,  in \cite{CT20,LPT20,SDS24} for critical cases, in \cite{WCL22,Z24} concerning sign-changing solutions, in \cite{DG22,H19,H20} for nonlinear Schr\"odinger-Bopp-Podolsky-Proca system on  manifolds,  \cite{CLRT22,G23,S20} in further different contexts (see also references therein).

In the case $a=0$,  system \eqref{eq2.6} becomes the well-known Schr\"odinger-Poisson system
	\begin{equation*}\label{eq2.60}
		\begin{cases}
			-\Delta u +\omega u +q^2\phi u=|u|^{p-2}u , \\
			-\Delta \phi=4\pi u^2,
		\end{cases}
        \text{ in } \R^3.
	\end{equation*}
In particular, the {\em zero mass} Schr\"odinger-Poisson system,  namely with $\omega=0$, 
	\begin{equation}\label{SP}\tag{$\P_0$}
		\begin{cases}
			-\Delta u +q^2\phi u=|u|^{p-2}u , \\
			-\Delta \phi=4\pi u^2,
		\end{cases}
        \text{ in } \RT,
	\end{equation}
has also been extensively studied in \cite{IR12,R10}. First,  for $p\in (18/7,3)$, Ruiz in \cite{R10} obtained the existence of a solution to the system \eqref{SP} as a minimizer of the associated energy functional 
\begin{equation*}\label{eq7.16}
	I_0(u) := \frac{1}{2}\|\nabla u\|_2^2 + \frac{q^2}{4}\irt \irt \frac{u^2(x) u^2(y)}{|x-y|}dxdy - \frac{1}{p}\|u\|^p_p,
\end{equation*}
in the  functional space
\begin{equation*}
	E_r:=\left\{u\in D^{1,2}(\RT):u\text{ is radial and }\irt \irt \frac{u^2(x) u^2(y)}{|x-y|}dxdy  <+\infty\right\},
\end{equation*} 
    where    
    $D^{1,2}(\R^3):=\{u\in L^6(\R^3):|\nabla u|\in L^2(\R^3)\}$ endowed with the usual norm $ \|\n \cdot\|_2$.
In \cite{IR12}, instead, the authors proved the existence of a  ground state  and of infinitely many radial bound states for $p\in (3,6)$. For related problems, see also \cite{GL24,MMV16}.

 Motivated by these results, the authors in \cite{CDPSY25}  studied the existence of weak solutions to the zero mass Schr\"odinger-Bopp-Podolsky system, given by:
    \begin{equation}\label{eq1.1}\tag{$\P_a$}
    	\begin{cases}
    		-\Delta u +q^2\phi u=|u|^{p-2}u,  \\
    		-\Delta \phi+a^2\Delta^2\phi=4\pi u^2,
    	\end{cases}
    	\text{ in }  \R^3,
    \end{equation}
    where $a>0$, $q\neq 0$, and $p\in (3,6)$. To this end,  {\em Bopp-Podolsky energy} was introduced as 
    \begin{equation*}\label{eq:functional}
    	V(f,g) :=  \int_{\R^3} \int_{\R^3} \K_a(x-y) f(x) g(y)  dxdy
    \end{equation*}
    for measurable functions $f,g\colon \R^3\to \R$, and the space $\E$ was defined as the set of functions in $D^{1,2}(\R^3)$ with finite  Bopp-Podolsky energy:
    \begin{equation*}\label{E}
    	\E:=\left\{u \in D^{1,2}(\RT) : V(u^2,u^2)<+\infty\right\}
    \end{equation*}
    equipped with the norm
    $$\|u\|_{\E}:=\left(\|\nabla u\|^2_2+V(u^2,u^2)^\frac{1}{2}\right)^\frac{1}{2}.$$
    Meanwhile, let $\E_r$ denote the subspace of radial functions in $\E$. 
In addition, set
\begin{equation*}\label{eq:A}
	\mathcal{A}:=\Big \{\varphi\in  D^{1,2}(\R^3):\Delta\varphi \in L^2(\R^3)\Big\}
\end{equation*}
equipped with the scalar product
$$\langle\varphi,\psi\rangle_{\mathcal{A}}:=\int _{\R^3}\nabla\varphi\cdot\nabla\psi dx+ a^2\int _{\R^3}\Delta\varphi\Delta\psi dx.$$
We denote by $\|\cdot\|_\A$ the associated norm.
From \cite{DS19}, we know that $C_c^\infty(\R^3)$  is dense in  the Hilbert space $\A$ and
\begin{equation}\label{eq7.22}
	\mathcal{A}\hookrightarrow L^\tau(\R^3), \quad\text{ for }\tau\in [6,+\infty].
\end{equation}
We also write $\A_r$ for the subspace of radial functions in $\A$.

As shown in \cite{CDPSY25} (see also Lemma \ref{thmE}), for each $u\in \E$, $$\phi_u(x):=(\K_a*u^2 )(x)=\int_{\R^3}\frac{1-e^{-\frac{|x-y|}{a}}}{|x-y|}u^2(y)dy\in \A$$ is the unique weak solution 
	of $$-\Delta \phi_u+a^2\Delta^2\phi_u=4\pi u^2 \qquad \text{in } \RT.$$
So, in order to find a weak solution  to system \eqref{eq1.1}, one need to consider only the nonlocal equation
\begin{equation}\label{nonloc}
-\Delta u+q^2\phi_u u=|u|^{p-2}u \qquad\text{in }\RT.
\end{equation}
Following \cite[Section 5]{CDPSY25}, such solutions can be find as critical point of the  functional
\begin{equation*}\label{I}
	\begin{aligned}
		I_a(u) 
		&:=\frac{1}{2}\|\nabla u\|_2^2 + \frac{q^2}{4}\int_{\R^3}\int_{\R^3}\K_a(x-y)u^2(x)u^2(y)dxdy - \frac{1}{p}\|u\|^p_p,
	\end{aligned}
\end{equation*}
which is well-defined on $\E$ for each $p\in [3,6]$, and on $\E_r$ for each $p\in (18/7,6]$.
Namely if $u\in\E$ is a critical point of $I_a$, then it solves \eqref{nonloc} in a weak sense and $(u,\phi_u)$ is a weak solution of system \eqref{eq1.1}.

Based on a careful analysis on this functional framework, and by applying an approximation procedure that added a small mass term followed by passing to the limit, in \cite{CDPSY25} the authors finally proved that there exists a nontrivial weak solution in $\E$  (or $\E_r$) to system \eqref{eq1.1} for $p\in (3,6)$. However, therein, neither the existence of ground state solutions nor the asymptotic  as $a\to 0$ are considered. Moreover nonexistence results are also not mentioned.

In this paper, therefore, we aim to complete the study initiated in \cite{CDPSY25} focusing on these three aspects.

Our first result concerns the existence of ground state solution to \eqref{eq1.1}.  
Inspired by \cite{DS19,IR12} and adopting a different strategy from that in \cite{CDPSY25}, by a Mountain Pass approach  in combination with a splitting lemma, we obtain the following.

	\begin{thm}\label{Thm2}
		Let $q\ne 0$. For any $p\in (4,6)$ and $a>0$, there exists a ground state solution in $\E$ (or $\E_r$) to system \eqref{eq1.1}.
	\end{thm}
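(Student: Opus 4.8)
The plan is to realise the ground state as a mountain–pass critical point of $I_a$ on $\E$ (respectively $\E_r$), and to recover the compactness lost to translation invariance through a global compactness (splitting) decomposition of Palais–Smale sequences. I would begin by checking the mountain–pass geometry. Since $I_a(0)=0$ and, by the embedding $\E\hookrightarrow L^p(\R^3)$ for $p\in[3,6]$ recalled from the functional framework of \cite{CDPSY25}, the subcritical term satisfies $\tfrac1p\|u\|_p^p=O(\|u\|_\E^p)$, while the positive part $\tfrac12\|\nabla u\|_2^2+\tfrac{q^2}{4}V(u^2,u^2)$ is bounded below, for $\|u\|_\E$ small, by a multiple of $\|u\|_\E^4$ (the quartic nonlocal term being the weakest among the positive contributions). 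Here the hypothesis $p>4$ is decisive: it makes the $L^p$ term negligible against this quartic lower bound, giving a sphere $\|u\|_\E=\rho$ on which $I_a\ge\alpha>0$. For the descending direction I use that, along any ray with $u\neq0$,
\[
I_a(tu)=\frac{t^2}{2}\|\nabla u\|_2^2+\frac{q^2t^4}{4}V(u^2,u^2)-\frac{t^p}{p}\|u\|_p^p,
\]
and again $p>4$ forces the last term, of order $t^p$, to dominate the quartic one as $t\to\infty$, so $I_a(tu)\to-\infty$ and a suitable $e=t_0u$ with $I_a(e)<0$ exists. The same one–variable analysis shows $t\mapsto I_a(tu)$ has a unique positive maximum, so the mountain–pass level $c$ coincides with the Nehari level
\[
m:=\inf_{u\in\mathcal N}I_a(u),\qquad \mathcal N:=\{u\in\E\setminus\{0\}:\langle I_a'(u),u\rangle=0\},
\]
and $c=m>0$.

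Next I would produce a Palais–Smale sequence $(u_n)$ at level $c=m$ and prove it is bounded, once more exploiting $p>4$: from
\[
I_a(u_n)-\tfrac1p\langle I_a'(u_n),u_n\rangle=\Big(\tfrac12-\tfrac1p\Big)\|\nabla u_n\|_2^2+q^2\Big(\tfrac14-\tfrac1p\Big)V(u_n^2,u_n^2),
\]
both coefficients are strictly positive, so the left–hand side $c+o(1)+o(1)\|u_n\|_\E$ controls $\|\nabla u_n\|_2^2$ and $V(u_n^2,u_n^2)$ simultaneously, yielding $\sup_n\|u_n\|_\E<\infty$. Passing to a subsequence, $u_n\weakto u$ in $\E$; using the compactness of the embedding into $L^p_{\mathrm{loc}}$ together with the weak sequential continuity of the nonlocal operator $u\mapsto\phi_u u$ (Lemma \ref{thmE}), one passes to the limit in $I_a'(u_n)\to0$ to conclude that the weak limit $u$ is a (possibly trivial) critical point of $I_a$.

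The heart of the matter, and the main obstacle, is the loss of compactness on $\E$ caused by the translation invariance of $I_a$. I would establish a splitting lemma of Benci–Cerami/Struwe type by iterating a concentration argument: after extracting a subsequence there exist an integer $\ell\ge0$, nontrivial critical points $v_1,\dots,v_\ell$ of $I_a$, and points $y_n^k\in\R^3$ with $|y_n^k|\to\infty$ and $|y_n^j-y_n^k|\to\infty$ for $j\neq k$, such that
\[
\Big\|u_n-u-\sum_{k=1}^{\ell}v_k(\cdot-y_n^k)\Big\|_{\E}\to0,\qquad c=I_a(u)+\sum_{k=1}^{\ell}I_a(v_k).
\]
The delicate point is the behaviour of the nonlocal energy $V(\cdot,\cdot)$ under this decomposition. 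Because the Bopp–Podolsky kernel $\K_a$ has the Coulomb tail $\sim1/|x|$, one must show that the self-interaction of each profile is translation invariant while every cross-interaction, typically $V\big(v_j^2(\cdot-y_n^j),v_k^2(\cdot-y_n^k)\big)$ and $V\big(u^2,v_k^2(\cdot-y_n^k)\big)$, tends to $0$ as the centres diverge; this is where the mutual divergence $|y_n^j-y_n^k|\to\infty$ and the decay of $\K_a$ at infinity enter, via a dominated-convergence estimate controlled by Hardy–Littlewood–Sobolev. A Brezis–Lieb splitting for both $\|\cdot\|_p^p$ and $V(\cdot,\cdot)$, combined with the crucial fact that the limiting problem is $(\P_a)$ itself (the functional being autonomous, so each profile solves the \emph{same} equation), then yields the energy identity.

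Finally I would conclude by comparing energies. Every nontrivial critical point $v$ lies on $\mathcal N$, hence $I_a(v)\ge m$; in particular $I_a(v_k)\ge m>0$ for each $k$. If the weak limit $u$ is nontrivial, then it is a critical point with $I_a(u)\ge m$, while the energy identity forces $I_a(u)=c-\sum_k I_a(v_k)\le c=m$; thus $I_a(u)=m$, there are no bumps, and $u$ is the desired ground state. If $u=0$, then $c=m>0$ rules out strong convergence to $0$, so at least one profile is present, and $m=\sum_{k=1}^{\ell}I_a(v_k)\ge\ell\,m$ leaves exactly one profile $v_1$ with $I_a(v_1)=m$, which is then a ground state. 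The radial case $\E_r$ is handled along the same lines, with the splitting lemma simplified (or entirely bypassed) by the improved compactness of radial functions, which suppresses the translating bumps and returns $u_n\to u$ strongly. In either case $(u,\phi_u)$, with $u$ the attained minimizer at level $m$, solves $(\P_a)$ and realises the least energy among nontrivial solutions.
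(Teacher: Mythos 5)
Your proposal is correct and follows essentially the same route as the paper: mountain--pass geometry driven by the quartic lower bound on the positive part (where $p>4$ is used exactly as you indicate), boundedness of Palais--Smale sequences via $I_a(u_n)-\tfrac1p I_a'(u_n)[u_n]$, a Benci--Cerami/Struwe splitting of the (PS) sequence handling the nonlocal term through Brezis--Lieb--type identities and vanishing cross-interactions, and the final energy comparison. The only cosmetic difference is that you phrase the comparison step through the Nehari level $m$ and the identity $c=m$, whereas the paper proves directly (via the same fibering map $t\mapsto I_a(tu)$, whose unique maximum sits at $t=1$ for a critical point) that every nontrivial critical point has energy at least $c_a$; these are the same computation.
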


		\begin{rem}
The existence of a ground state solution in the case $p \in (3,4]$ remains an open problem due to the limitations of our approach. In particular,  the lack of homogeneity of $\K_a$ and the absence of the frequency term $\omega$ pose serious challenges to the construction of the mountain-pass structure and the arguments of \cite{IR12} do not work. 
		
	\end{rem}		
		
Our second type of results concerns the nonexistence of  nontrivial solutions for system \eqref{eq1.1}, which is established by using a Nehari-Pohozaev type argument.

\begin{thm}\label{Thm4}
Let $q\ne 0$. For any $p\ge 6$ or $p< 12/7$,  and $a>0$, system \eqref{eq1.1} admits no nontrivial solution in $\E\times\A$. 
\end{thm}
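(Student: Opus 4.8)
The plan is to extract two scaling identities from any nontrivial solution $(u,\phi_u)\in\E\times\A$ and then pit their weights against one another. The first is the Nehari identity $\langle I_a'(u),u\rangle=0$, i.e.
\[
\|\nabla u\|_2^2+q^2V(u^2,u^2)=\|u\|_p^p .
\]
The second is a Pohozaev identity, which I would derive by testing the first equation of \eqref{eq1.1} against the dilation field $x\cdot\nabla u$. The two local terms yield $-\tfrac12\|\nabla u\|_2^2$ and $-\tfrac3p\|u\|_p^p$, while the coupling term contributes $-\tfrac{3q^2}{2}\int\phi_u u^2-\tfrac{q^2}{2}\int(x\cdot\nabla\phi_u)u^2$. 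The only genuinely new object is $\int(x\cdot\nabla\phi_u)u^2$, and the crucial idea is to evaluate it through the second equation rather than directly: testing $-\Delta\phi_u+a^2\Delta^2\phi_u=4\pi u^2$ against $x\cdot\nabla\phi_u$ and invoking the first- and second-order Pohozaev identities for $-\Delta$ and $\Delta^2$ in $\RT$ gives
\[
8\pi\irt(x\cdot\nabla\phi_u)u^2=a^2\|\Delta\phi_u\|_2^2-\|\nabla\phi_u\|_2^2 .
\]

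Eliminating $\|u\|_p^p$ between the two identities and using the energy identity $\|\nabla\phi_u\|_2^2+a^2\|\Delta\phi_u\|_2^2=4\pi V(u^2,u^2)$ (obtained by testing the second equation against $\phi_u$ itself), I expect to reach a relation involving only the three manifestly nonnegative quantities $\|\nabla u\|_2^2$, $\|\nabla\phi_u\|_2^2$ and $a^2\|\Delta\phi_u\|_2^2$, namely
\[
\Big(\tfrac12-\tfrac3p\Big)\|\nabla u\|_2^2+\tfrac{q^2}{16\pi}\Big(5-\tfrac{12}{p}\Big)\|\nabla\phi_u\|_2^2+\tfrac{q^2}{16\pi}\Big(7-\tfrac{12}{p}\Big)a^2\|\Delta\phi_u\|_2^2=0 .
\]
The point of routing the nonlocal Pohozaev term through the $\phi$-equation is that it isolates $a^2\|\Delta\phi_u\|_2^2$ as an independent nonnegative term; this is what compensates for the homogeneity that $\K_a$ lacks, and it is precisely the appearance of this term, with the comparatively large coefficient $7-\tfrac{12}{p}$, that lowers the small-$p$ threshold from the Coulombic value $12/5$ down to $12/7$.

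The conclusion is then a sign count on the three coefficients, whose sign changes occur at $p=6$, $p=12/5$ and $p=12/7$ respectively. For $p\ge6$ all three are nonnegative and the last two are strictly positive, so the identity forces $\|\nabla\phi_u\|_2^2=a^2\|\Delta\phi_u\|_2^2=0$; hence $\phi_u\equiv0$, and the second equation then gives $u^2\equiv0$. For $p<12/7$ one has $\tfrac3p>\tfrac74$, so all three coefficients are strictly negative, and since the three quantities are nonnegative the identity again forces each to vanish, whence $u\equiv0$. In both regimes the solution must be trivial, which is the assertion; in the intermediate range $(12/7,6)$ the coefficients have mixed signs, in harmony with the existence results for $p\in(4,6)$.

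The main obstacle I foresee is not the algebra but the rigorous justification of the Pohozaev identity in the zero-mass $D^{1,2}$ setting, where the multiplier $x\cdot\nabla u$ is not admissible a priori and the usual $L^2$-control of $u$ is unavailable. I would address this by first upgrading the regularity and decay of $u$ and $\phi_u$ (elliptic regularity for the second- and fourth-order equations, together with the decay of $\phi_u=\K_a*u^2$ inherited from $\K_a$), and then performing the integrations by parts against the truncated field $(x\cdot\nabla u)\,\chi(\cdot/R)$, showing that all boundary and remainder terms — in particular the genuinely fourth-order contributions generated by $a^2\Delta^2\phi_u$ — vanish as $R\to\infty$. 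Establishing the finiteness and decay needed to annihilate these terms, rather than the final sign comparison, is where the real work lies.
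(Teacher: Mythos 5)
Your proposal is correct and follows essentially the same route as the paper: combine the two Nehari identities with the Pohozaev identity (itself obtained exactly as you describe, by testing the first equation against $x\cdot\nabla u$ and routing the nonlocal term through the $\phi$-equation tested against $x\cdot\nabla\phi_u$), and your final relation with coefficients $\tfrac12-\tfrac3p$, $5-\tfrac{12}{p}$, $7-\tfrac{12}{p}$ is precisely the paper's identity $0=\tfrac{6-p}{2p}\|\nabla u\|_2^2+\tfrac{q^2(12-5p)}{4p}\int\phi_u u^2-\tfrac{q^2a^2}{8\pi}\|\Delta\phi_u\|_2^2$ after expanding $\int\phi_u u^2$ via the second Nehari identity. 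The sign analysis at the thresholds $p=6$ and $p=12/7$ matches, and your caveat about rigorously justifying the Pohozaev identity in the $D^{1,2}$ setting corresponds to the paper's Lemma~\ref{reg} and Lemma~\ref{lem6}.
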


Observe that our result looks weaker compared with the corresponding one in the positive mass case studied in \cite{DS19},  where the nonexistence has been obtained for $p\ge 6$ and $p\le 2$. This is strictly related with the lack of suitable embedding properties of the space $\E$ and, as explained with more details in Remarks \ref{rem1} and \ref{rem2}, in order to recover the whole case $p\le 2$, we need to restrict ourself to the radial setting. More precisely we have the following.  

\begin{thm}\label{Thm4-bis}
Let $q\ne 0$ and $a>0$, system \eqref{eq1.1}  has no nontrivial solution in $\E_r \times \A_r$ for all $p \le 2$.
\end{thm}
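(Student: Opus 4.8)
The plan is to combine the \emph{Nehari} and \emph{Pohozaev} identities associated with \eqref{eq1.1}, and to absorb the extra term generated by the non-homogeneity of $\K_a$ by means of an elementary pointwise comparison of kernels. First I would record the Nehari identity: testing the first equation of \eqref{eq1.1} with $u$ gives
\[
\|\nabla u\|_2^2 + q^2\, V(u^2,u^2) = \|u\|_p^p,
\]
where $V(u^2,u^2)=\irt\irt \K_a(x-y)\,u^2(x)u^2(y)\dx\,\mathrm{d}y\ge 0$ since $\K_a>0$.

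Next I would derive the Pohozaev identity from the dilation $u_t(x):=u(x/t)$, imposing $\frac{\mathrm d}{\mathrm dt}I_a(u_t)\big|_{t=1}=0$. The only delicate term is the nonlocal one: writing $V_b$ for the bilinear form built from the kernel $\K_b$, the change of variables $x=t\xi$, $y=t\eta$ together with $\K_a(tz)=\tfrac1t\,\K_{a/t}(z)$ gives $V(u_t^2,u_t^2)=t^5V_{a/t}(u^2,u^2)$, and differentiating in $t$ (using $\partial_b\K_b(z)=-e^{-|z|/b}/b^2$) yields
\[
\frac{\mathrm d}{\mathrm dt}V(u_t^2,u_t^2)\Big|_{t=1}=5\,V(u^2,u^2)+\frac1a\,W(u),\qquad W(u):=\irt\irt e^{-|x-y|/a}u^2(x)u^2(y)\dx\,\mathrm{d}y .
\]
Together with the scalings $\|\nabla u_t\|_2^2=t\|\nabla u\|_2^2$ and $\|u_t\|_p^p=t^3\|u\|_p^p$ this produces
\[
\frac12\|\nabla u\|_2^2+\frac{5q^2}{4}\,V(u^2,u^2)+\frac{q^2}{4a}\,W(u)=\frac3p\,\|u\|_p^p .
\]
The term $\tfrac{q^2}{4a}W(u)$, absent in the homogeneous Schr\"odinger--Poisson case, is precisely the price of the non-homogeneity of $\K_a$.

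The decisive observation is a pointwise bound relating the two kernels. From $(s+1)e^{-s}\le 1$ for $s\ge 0$, i.e. $e^{-s}\le \frac{1-e^{-s}}{s}$, the choice $s=|x-y|/a$ gives $e^{-|x-y|/a}\le a\,\K_a(x-y)$, whence $W(u)\le a\, V(u^2,u^2)$. I would then eliminate $\|u\|_p^p$ between the two identities, substituting $\|u\|_p^p=\|\nabla u\|_2^2+q^2V(u^2,u^2)$ from Nehari into the Pohozaev identity and inserting the estimate $W(u)\le aV(u^2,u^2)$; after rearranging this leads to
\[
\Big(\frac3p-\frac12\Big)\|\nabla u\|_2^2+\Big(\frac3p-\frac32\Big)q^2\,V(u^2,u^2)\le 0 .
\]
For $p\le 2$ one has $\frac3p\ge\frac32$, so the coefficient of $V(u^2,u^2)$ is $\ge 0$ while the coefficient of $\|\nabla u\|_2^2$ is $\ge 1>0$; since both quadratic quantities are nonnegative, this forces $\|\nabla u\|_2=0$. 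Hence $u$ is constant, and the only constant in $D^{1,2}(\R^3)$ is $0$, contradicting nontriviality.

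The main obstacle is the rigorous justification of the Pohozaev identity in the \emph{zero mass} regime, and this is exactly where the radial restriction enters. Because $\E$ embeds only into $L^6(\R^3)$, for small $p$ one cannot guarantee a priori that $\|u\|_p^p$ and $W(u)$ are finite, nor that the boundary terms in the dilation (equivalently, in the test function $x\cdot\nabla u$) vanish. For $u\in\E_r$, however, the Strauss-type radial estimate $|u(x)|\le C|x|^{-1/2}\|\nabla u\|_2$ supplies the pointwise decay and the integrability needed to make both identities meaningful and to pass to the limit in a truncation of $x\cdot\nabla u$; this is what allows the full range $p\le 2$ to be reached, in contrast with the non-radial Theorem \ref{Thm4}, which is confined to $p<12/7$. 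A further technical point, to be handled by the same truncation and radial decay, is the low regularity of the nonlinearity $|u|^{p-2}u$ (singular at $u=0$) when $p<2$.
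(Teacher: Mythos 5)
Your proposal is correct and follows essentially the same route as the paper: both combine the Nehari identity with the radial Pohozaev identity (which in $\E_r$ carries the extra term $\frac{q^2}{4a}\irt\irt e^{-|x-y|/a}u^2(x)u^2(y)\dx\,\mathrm{d}y$) and conclude via the elementary pointwise inequality $(1+s)e^{-s}\le 1$, i.e.\ $e^{-|x-y|/a}\le a\,\K_a(x-y)$. The only difference is the direction of elimination (you substitute Nehari into Pohozaev, the paper does the reverse) and your correct emphasis that the radiality is needed precisely to justify the Pohozaev identity through the formula for $\|\Delta\phi_u\|_2^2$.
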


In the radial setting, moreover, due to the additional informations about the embedding properties of $\E_r$, something more can be obtained. In particular, if $u$ is a ground state solution with energy $c_a$, then as shown in Corollary~\ref{lem9}, it admits the following minimax characterization:
			$$0<c_a=I_a(u)=\inf_{v\in\M_{a,r}}I_a(v)=\inf_{v\in \E_r\setminus\{0\} }\max_{t>0}I_a(t^2v(tx)).$$
		Here, $\M_{a,r}$ denotes a Nehari-Pohozaev type set, see \eqref{M} for a precise definition.

Based on this discovery, now
we can analyse the limiting behaviour of ground state solutions to system \eqref{eq1.1} as $a\to0$, thereby establishing a connection with system \eqref{SP}.

\begin{thm}\label{Thm1}
	Assume $q\ne0$, $p\in (4,6)$ and $a>0$. Let $(u_a,\phi_a)\in \E_r\times\mathcal{A}_r$ be a ground state solution to system \eqref{eq1.1}. Then, as $a\to 0$, one has
	$$u_a\weakto u_0 \text{ weakly in } D_r^{1,2}(\RT) ~~\text{ and }~~ \phi_a\weakto \phi_0 \text{ weakly in } D_r^{1,2}(\RT),$$
	where $(u_0,\phi_0)\in D_r^{1,2}(\RT)\times D_r^{1,2}(\RT)$
	solves system \eqref{SP} in the following weak sense:
	 $$\int_{\RT}\nabla\phi_0 \cdot \nabla\eta dx=4\pi\int_{\RT} u_0^2\eta dx,~~ \forall \eta\in C_{c}^\infty(\RT),$$
	 and
	 $$\int_{\RT}\nabla u_0 \cdot\nabla \varphi dx+q^2 \int_{\RT}\phi_{0}u_0\varphi dx=\int_{\RT}|u_0|^{p-2}u_0\varphi dx,~~ \forall \varphi\in C_{c}^\infty(\RT).$$
	
\end{thm}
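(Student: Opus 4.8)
The plan is to first extract uniform (in $a$) bounds on the pair $(u_a,\phi_a)$ and then pass to the weak limit in the two weak equations. The key input for the bounds is the minimax characterization from Corollary~\ref{lem9}, $c_a=\inf_{v\in\E_r\setminus\{0\}}\max_{t>0}I_a(t^2v(tx))$, together with the elementary pointwise inequality $\K_a\le\mathcal{C}$ coming from $1-e^{-|x|/a}\le1$. Fixing once and for all a radial $v_0\in C_c^\infty(\RT)\setminus\{0\}\subset\E_r$, this inequality yields $I_a(w)\le I_0(w)$ for every $w$, hence $\max_{t>0}I_a(t^2v_0(tx))\le\max_{t>0}I_0(t^2v_0(tx))=:C_0$. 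Since $p\in(4,6)$, the function $t\mapsto I_0(t^2v_0(tx))$ attains a finite positive maximum (the scaled nonlinear term dominates at infinity and everything vanishes at $0$), so $C_0<\infty$ is independent of $a$ and therefore $c_a\le C_0$ uniformly.

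Next I would combine the energy identity with the Nehari identity $\|\n u_a\|_2^2+q^2V(u_a^2,u_a^2)=\|u_a\|_p^p$, which is exactly $I_a'(u_a)[u_a]=0$. Substituting $\|u_a\|_p^p$ into $I_a(u_a)$ gives
\[
c_a=\Big(\frac12-\frac1p\Big)\|\n u_a\|_2^2+q^2\Big(\frac14-\frac1p\Big)V(u_a^2,u_a^2),
\]
and for $p\in(4,6)$ both coefficients are strictly positive; together with $c_a\le C_0$ this produces the uniform bounds $\|\n u_a\|_2^2\le C$ and $V(u_a^2,u_a^2)\le C$. Since $\phi_a=\K_a*u_a^2$ solves the field equation, testing it with $\phi_a$ gives $\|\phi_a\|_\A^2=\|\n\phi_a\|_2^2+a^2\|\Delta\phi_a\|_2^2=4\pi V(u_a^2,u_a^2)\le C$, so both $u_a$ and $\phi_a$ are bounded in $D_r^{1,2}(\RT)$. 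Along a subsequence, $u_a\weakto u_0$ and $\phi_a\weakto\phi_0$ weakly there, and by local Rellich compactness $u_a\to u_0$ and $\phi_a\to\phi_0$ strongly in $L^q_{\mathrm{loc}}$ for every $q\in[1,6)$.

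It then remains to pass to the limit in the weak formulations against $\varphi,\eta\in C_c^\infty(\RT)$. For the field equation, the gradient term converges by weak $D^{1,2}$ convergence and the source term by $\irt u_a^2\eta\,dx\to\irt u_0^2\eta\,dx$ (strong $L^2_{\mathrm{loc}}$ convergence of $u_a$); the biharmonic term vanishes because $\big|a^2\irt\Delta\phi_a\Delta\eta\,dx\big|\le a\,(a\|\Delta\phi_a\|_2)\,\|\Delta\eta\|_2\le a\sqrt{C}\,\|\Delta\eta\|_2\to0$, which yields $\irt\n\phi_0\cdot\n\eta\,dx=4\pi\irt u_0^2\eta\,dx$. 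For the Schr\"odinger equation, the gradient and nonlinear terms pass to the limit by weak convergence and by $u_a\to u_0$ in $L^p_{\mathrm{loc}}$ (so $|u_a|^{p-2}u_a\to|u_0|^{p-2}u_0$ in $L^{p/(p-1)}_{\mathrm{loc}}$), while the coupling term converges since, writing $\phi_au_a-\phi_0u_0=(\phi_a-\phi_0)u_a+\phi_0(u_a-u_0)$, both pieces tend to $0$ in $L^1_{\mathrm{loc}}$ by the strong $L^2_{\mathrm{loc}}$ convergence of each factor; this gives the second weak identity.

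The main obstacle I anticipate is organizing the nonlocal and coupling structure rather than any single estimate: one must ensure that the uniform bound on the Bopp--Podolsky energy $V(u_a^2,u_a^2)$ (not merely on $\|\n\phi_a\|_2$) is what simultaneously controls the boundedness of $\phi_a$ in $D^{1,2}$ and forces the biharmonic term to vanish as $a\to0$, and that the splitting of $\phi_au_a$ is legitimate only after invoking local compactness. I do not claim $u_0\ne0$ in the statement above; proving nontriviality of the limit would be the genuinely delicate additional step, requiring a uniform lower bound on $c_a$ and a radial concentration-compactness argument to exclude vanishing, and it is not needed for the weak-solution conclusion asserted here.
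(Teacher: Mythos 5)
Your proposal is correct and follows the same overall route as the paper: uniform bound on $c_a$ via the minimax characterization of Corollary~\ref{lem9} and the pointwise inequality $\K_a\le \mathcal{C}$, coercivity to get $\|\nabla u_a\|_2^2+V(u_a^2,u_a^2)\le C$, the identity $\|\phi_a\|_{\A}^2=4\pi V(u_a^2,u_a^2)$ to bound $\phi_a$ in $D_r^{1,2}(\RT)$, and then passage to the limit in both weak formulations with the biharmonic term killed by $a\|a\Delta\phi_a\|_2\|\Delta\eta\|_2\to0$ and the coupling term handled by the splitting $(\phi_a-\phi_0)u_a+\phi_0(u_a-u_0)$. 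The two points where you deviate are both legitimate simplifications. First, for the upper bound on $c_a$ you test the minimax with a fixed $v_0\in C_c^\infty(\RT)\setminus\{0\}$ and use $\max_{t>0}I_0(t^2v_0(tx))<+\infty$ (valid since $2p-3>3$), whereas the paper takes $v_0$ to be the ground state of \eqref{SP} from \cite{IR12}, which yields the sharper and more informative bound $c_a\le c_0$; your version avoids invoking that existence result but loses the comparison with the limit problem's ground state level. Second, for coercivity you use only the Nehari identity $I_a'(u_a)[u_a]=0$, giving $c_a=(\tfrac12-\tfrac1p)\|\nabla u_a\|_2^2+q^2(\tfrac14-\tfrac1p)V(u_a^2,u_a^2)$ with both coefficients positive for $p\in(4,6)$, while the paper combines $I_a$ with the Nehari--Pohozaev functional $J_a$ (identity \eqref{eq3.26}); for the purpose of this theorem your identity suffices, since the extra term controlled by \eqref{eq3.26} is not used afterwards. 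Your closing remark is also accurate: the theorem as stated does not assert $u_0\ne0$, and the paper does not prove nontriviality of the limit either.
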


\begin{rem}
Actually if  $(u_a,\phi_a)\in \E_r\times\mathcal{A}_r$ is a nontrivial weak solution of \eqref{eq1.1}, with $a>0$, and such that the family $\{I_a(u_a)\}_{a>0}$ is bounded, 
we get the same result with similar arguments.
\end{rem}

This paper is organized as follows. 
In Section \ref{se2}, we recall the functional setting introduced in \cite{CDPSY25} and we provide and establish some preliminary results that will be useful in the sequel.
Section \ref{se3} is devoted to proving the existence of ground state solutions via the Mountain Pass Theorem, where the compactness can be recovered with the help of a splitting lemma.
In Section \ref{se4}, we focus on the regularity of nontrivial weak solutions and establish some nonexistence results for system \eqref{eq1.1} by means of Pohozaev identity. 
Finally, in Section \ref{se5}, we study the asymptotic behaviour of the radial ground state solutions we obtain as $a\to 0$.
	
We conclude with a list of notations:
	\begin{itemize}
		\item $\|\cdot\|_q$ denotes the usual $L^q(\R^3)$ norm for $1\le q\le +\infty$;
        \item $o_n(1)$ indicates a vanishing sequence as $n\to+\infty$;
		\item the subscript $r$ denotes the respective subspace of radial functions;
		\item $C$ is used to represent suitable positive constants whose value may be distinct from line to line.
	\end{itemize}
	
	In what follows, unless otherwise indicated, we shall always assume that $q \neq 0$ and $a>0$.

\section{Preliminaries}\label{se2}
This section aims at introducing the functional setting and presenting some preliminary results. We start by 
stating the characterization of the functional spaces $\E$ and $\E_r$ along with some of their useful properties.
\begin{lem}(\cite[Theorem 1.1]{CDPSY25})\label{thmE}
	We have
	\[
	\E=\{u\in D^{1,2}(\RT):\phi_u\in \A\},
	\]
	where
	\begin{equation*}\label{eq2.3}
		\phi_u(x):=(\K_a*u^2 )(x)=\int_{\R^3}\frac{1-e^{-\frac{|x-y|}{a}}}{|x-y|}u^2(y)dy.
	\end{equation*}
	Moreover, for each $u\in \E$,  $\phi_u$ is the unique weak solution in $\mathcal A$
	of $-\Delta \phi_u+a^2\Delta^2\phi_u=4\pi u^2$ in $\RT$,
	and
	\[
	\|\phi_u\|^2_\mathcal A = 	\int_{\mathbb{R}^3} (|\nabla \phi_u|^2 + a^2 |\Delta \phi_u|^2) dx = 4\pi\int_{\R^3} \phi_u u^2 dx
	=4\pi V(u^2,u^2).
	\]
\end{lem}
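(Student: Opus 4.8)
The plan is to prove the two implications in the characterization separately, the substantive one being $\E\subseteq\{u\in D^{1,2}(\RT):\phi_u\in\A\}$ together with the energy identity; throughout I would lean on two structural facts. First, as recorded in the Introduction, $\K_a$ is (up to the factor $4\pi$) the fundamental solution of $-\Delta+a^2\Delta^2$, i.e.\ $(-\Delta+a^2\Delta^2)\K_a=4\pi\delta_0$ in $\D'(\RT)$; since $u\in D^{1,2}(\RT)$ gives $u^2\in L^3\subseteq L^1_{\mathrm{loc}}$, it follows that $\phi_u=\K_a*u^2$ solves $-\Delta\phi_u+a^2\Delta^2\phi_u=4\pi u^2$ in $\D'(\RT)$. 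Second, the symbol $|\xi|^2+a^2|\xi|^4$ of this operator is positive, so $\widehat{\K_a}\ge0$ and hence $V$ is a symmetric, positive semidefinite bilinear form; in particular the Cauchy--Schwarz inequality $|V(f,g)|\le V(f,f)^{1/2}V(g,g)^{1/2}$ is available.

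For the forward implication, fix $u\in\E$, so that $V(u^2,u^2)<+\infty$, and consider $T_u(\psi):=4\pi\irt u^2\psi\dx$, which is well defined for $\psi\in C_c^\infty(\RT)$. The obstacle is that $u\in D^{1,2}(\RT)$ only yields $u\in L^6$ and $u^2\in L^3$, which is too weak for the naive bound $|\irt u^2\psi\dx|\le\|u^2\|_{6/5}\|\psi\|_6$; I would instead extract boundedness from the energy itself. For $\psi\in C_c^\infty(\RT)$ put $\rho_\psi:=\tfrac1{4\pi}(-\Delta+a^2\Delta^2)\psi\in C_c^\infty(\RT)$, so that the fundamental-solution property gives $\psi=\K_a*\rho_\psi$. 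The finiteness of $V(u^2,u^2)$ ensures absolute convergence of the corresponding double integral (apply Cauchy--Schwarz to $|\rho_\psi|$), whence $\irt u^2\psi\dx=V(u^2,\rho_\psi)$ by Fubini, while $V(\rho_\psi,\rho_\psi)=\irt\rho_\psi\,\psi\dx=\tfrac1{4\pi}\|\psi\|_\A^2$. Cauchy--Schwarz then yields $|T_u(\psi)|\le\sqrt{4\pi}\,V(u^2,u^2)^{1/2}\|\psi\|_\A$, so $T_u$ extends to a bounded functional on $\A$ by density of $C_c^\infty(\RT)$. The Riesz representation theorem provides a unique $\Phi\in\A$ with $\langle\Phi,\psi\rangle_\A=4\pi\irt u^2\psi\dx$ for all $\psi\in\A$, which is exactly the unique weak solution in $\A$; testing with $\psi=\Phi$ gives $\|\Phi\|_\A^2=4\pi\irt u^2\Phi\dx$. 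Finally $\Phi=\phi_u$, since both solve the same equation in $\D'(\RT)$ (alternatively, approximating $u$ by truncations $u_n$ for which the two notions coincide and passing to the limit); together with $\irt u^2\phi_u\dx=V(u^2,u^2)$ (Tonelli, as $\K_a,u^2\ge0$) this gives $\phi_u\in\A$ and $\|\phi_u\|_\A^2=4\pi V(u^2,u^2)$.

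For the converse, let $u\in D^{1,2}(\RT)$ with $\phi_u\in\A$ and put $u_n:=u\,\chi_{\{|u|\le n\}\cap B_n}$, so that $u_n^2\uparrow u^2$ and $\phi_{u_n}=\K_a*u_n^2\uparrow\phi_u$. Each $u_n$ is bounded with compact support, hence $u_n^2\in L^{6/5}$ and the elementary duality bound applies, giving $\phi_{u_n}\in\A$ with $\langle\phi_{u_n},\psi\rangle_\A=4\pi\irt u_n^2\psi\dx$ for all $\psi\in\A$. Testing with $\psi=\phi_u$ and using $\phi_{u_n}\le\phi_u$ yields $\|\phi_{u_n}\|_\A^2=4\pi\irt u_n^2\phi_{u_n}\dx\le4\pi\irt u_n^2\phi_u\dx=\langle\phi_{u_n},\phi_u\rangle_\A\le\|\phi_{u_n}\|_\A\|\phi_u\|_\A$, so $\|\phi_{u_n}\|_\A\le\|\phi_u\|_\A$. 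By monotone convergence $\irt u_n^2\phi_{u_n}\dx\uparrow\irt u^2\phi_u\dx=V(u^2,u^2)$, whence $4\pi V(u^2,u^2)\le\|\phi_u\|_\A^2<+\infty$ and $u\in\E$; equality in the energy identity is then supplied by the forward step.

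The point at which the argument is genuinely delicate is the forward implication: working in the zero-mass space $D^{1,2}(\RT)$ rather than $H^1(\RT)$ removes the embedding that would place $u^2$ in $L^{6/5}$, so the boundedness of $T_u$ cannot be read off from H\"older's inequality and must be recovered from the finiteness of the Bopp--Podolsky energy through the positivity of $V$. Once this is secured, the Riesz representation simultaneously delivers the membership $\phi_u\in\A$, the existence and uniqueness of the weak solution, and the energy identity.
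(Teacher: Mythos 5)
This lemma is quoted from \cite[Theorem 1.1]{CDPSY25} and the paper gives no proof of it, so there is nothing internal to compare your argument against; I assess it on its own terms. The overall architecture --- obtaining boundedness of $\psi\mapsto 4\pi\int_{\RT}u^2\psi\,dx$ on $\A$ from the identity $\psi=\K_a*\rho_\psi$ together with the Cauchy--Schwarz inequality for $V$, then Riesz representation, then truncation for the converse --- is the right one and in outline it works. However, two steps are stated in a form that is false or unproved.

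(1) Your ``structural fact'' that $\phi_u=\K_a*u^2$ solves $-\Delta\phi_u+a^2\Delta^2\phi_u=4\pi u^2$ in $\D'(\RT)$ for \emph{every} $u\in D^{1,2}(\RT)$, merely because $u^2\in L^1_{\rm loc}$, is wrong: $\K_a$ decays only like $1/|x|$, and for $u^2\in L^3(\RT)$ the tail $\int_{|y|>1}|x-y|^{-1}u^2(y)\,dy$ can diverge for every $x$ (take $u^2(y)\sim |y|^{-2}(\log|y|)^{-1}$ at infinity), so $\phi_u$ may be identically $+\infty$ and is then not a distribution at all --- this is precisely why $\E$ is a proper subspace of $D^{1,2}(\RT)$. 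Before you may say that $\Phi$ and $\phi_u$ ``solve the same equation in $\D'$'' you must first show that $V(u^2,u^2)<+\infty$ forces $\phi_u\in L^1_{\rm loc}(\RT)$, and even then the Liouville step needs $\Phi-\phi_u$ to be tempered. The truncation argument you relegate to a parenthesis is the clean way to identify $\Phi$ with $\phi_u$ and should be the main route, not the alternative. (2) The assertion $\langle\Phi,\psi\rangle_{\A}=4\pi\int_{\RT}u^2\psi\,dx$ for \emph{all} $\psi\in\A$ is not meaningful as written: $\A$ embeds into $L^\tau$ only for $\tau\ge 6$ while $u^2\in L^3$, so for a general $\psi\in\A$ the right-hand side need not converge absolutely, and the density extension of $T_u$ is a priori only an abstract continuous extension. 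Consequently ``testing with $\psi=\Phi$'' to obtain $\|\Phi\|_{\A}^2=4\pi\int_{\RT}u^2\Phi\,dx$ is not a one-line step; the identity is true because $\Phi=\phi_u\ge 0$ and $\int_{\RT}u^2\phi_u\,dx=V(u^2,u^2)$ by Tonelli, but proving it again goes through the truncations: $\langle\phi_{u_n},\phi_{u_m}\rangle_{\A}=4\pi\int_{\RT}u_n^2\phi_{u_m}\,dx$ is legitimate since $u_n^2\in L^{6/5}$, and two passages to the limit by monotone convergence yield the energy identity. With these two repairs the forward implication closes; your converse direction is correct as written.
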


\begin{lem}(\cite[Proposition 4.1]{CDPSY25})\label{PropA.2}
	The space $(\E,\|\cdot\|_{\E})$ is a uniformly convex Banach space.
\end{lem}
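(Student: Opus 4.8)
The plan is to read the norm as $\|u\|_\E^2=\|\nabla u\|_2^2+\rho(u)^2$, where I set $\rho(u):=V(u^2,u^2)^{1/4}$ (so that $\rho(u)^2=V(u^2,u^2)^{1/2}$), and to treat the two summands separately. The Dirichlet part $u\mapsto\|\nabla u\|_2$ is the norm of the Hilbert space $D^{1,2}(\R^3)$, hence already uniformly convex and subject to the parallelogram law, while the nonlocal part $\rho$ has to be shown to be a uniformly convex norm. The statement will then follow by regarding $\E$ as the isometric image, under the diagonal map $u\mapsto(u,u)$, of a subspace of the $\ell^2$-product $D^{1,2}(\R^3)\times(X_\rho,\rho)$ with $X_\rho:=\{u:\rho(u)<\infty\}$, using the classical fact (Day) that the $\ell^2$-combination of two uniformly convex norms is uniformly convex and that uniform convexity is inherited by subspaces; completeness I would establish directly.

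First I would record the structural properties of $V$. Writing $\widehat{\K_a}(\xi)=4\pi/(|\xi|^2(1+a^2|\xi|^2))\ge 0$, Bochner's theorem together with Plancherel shows that $V(f,g):=\irt\irt \K_a(x-y)f(x)g(y)\dx\,\mathrm{d}y$ is a symmetric positive-definite bilinear form on the functions of finite $V$-energy. This yields the three facts that drive everything: the Cauchy--Schwarz inequality $V(f,g)\le V(f,f)^{1/2}V(g,g)^{1/2}$; monotonicity, i.e.\ $0\le f\le g\Rightarrow V(f,f)\le V(g,g)$, since $\K_a\ge 0$; and positivity of cross terms, $V(P,Q)\ge 0$ whenever $P,Q\ge 0$.

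Next I would prove that $\rho$ is a norm. Homogeneity is clear and definiteness follows from positive-definiteness of $V$, so the point is the triangle inequality. By monotonicity it suffices to bound $\rho(|u|+|v|)$; expanding $V((|u|+|v|)^2,(|u|+|v|)^2)$ into its six terms and estimating the mixed terms by Cauchy--Schwarz together with the pointwise bound $|u(x)v(x)u(y)v(y)|\le \tfrac12(u^2(x)v^2(y)+v^2(x)u^2(y))$, one checks that the resulting upper bound collapses exactly to $(\rho(u)+\rho(v))^4$; taking fourth roots gives $\rho(u+v)\le\rho(|u|+|v|)\le\rho(u)+\rho(v)$. For uniform convexity of $\rho$, I would start from the pointwise identity $(\tfrac{u+v}{2})^2+(\tfrac{u-v}{2})^2=\tfrac{u^2+v^2}{2}$: setting $P=(\tfrac{u+v}{2})^2$ and $Q=(\tfrac{u-v}{2})^2$ and using $V(P,Q)\ge 0$ with monotonicity gives the Clarkson-type inequality $\rho(\tfrac{u+v}{2})^4+\rho(\tfrac{u-v}{2})^4\le \tfrac14(\rho(u)^2+\rho(v)^2)^2$, from which the modulus of convexity of $\rho$ is immediate.

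Finally I would assemble the pieces. For completeness of $(\E,\|\cdot\|_\E)$, a Cauchy sequence is Cauchy in $D^{1,2}$, hence converges there and a.e.\ along a subsequence, and Fatou's lemma applied to the positive kernel shows both that the limit lies in $\E$ and that $\rho(u_n-u)\to 0$, whence $\|u_n-u\|_\E\to 0$. For uniform convexity, since $\|\cdot\|_\E$ is the $\ell^2$-combination of the Hilbert norm $\|\nabla\cdot\|_2$ and the uniformly convex norm $\rho$, the diagonal embedding realizes $\E$ as a subspace of a uniformly convex product space, so $\|\cdot\|_\E$ is uniformly convex. I expect the main obstacle to be the triangle inequality for $\rho$ --- the exact cancellation that makes the six-term bound equal $(\rho(u)+\rho(v))^4$ is the delicate computation --- together with correctly combining the degree-$2$ parallelogram law for the Dirichlet part and the degree-$4$ Clarkson inequality for $\rho$, which is exactly what the $\ell^2$-product formulation is designed to reconcile.
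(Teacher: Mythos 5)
Your proposal is correct, but note that for this lemma the paper contains no proof at all: the statement is quoted verbatim from \cite[Proposition 4.1]{CDPSY25}, so the only in-paper ``proof'' is the citation, and your argument should be judged as a self-contained reconstruction. As such it is sound, and it follows the route that is standard for Coulomb--Sobolev-type spaces (cf.\ \cite{MMV16}). I checked the two computations you flag as delicate. Writing $A=u^2$, $B=v^2$, $C=2|u||v|$, the six-term expansion of $V(A+B+C,A+B+C)$, with $V(C,C)\le 4V(u^2,v^2)$ via the pointwise symmetrization $|u(x)v(x)u(y)v(y)|\le\frac12\bigl(u^2(x)v^2(y)+v^2(x)u^2(y)\bigr)$ and all cross terms controlled by Cauchy--Schwarz, does collapse to $\rho(u)^4+4\rho(u)^3\rho(v)+6\rho(u)^2\rho(v)^2+4\rho(u)\rho(v)^3+\rho(v)^4=(\rho(u)+\rho(v))^4$, giving the triangle inequality; and the identity $\bigl(\frac{u+v}{2}\bigr)^2+\bigl(\frac{u-v}{2}\bigr)^2=\frac{u^2+v^2}{2}$ together with $V(P,Q)\ge 0$ and one more Cauchy--Schwarz yields exactly the quartic Clarkson inequality $\rho\bigl(\frac{u+v}{2}\bigr)^4+\rho\bigl(\frac{u-v}{2}\bigr)^4\le\frac14\bigl(\rho(u)^2+\rho(v)^2\bigr)^2$, from which a positive modulus of convexity for $\rho$ follows. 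Two simplifications are available: the Cauchy--Schwarz inequality for $V$ is already stated in the paper as Lemma \ref{prop:main}, so the Bochner/Plancherel discussion (which would otherwise require some care about the class of $f,g$ for which $V(f,g)=\int\widehat{\K_a}\,\hat f\,\overline{\hat g}$ is legitimate) can be bypassed entirely; and definiteness of $\rho$ follows simply from $\K_a>0$ off the diagonal, without invoking positive-definiteness of the kernel. Your assembly step is also handled correctly: Day's theorem on the $\ell^2$-sum is a statement about norms only, uniform convexity passes to arbitrary (not necessarily closed) subspaces, so completeness of $(X_\rho,\rho)$ is never needed, and your separate Fatou argument legitimately supplies completeness of $(\E,\|\cdot\|_\E)$ --- including the point, worth stating explicitly, that $\rho(u_n-u_m)\le\|u_n-u_m\|_\E$ makes the sequence $\rho$-Cauchy before Fatou is applied along the a.e.\ convergent subsequence.
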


\begin{lem}(\cite[Proposition 4.2]{CDPSY25})\label{density}
	The space  $C_c^{\infty}(\R^3)$ is dense in $(\E,\|\cdot\|_\E)$.
\end{lem}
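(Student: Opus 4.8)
The plan is to obtain $u$ as an $\E$-limit of $C_c^\infty(\RT)$ functions through three successive approximations: truncation in amplitude, truncation in space, and mollification. At each step I must control simultaneously the two pieces of $\|\cdot\|_\E$, namely the Dirichlet seminorm $\|\n\,\cdot\|_2$ and the quartic nonlocal term $V((\cdot)^2,(\cdot)^2)$. The recurring tool for the nonlocal part is that the kernel $\K_a$ is nonnegative together with the pointwise bound $\K_a(x)=(1-e^{-|x|/a})/|x|\le 1/|x|$. The first allows dominated convergence to be applied directly to the double integral, while the second permits the Hardy--Littlewood--Sobolev (HLS) inequality $V(w^2,w^2)\le\irt\irt \frac{w^2(x)w^2(y)}{|x-y|}\,dx\,dy\le C\|w\|_{12/5}^4$ whenever $w\in L^{12/5}(\RT)$.

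For the amplitude and spatial truncations the key observation is domination. Let $u\in\E$. For the amplitude step set $u_k:=T_k u=\max(-k,\min(u,k))$; then $w_k:=u-u_k$ satisfies $|w_k|\le|u|$ with $w_k\to 0$ and $|\n w_k|=|\n u|\,\mathbf 1_{\{|u|>k\}}\to 0$ pointwise a.e. Dominated convergence (against $|\n u|^2\in L^1$) gives $\n u_k\to\n u$ in $L^2$, and since the integrand $\K_a(x-y)\,w_k^2(x)\,w_k^2(y)$ is dominated by $\K_a(x-y)\,u^2(x)\,u^2(y)$ (integrable because $V(u^2,u^2)<+\infty$) and tends to $0$ a.e., we also get $V(w_k^2,w_k^2)\to 0$; hence $u_k\to u$ in $\E$. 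For the spatial step apply, to a now bounded $u$, a smooth cut-off $\chi_R(\cdot)=\chi(\cdot/R)$ equal to $1$ on $B_R$ and $0$ outside $B_{2R}$, and set $u_R:=\chi_R u$. The identity $\n u_R=\chi_R\n u+u\,\n\chi_R$ handles the gradient: the first term converges by dominated convergence, while $\|u\,\n\chi_R\|_2^2\le \tfrac{C}{R^2}\int_{B_{2R}\setminus B_R}u^2\le C\|u\|_{L^6(B_{2R}\setminus B_R)}^2\to 0$ using $|B_{2R}|^{2/3}\sim R^2$ and $u\in L^6(\RT)$. The nonlocal part is again controlled by the same domination argument, since $(u-u_R)^2=(1-\chi_R)^2u^2\le u^2$ and tends to $0$ pointwise.

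After these reductions it remains to treat a function $u$ that is bounded with compact support, say contained in $B_{R_0}$; here I mollify, $u_\eps:=\rho_\eps* u\in C_c^\infty(\RT)$, with supports in a fixed compact set $K$ and $\|u_\eps\|_\infty\le\|u\|_\infty$ for small $\eps$. The gradient convergence $\n u_\eps=\rho_\eps*\n u\to\n u$ in $L^2$ is standard. For the nonlocal term I use HLS: since $u\in L^{12/5}(\RT)$, mollification gives $v_\eps:=u-u_\eps\to 0$ in $L^{12/5}(\RT)$, and therefore $V(v_\eps^2,v_\eps^2)\le C\|v_\eps\|_{12/5}^4\to 0$. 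Combining the three steps yields a $C_c^\infty(\RT)$ sequence converging to the original $u$ in $\E$, proving density.

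I expect the mollification step to be the main obstacle, because unlike the two truncations there is no single pointwise majorant for $v_\eps^2$ that makes dominated convergence on the double integral immediate; this is precisely why the reduction to \emph{bounded, compactly supported} functions is essential, as it places $v_\eps$ in $L^{12/5}(\RT)$ and unlocks the HLS estimate. A secondary technical point to verify carefully is that each approximant genuinely lies in $\E$ (so that the triangle inequality in the uniformly convex Banach space of Lemma~\ref{PropA.2} applies): this holds because boundedness and compact support give $w^2\in L^1\cap L^\infty$ and hence, via $\K_a\le 1/|\cdot|$ and HLS, finite Bopp--Podolsky energy.
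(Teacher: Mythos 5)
The paper does not prove this lemma: it is imported verbatim from \cite[Proposition 4.2]{CDPSY25}, so there is no internal proof to compare against, and your argument must be judged on its own merits. On those merits it is correct. The three-step scheme (amplitude truncation, spatial cut-off, mollification) is the natural one for this kind of statement, and you have handled the two genuinely delicate points properly: first, for the truncation steps the errors $w$ satisfy $|w|\le |u|$ pointwise, so $\K_a(x-y)\,w^2(x)w^2(y)\le \K_a(x-y)\,u^2(x)u^2(y)\in L^1(\R^6)$ and dominated convergence applies directly to the quartic term, with the gradient errors controlled by dominated convergence and the standard annulus estimate $\|u\,\n\chi_R\|_2\le C\|u\|_{L^6(B_{2R}\setminus B_R)}$, which is exactly the right tool in $D^{1,2}$ where no $L^2$ information is available; second, you correctly identified that mollification admits no pointwise majorant, and that the prior reduction to compactly supported functions places the error $v_\eps$ in $L^{12/5}(\RT)$ (compact support plus $u\in L^6$ already suffices, so your boundedness hypothesis is harmless but not needed there), after which $\K_a\le 1/|\cdot|$ and Hardy--Littlewood--Sobolev give $V(v_\eps^2,v_\eps^2)\le C\|v_\eps\|_{12/5}^4\to 0$. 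Your final remark is also the right one to make explicit: gluing the three steps by a diagonal argument requires the triangle inequality for $\|\cdot\|_\E$, which is legitimate since Lemma \ref{PropA.2} asserts that $(\E,\|\cdot\|_\E)$ is a Banach space (equivalently, the Cauchy--Schwarz inequality of Lemma \ref{prop:main} makes $u\mapsto V(u^2,u^2)^{1/4}$ subadditive), and each intermediate approximant lies in $\E$ by the same domination and HLS bounds. I find no gap; this is a complete and self-contained proof of the cited density result.
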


\begin{lem}(\cite[Lemma 4.3]{CDPSY25})\label{lem3.4}
	Let $\{u_n\}$ be a sequence in $\E$. We have that
	\begin{enumerate}[label=(\roman*),ref=\roman*]
		\item \label{weakconv} if $u_n\rightharpoonup u \text{ weakly in } \E$, then
		$\phi_{u_n}\rightharpoonup \phi_{u}$  weakly in $\mathcal{A}$;
		\item \label{stronvconv}  $u_n\to u$ in $\E$ if and only if
		$u_n\to u$ in $D^{1,2}(\RT)$
		and $\phi_{u_n}\to \phi_{u}$ in $\A$.
	\end{enumerate}
\end{lem}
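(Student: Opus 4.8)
The plan is to phrase everything in terms of the bilinear form $V$ and the energy identity $\|\phi_u\|_\A^2=4\pi V(u^2,u^2)$ from Lemma~\ref{thmE}. Writing $\Phi[f]:=\K_a*f$, this identity exhibits $\phi_u=\Phi[u^2]$ as the image of the \emph{linear} map $f\mapsto\Phi[f]$ and shows that $V(f,g)=\tfrac1{4\pi}\langle\Phi[f],\Phi[g]\rangle_\A$ is an inner product on densities. Two structural tools will be used throughout. First, since $\K_a\ge0$, a Cauchy--Schwarz estimate against the measure $\K_a(x-y)\,\mathrm dx\,\mathrm dy$ yields $V(fg,fg)\le V(f^2,f^2)^{1/2}V(g^2,g^2)^{1/2}$; in particular $f\mapsto V(fg,fg)^{1/2}=\tfrac1{\sqrt{4\pi}}\|\Phi[fg]\|_\A$ is a seminorm. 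Second, the algebraic splitting $u_n^2-u^2=(u_n-u)^2+2u(u_n-u)$ turns the difference of potentials into $\phi_{u_n}-\phi_u=\Phi[(u_n-u)^2]+2\Phi[u(u_n-u)]$, reducing both parts of the statement to the control of the cross term $V(u(u_n-u),u(u_n-u))$.

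For part~(\ref{weakconv}), weak convergence in $\E$ gives boundedness in $\E$, hence boundedness of $\{\phi_{u_n}\}$ in the Hilbert space $\A$ via the energy identity, so along a subsequence $\phi_{u_n}\weakto\psi$ in $\A$. To identify $\psi=\phi_u$ I would pass to the limit in the weak formulation $\langle\phi_{u_n},\eta\rangle_\A=4\pi\irt u_n^2\eta$ for $\eta\in C_c^\infty(\RT)$: the left-hand side converges by weak convergence, while the continuous embedding $\E\hookrightarrow D^{1,2}(\RT)$ together with the compactness of $D^{1,2}(\RT)\hookrightarrow L^2_{\mathrm{loc}}(\RT)$ gives $u_n^2\to u^2$ in $L^1_{\mathrm{loc}}$, so the right-hand side converges to $4\pi\irt u^2\eta$. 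By density of $C_c^\infty(\RT)$ in $\A$ this shows $\psi$ is the unique weak solution $\phi_u$ of Lemma~\ref{thmE}, and the usual ``every subsequence has the same limit'' argument upgrades the conclusion to the full sequence.

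Part~(\ref{stronvconv}) is the substantive one, and both implications reduce to the single continuity claim: \emph{if $w_n\to0$ in $D^{1,2}(\RT)$ and $\{w_n\}$ is bounded in $\E$, then $V(uw_n,uw_n)\to0$}. Granting this, for the direction ($\Rightarrow$) I note that $\|\nabla(u_n-u)\|_2\le\|u_n-u\|_\E\to0$ gives $D^{1,2}$-convergence, that $V((u_n-u)^2,(u_n-u)^2)\le\|u_n-u\|_\E^4\to0$, and that the splitting, the triangle inequality in $\A$, and the claim applied to $w_n=u_n-u$ give $\|\phi_{u_n}-\phi_u\|_\A\to0$. For ($\Leftarrow$) I run the splitting backwards, $\Phi[(u_n-u)^2]=\Phi[u_n^2-u^2]-2\Phi[u(u_n-u)]$, so that
$$
V((u_n-u)^2,(u_n-u)^2)^{1/2}\le V(u_n^2-u^2,u_n^2-u^2)^{1/2}+2\,V(u(u_n-u),u(u_n-u))^{1/2};
$$
the first term equals $\tfrac1{\sqrt{4\pi}}\|\phi_{u_n}-\phi_u\|_\A\to0$ by hypothesis and the second $\to0$ by the claim, whence $\|u_n-u\|_\E^2=\|\nabla(u_n-u)\|_2^2+V((u_n-u)^2,(u_n-u)^2)^{1/2}\to0$. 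In each case $\{w_n=u_n-u\}$ is bounded in $\E$ (from $u_n\to u$ in $\E$, respectively from $\phi_{u_n}\to\phi_u$ in $\A$ plus $D^{1,2}$-boundedness) and $\|w_n\|_6\to0$, so the claim applies.

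The main obstacle is precisely this cross-term claim, and it is delicate because $D^{1,2}(\RT)$ controls only $L^6$ and no lower global Lebesgue norm, so the naive Hardy--Littlewood--Sobolev bound $V(uw_n,uw_n)\le C\|uw_n\|_{6/5}^2$ cannot be closed globally. I would circumvent this by localizing through the density of $C_c^\infty(\RT)$ in $\E$ (Lemma~\ref{density}): given $\eta>0$, choose $\tilde u\in C_c^\infty(\RT)$ with $\|u-\tilde u\|_\E<\eta$ and use the seminorm property to write $V(uw_n,uw_n)^{1/2}\le V((u-\tilde u)w_n,(u-\tilde u)w_n)^{1/2}+V(\tilde u w_n,\tilde u w_n)^{1/2}$. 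The error term is controlled uniformly in $n$ by the Cauchy--Schwarz estimate, $V((u-\tilde u)w_n,(u-\tilde u)w_n)\le V((u-\tilde u)^2,(u-\tilde u)^2)^{1/2}V(w_n^2,w_n^2)^{1/2}\le\|u-\tilde u\|_\E^2\,\sup_n\|w_n\|_\E^2\le C\eta^2$. For the main term, $\tilde u$ is bounded with support in some ball $B_\rho$, so $\K_a\le|\cdot|^{-1}$ and Hardy--Littlewood--Sobolev give $V(\tilde u w_n,\tilde u w_n)\le C\|\tilde u\|_\infty^2\|w_n\|_{L^{6/5}(B_\rho)}^2\le C\|\tilde u\|_\infty^2|B_\rho|^{4/3}\|w_n\|_6^2\to0$ for fixed $\rho$, since $\|w_n\|_6\to0$. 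Letting $n\to\infty$ and then $\eta\to0$ closes the claim. I expect the careful verification of the seminorm/Cauchy--Schwarz structure of $V$ and the bookkeeping of the localization to carry essentially all the weight, after which both implications of~(\ref{stronvconv}) follow mechanically.
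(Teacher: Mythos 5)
The paper itself does not prove this lemma: it is quoted verbatim from \cite[Lemma 4.3]{CDPSY25}, so there is no in-paper argument to compare against, and your proof must be judged on its own merits. It stands up. Part (\ref{weakconv}) is the expected argument (weak convergence gives $\E$-boundedness, hence $\A$-boundedness of $\{\phi_{u_n}\}$ via Lemma~\ref{thmE}; identification of the weak limit through the weak formulation against $C_c^\infty(\RT)$ tests, using $u_n^2\to u^2$ in $L^1_{\rm loc}$ from Rellich on balls, then density of $C_c^\infty(\RT)$ in $\A$ and the subsequence trick), and part (\ref{stronvconv}) is correctly reduced, via the splitting $u_n^2-u^2=(u_n-u)^2+2u(u_n-u)$, to your cross-term claim, whose proof by density of $C_c^\infty(\RT)$ in $\E$ (Lemma~\ref{density}), the pointwise Cauchy--Schwarz bound $V(fg,fg)\le V(f^2,f^2)^{1/2}V(g^2,g^2)^{1/2}$ (valid since $\K_a\ge 0$), and Hardy--Littlewood--Sobolev with $\K_a\le|\cdot|^{-1}$ on the compactly supported piece is exactly the right way around the absence of any global $L^{6/5}$ control in $D^{1,2}(\RT)$. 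Two small points should be made explicit. First, you use the energy identity $\|\Phi[h]\|_\A^2=4\pi V(h,h)$ (with $\Phi[h]:=\K_a*h$) for the \emph{signed} density $h=u(u_n-u)$, whereas Lemma~\ref{thmE} states it only for $h=v^2$ with $v\in\E$; this is avoidable: testing the equation for $\phi_{u_n}$ with $\phi_u\in\A$ gives $\langle\phi_{u_n},\phi_u\rangle_\A=4\pi V(u_n^2,u^2)$, hence by bilinearity $\|\phi_{u_n}-\phi_u\|_\A^2=4\pi V(u_n^2-u^2,u_n^2-u^2)$, after which your triangle inequalities can be run purely at the level of the seminorm $h\mapsto V(h,h)^{1/2}$ via Lemma~\ref{prop:main} (whose hypotheses $V(|h_i|,|h_i|)<+\infty$ you do verify through the pointwise Cauchy--Schwarz bound). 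Second, in the ($\Leftarrow$) direction the $\E$-boundedness of $w_n=u_n-u$, needed to apply your claim, is asserted rather than shown; one line fixes it: $w_n^2\le 2u_n^2+2u^2$ pointwise and $\K_a\ge0$ give $V(w_n^2,w_n^2)\le 4\big(V(u_n^2,u_n^2)^{1/2}+V(u^2,u^2)^{1/2}\big)^2$, which is bounded because $\|\phi_{u_n}\|_\A^2=4\pi V(u_n^2,u_n^2)$ converges. With these two details supplied, your proof is a complete and self-contained verification of the quoted lemma, using only tools this paper already imports (Lemmas~\ref{thmE}, \ref{density}, \ref{prop:main}) plus HLS.
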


\begin{lem}(\cite[Theorem 1.2]{CDPSY25})\label{tE36}
	The space $\E$ is continuously embedded into $L^\tau(\RT)$, for any $\tau\in [3,6]$.
\end{lem}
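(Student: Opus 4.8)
The plan is to establish the two endpoints $\tau=6$ and $\tau=3$ and then fill the range by interpolation; note first that $\|\cdot\|_\E$ is $1$-homogeneous (the term $V(u^2,u^2)^{1/2}$ is quadratic in $u$), so it suffices to produce linear bounds $\|u\|_\tau\le C\|u\|_\E$. The endpoint $\tau=6$ is immediate: since $\E\subset D^{1,2}(\RT)$ and $\|\n u\|_2\le\|u\|_\E$, the Sobolev inequality gives $\|u\|_6\le S\|\n u\|_2\le S\|u\|_\E$. The case $\tau=3$ is the heart of the matter, and for $\tau\in(3,6)$ I would simply interpolate, $\|u\|_\tau\le\|u\|_3^{1-\theta}\|u\|_6^{\theta}\le C\|u\|_\E$, with $\tfrac1\tau=\tfrac{1-\theta}{3}+\tfrac{\theta}{6}$.

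For the borderline exponent $\tau=3$ the target is $\|u\|_3\le C\|u\|_\E$, equivalently $w:=u^2\in L^{3/2}(\RT)$ with a matching bound. Here the Dirichlet part of the norm and the Bopp--Podolsky energy play complementary roles, which is cleanest on the Fourier side: a direct computation gives $\widehat{\K_a}(\xi)=\frac{4\pi}{|\xi|^2(1+a^2|\xi|^2)}$, so that $V(u^2,u^2)=4\pi\irt\frac{|\widehat{w}(\xi)|^2}{|\xi|^2(1+a^2|\xi|^2)}\,d\xi$. For low frequencies $|\xi|\le 1/a$ the weight is comparable to $|\xi|^{-2}$, so $V$ controls the homogeneous $\dot{H}^{-1}$ (i.e. Coulomb) norm of the low-frequency part $w_{\mathrm{lo}}$ of $w$; for high frequencies the weight only decays like $|\xi|^{-4}$ and contributes nothing towards $L^3$, but there the gradient takes over. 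I would split $w=w_{\mathrm{lo}}+w_{\mathrm{hi}}$ at $|\xi|=1/a$ and estimate the two pieces separately. For $w_{\mathrm{hi}}$, since $\n w=2u\n u$, Hölder gives $\|\n w\|_{3/2}\le 2\|u\|_6\|\n u\|_2\le C\|\n u\|_2^2$, and an inverse Bernstein inequality yields $\|w_{\mathrm{hi}}\|_{3/2}\le Ca\|\n w\|_{3/2}\le Ca\|\n u\|_2^2$. For $w_{\mathrm{lo}}$, combining $\|w_{\mathrm{lo}}\|_3\le C\|w\|_3=C\|u\|_6^2$ with the bound $\|w_{\mathrm{lo}}\|_{\dot{H}^{-1}}\le CV(u^2,u^2)^{1/2}$ through the Gagliardo--Nirenberg interpolation $\|w_{\mathrm{lo}}\|_{3/2}\le C\|w_{\mathrm{lo}}\|_3^{1/3}\|w_{\mathrm{lo}}\|_{\dot{H}^{-1}}^{2/3}$ gives $\|w_{\mathrm{lo}}\|_{3/2}\le C\|u\|_6^{2/3}V(u^2,u^2)^{1/3}$. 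Raising to the power $3/2$ and using $\|u\|_6\le S\|\n u\|_2\le S\|u\|_\E$ I obtain $\|u\|_3^3=\|w\|_{3/2}^{3/2}\le C_a\big(\|\n u\|_2^3+\|\n u\|_2V(u^2,u^2)^{1/2}\big)$; since $\|\n u\|_2^3+\|\n u\|_2V(u^2,u^2)^{1/2}\le 2\|u\|_\E^3$, the desired bound $\|u\|_3\le C_a\|u\|_\E$ follows (the constant being allowed to depend on $a$).

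The main obstacle is exactly this $L^3$ endpoint: because $\K_a$ is bounded near the origin, the Bopp--Podolsky energy only carries the long-range (low-frequency) information that produces integrability below $L^6$, so one must isolate that part and show that the complementary short-range (high-frequency) part is absorbed by the Dirichlet norm, the two being separated by the intrinsic scale $|\xi|\sim 1/a$. Making the negative-order Gagliardo--Nirenberg step and the frequency localization rigorous, for instance via Littlewood--Paley projections (which act boundedly on $L^3$), is the technical core. A real-variable alternative avoiding Fourier analysis is available through the pointwise lower bound $\K_a(z)\ge\frac{1-e^{-1}}{|z|+a}$, which bounds $V(u^2,u^2)$ below by a screened Coulomb energy that retains the $1/|z|$ tail; one can then run the classical Coulomb--Sobolev argument on this screened energy, with the gradient again handling the region $|x-y|\lesssim a$. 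I expect this endpoint to require the most care, while the $\tau=6$ case and the interpolation are routine.
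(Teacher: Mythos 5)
Your endpoint $\tau=6$, the interpolation for $\tau\in(3,6)$, the homogeneity remark, and the high-frequency step are all fine (the bound $\|w_{\mathrm{hi}}\|_{3/2}\le Ca\|\nabla w\|_{3/2}$ is a legitimate Mikhlin-multiplier estimate, and $\|\nabla (u^2)\|_{3/2}\le 2\|u\|_6\|\nabla u\|_2$ is correct). But the pivotal low-frequency step fails: the negative-order Gagliardo--Nirenberg inequality $\|f\|_{3/2}\le C\|f\|_{3}^{1/3}\|f\|_{\dot H^{-1}}^{2/3}$ is \emph{false}, even for nonnegative functions of the form $f=u^2$ that are essentially band-limited to $|\xi|\lesssim 1/a$. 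Take $u=\sum_{k=1}^{N}\phi(\cdot-x_k)$ with a fixed bump $\phi\ge 0$ of width comparable to $a$ and separations $|x_k-x_l|\to\infty$: then $\|u^2\|_{3/2}=\|u\|_3^2\sim N^{2/3}$ and $\|u^2\|_3=\|u\|_6^2\sim N^{1/3}$, while the Hilbertian norm only registers the bumps in an $\ell^2$ fashion, $\|u^2\|_{\dot H^{-1}}\sim N^{1/2}$, so the right-hand side is $\sim N^{1/9}\cdot N^{1/3}=N^{4/9}\ll N^{2/3}$. Scaling consistency (your exponents $1/3,2/3$ are forced by dilations) is necessary but not sufficient; structurally, interpolating $L^3$ with $\dot H^{-1}$ can only yield spaces of negative smoothness with integrability between $2$ and $3$, and no embedding can then \emph{lower} the Lebesgue exponent to $3/2$, since negative-smoothness Sobolev embeddings raise, never lower, integrability. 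The same example defeats your intermediate bound $\|w_{\mathrm{lo}}\|_{3/2}\le C\|u\|_6^{2/3}V(u^2,u^2)^{1/3}$ directly, so the gap is not a matter of sharpening constants or of the choice of cutoff.

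This failure is precisely why the $L^3$ endpoint of Coulomb--Sobolev embeddings is never obtained by interpolation of norms in the literature: in \cite{R10,IR12,MMV16} it is proved by genuinely real-variable arguments (level-set or covering decompositions in physical space that exploit the positivity and the long-range tail of the kernel, forcing widely separated bumps to pay pairwise in the interaction energy rather than in an $\ell^2$ sense). Note also that the present paper does not prove this lemma at all; it quotes it from \cite[Theorem 1.2]{CDPSY25}, whose argument is of this real-variable type adapted to $\K_a$. Your closing remark --- using the pointwise bound $\K_a(z)\ge (1-e^{-1})/(|z|+a)$ to reduce $V(u^2,u^2)$ to a screened Coulomb energy, with the Dirichlet term absorbing the scales $|x-y|\lesssim a$ --- is indeed the viable route and is in the spirit of the cited proof; but in your write-up it is a one-sentence fallback subordinate to the Fourier argument, whose core lemma is false. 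To repair the proposal, discard the $\dot H^{-1}$ interpolation entirely and carry out that screened Coulomb--Sobolev argument in full, where the additive term $\|\nabla u\|_2^3$ in the target inequality $\|u\|_3^3\le C_a\bigl(\|\nabla u\|_2^3+\|\nabla u\|_2 V(u^2,u^2)^{1/2}\bigr)$ naturally accounts for the short-range region that the screened kernel no longer controls.
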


\begin{lem}(\cite[Theorem 1.3]{CDPSY25})\label{tEr187}
	The space $\E_r$ is continuously embedded into $L^\tau(\RT)$, for any $\tau\in (18/7,6]$. The embedding is compact for any $\tau\in (18/7,6)$.
\end{lem}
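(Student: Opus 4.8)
The plan is to prove the two assertions separately, and in each case to reduce matters to the range $\tau<3$, since the range $\tau\in[3,6]$ is already covered for the whole space $\E$ by Lemma~\ref{tE36} (and $\E_r\subset\E$). The guiding principle is that, \emph{for radial} $u$, finite Bopp--Podolsky energy $V(u^2,u^2)$ forces a quantitative decay of the $L^2$-mass at infinity; coupling this with the pointwise radial decay coming from $\nabla u\in L^2(\RT)$ pushes the integrability threshold below $3$, exactly down to $18/7$.

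For the \emph{continuous} embedding I would establish the estimate
\begin{equation*}
	\|u\|_{18/7}^{18/7}\le C\,\|\nabla u\|_2^{2/7}\,\big(V(u^2,u^2)+\|u\|_6^4\big)^{4/7},\qquad u\in\E_r,
\end{equation*}
which is the cut-off analogue of Ruiz's scale-invariant inequality $\|u\|_{18/7}^{18/7}\le C\|\nabla u\|_2^{2/7}D(u)^{4/7}$ for the Coulomb energy $D(u):=\int_{\RT}\int_{\RT}u^2(x)u^2(y)/|x-y|\,\mathrm{d}x\,\mathrm{d}y$. From it, using $\|u\|_6\le C\|\nabla u\|_2\le C\|u\|_{\E}$ and $V(u^2,u^2)\le\|u\|_{\E}^4$, one gets $\|u\|_{18/7}\le C\|u\|_{\E}$, and interpolation with the endpoint $\tau=6$ then gives $\|u\|_\tau\le C\|u\|_{\E}$ for every $\tau\in(18/7,6]$. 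The key input is a weighted-mass estimate: setting $m(r):=\int_{|y|\le r}u^2\,\mathrm{d}y$, I would use the Coulomb tail of $\K_a$ —namely $\K_a(z)\ge c/|z|$ for $|z|\ge a$, since $t\mapsto(1-e^{-t/a})/t$ is decreasing— together with Newton's theorem for radial functions to obtain $\phi_u(x)\ge (c/|x|)\,m(|x|)$ for $|x|\ge a$. Writing $V(u^2,u^2)=\int_{\RT}\phi_u u^2$ in radial coordinates and integrating by parts (the boundary term at infinity having the favorable sign, and that at $r=a$ being bounded by $m(a)\le C\|u\|_6^2$) then yields
\begin{equation*}
	\int_a^\infty \frac{m(r)^2}{r^2}\,\mathrm{d}r\le C\big(V(u^2,u^2)+\|u\|_6^4\big).
\end{equation*}

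Here I expect the \textbf{main obstacle}. The delicate point is sharpness: combining the pointwise decay $|u(x)|\le C\|\nabla u\|_2\,|x|^{-1/2}$ (valid for radial $u\in D^{1,2}(\RT)$) crudely with an $L^2$-type tail bound only reaches $\tau>3$, and extracting the improvement down to $18/7$ requires feeding the weighted control of $m$ into a carefully optimized interpolation, which amounts to a one-dimensional weighted inequality in the radial variable $r$. One checks that the two ingredients $\|\nabla u\|_2$ and $\int_0^\infty m^2/r^2\,\mathrm{d}r$ scale exactly like $\|\nabla u\|_2$ and $D(u)$, so they have the right homogeneity to produce the threshold $18/7$; proving the resulting 1D inequality is the crux. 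Moreover $\K_a$ is \emph{not} homogeneous, so Ruiz's scaling argument does not transfer verbatim: it is the two-sided comparison $\K_a(z)\le 1/|z|$ and $\K_a(z)\ge c/|z|$ for $|z|\ge a$ that lets me recover the correct exponent while absorbing the non-scaling (Yukawa) remainder and the lower-order boundary terms into the harmless correction $\|u\|_6^4$.

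Finally, for the \emph{compactness} on $\tau\in(18/7,6)$ I would use the standard ``local compactness plus uniform tail decay'' scheme. Let $\{u_n\}\subset\E_r$ be bounded; up to a subsequence $u_n\weakto u$ in $\E_r$, hence in $D^{1,2}(\RT)$, and by Lemma~\ref{lem3.4} also $\phi_{u_n}\weakto\phi_u$. On each ball $B_R$ the Rellich--Kondrachov theorem gives $u_n\to u$ strongly in $L^\tau(B_R)$ for $\tau<6$. For the tails, the uniform bound on $V(u_n^2,u_n^2)$ together with the weighted-mass estimate above and the pointwise radial decay yields $\int_{|x|>R}|u_n|^\tau\,\mathrm{d}x\to0$ as $R\to\infty$, uniformly in $n$, for every $\tau\in(18/7,6)$. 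Combining the local strong convergence with the uniform smallness of the tails gives $u_n\to u$ in $L^\tau(\RT)$, which is the claimed compactness. The endpoints are genuinely excluded: at $\tau=18/7$ the tail decay fails to be uniform, and at $\tau=6$ compactness is destroyed by concentration at the critical Sobolev exponent.
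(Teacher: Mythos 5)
This lemma is not proved in the paper at all: it is imported verbatim from \cite[Theorem 1.3]{CDPSY25}, so your attempt can only be compared with the Ruiz-type strategy of that reference, which your sketch indeed mirrors. Your structural ingredients are all sound: the two-sided comparison $\K_a(z)\le 1/|z|$ and $\K_a(z)\ge (1-e^{-1})/|z|$ for $|z|\ge a$, the Newton-theorem lower bound $\phi_u(x)\ge c\,m(|x|)/|x|$ for $|x|\ge a$, the integration by parts yielding $\int_a^\infty m(r)^2r^{-2}\,dr\le C\big(V(u^2,u^2)+\|u\|_6^4\big)$ (the boundary term at infinity does have the favorable sign, and $m(a)\le C_a\|u\|_6^2$), and the scaling bookkeeping producing the exponents $2/7$ and $4/7$.

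The genuine gap is that the decisive step --- passing from $\|\nabla u\|_2$ and the weighted quantity $\int_a^\infty m^2/r^2\,dr$ to the $L^{18/7}$ bound --- is only announced (``proving the resulting 1D inequality is the crux'') and never carried out; that dyadic/interpolation argument is exactly where the exponent $18/7$ is born, so as submitted the proof is incomplete. You are, however, closer to the finish line than you realize, and the gap can be closed by citation rather than by a new 1D inequality. For radial $u$, Newton's theorem gives $D(u)=2\int_0^\infty m(r)m'(r)r^{-1}\,dr=\int_0^\infty m(r)^2r^{-2}\,dr$ whenever the right-hand side is finite: the boundary contribution vanishes at $0$ since $m(r)\le C\|u\|_6^2 r^2$, and at infinity since $m(R)^2/R\le\int_R^\infty m(s)^2s^{-2}\,ds$ because $m$ is nondecreasing. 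Moreover the inner piece satisfies $\int_0^a m^2/r^2\,dr\le C_a\|u\|_6^4$. Hence your weighted-mass estimate says precisely $D(u)\le C\big(V(u^2,u^2)+\|u\|_6^4\big)$, i.e.\ bounded sets of $\E_r$ are bounded in Ruiz's space $E_r$; the continuous embedding then follows by quoting Ruiz's radial inequality $\|u\|_{18/7}^{18/7}\le C\|\nabla u\|_2^{2/7}D(u)^{4/7}$ and the compact one by quoting the compactness of $E_r\hookrightarrow L^\tau(\RT)$, $\tau\in(18/7,6)$, from \cite{R10} wholesale. A second, smaller flaw lies in your compactness paragraph: the uniform tail smallness cannot come from ``the pointwise radial decay'' $|u(x)|\le C\|\nabla u\|_2|x|^{-1/2}$, since $\int_{|x|>R}|x|^{-\tau/2}\,dx=+\infty$ for every $\tau<6$; it must come either from the quantitative dyadic estimate you left unproven or from the reduction to $E_r$ just described.
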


Moreover, the Bopp-Podolsky energy $V(f,g)$ satisfies the following Cauchy-Schwarz type inequality.

\begin{lem}(\cite[Proposition 3.4]{CDPSY25})\label{prop:main}
	For all  functions $f,g:\RT\to\R$, with $V(|f|, |f|), V(|g|, |g|)<+\infty$, we have that
	\begin{equation*}\label{CS}
		|V(f,g)|^2\leq V(f,f)V(g,g).
	\end{equation*}
\end{lem}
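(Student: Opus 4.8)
The plan is to recognise $V$ as a symmetric, positive semidefinite bilinear form and then to read off the stated inequality as the Cauchy--Schwarz inequality for the associated (possibly degenerate) inner product. The symmetry $V(f,g)=V(g,f)$ is immediate from the radial symmetry of $\K_a$, which gives $\K_a(x-y)=\K_a(y-x)$. The entire content therefore lies in the positivity $V(f,f)\ge 0$ for arbitrary \emph{signed} $f$, and this is the step I expect to be the main obstacle, since it is here that the specific structure of the Bopp--Podolsky kernel is essential: mere nonnegativity of $\K_a$ is not enough.

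To obtain the positivity I would use that $\K_a$ is the fundamental solution of the \emph{positive} operator $-\Delta+a^2\Delta^2$. In Fourier variables this operator has the nonnegative symbol $|\xi|^2(1+a^2|\xi|^2)$, so that
\[
	\widehat{\K_a}(\xi)=\frac{4\pi}{|\xi|^2\,(1+a^2|\xi|^2)}\ge 0 .
\]
By Parseval's identity for the convolution $\K_a*f$ this yields, for real-valued $f,g$,
\[
	V(f,g)=\irt (\K_a*f)\,g\,dx=\irt \widehat{\K_a}(\xi)\,\widehat f(\xi)\,\overline{\widehat g(\xi)}\,d\xi ,
\]
which exhibits $V$ as the inner product of $\widehat f$ and $\widehat g$ in the weighted space $L^2(\RT;\widehat{\K_a}\,d\xi)$; in particular $V(f,f)=\irt \widehat{\K_a}\,|\widehat f|^2\,d\xi\ge 0$. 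Equivalently, and more in the spirit of the functional setting of this paper, one can set $\phi_f:=\K_a*f$, extend Lemma~\ref{thmE} (applied to $f^+$ and $f^-$, each dominated by $|f|$ since $0\le f^\pm\le|f|$ and $\K_a\ge 0$) to show $\phi_f\in\A$ solves $-\Delta\phi_f+a^2\Delta^2\phi_f=4\pi f$ weakly, and test this equation against $\phi_g$ to obtain the identity $\langle\phi_f,\phi_g\rangle_\A=4\pi\,V(f,g)$; then $V(f,f)=\tfrac{1}{4\pi}\|\phi_f\|_\A^2\ge 0$.

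With either representation the conclusion is immediate. From the Fourier identity, the Cauchy--Schwarz inequality in $L^2(\RT;\widehat{\K_a}\,d\xi)$ gives at once
\[
	|V(f,g)|\le\Big(\irt \widehat{\K_a}\,|\widehat f|^2\Big)^{1/2}\Big(\irt \widehat{\K_a}\,|\widehat g|^2\Big)^{1/2}=V(f,f)^{1/2}\,V(g,g)^{1/2},
\]
and squaring yields the claim; this argument simultaneously shows that $V(f,g)$ is well defined and finite. From the potential representation one instead applies Cauchy--Schwarz in the Hilbert space $(\A,\langle\cdot,\cdot\rangle_\A)$. Purely abstractly, one may also expand $0\le V(f+tg,f+tg)=V(f,f)+2t\,V(f,g)+t^2V(g,g)$ for every $t\in\R$ and conclude from the nonpositivity of the discriminant $4V(f,g)^2-4V(f,f)V(g,g)$.

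The only real difficulty is to make these manipulations rigorous under the weak hypothesis $V(|f|,|f|),V(|g|,|g|)<+\infty$, which is considerably less than requiring $f=u^2$ with $u\in\E$. I would dispose of this by first proving the inequality for $f,g\in C_c^\infty(\RT)$ --- where $\K_a*f$ is smooth with the expected decay, all integrals converge absolutely by Tonelli, and both Parseval and the integration by parts are classical --- and then passing to general $f,g$ by a density and approximation argument, invoking Lemma~\ref{density} together with the decomposition $f=f^+-f^-$ and monotone convergence (legitimate because $\K_a\ge 0$) to control the limits of $V(|f|,|f|)$ and $V(f,g)$.
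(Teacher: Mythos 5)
You should note at the outset that this paper contains no proof of the lemma at all: it is imported verbatim from \cite[Proposition 3.4]{CDPSY25}, so your argument can only be judged on its own correctness. Its main line is sound and is the natural one. The transform $\widehat{\K_a}(\xi)=4\pi/\bigl(|\xi|^2(1+a^2|\xi|^2)\bigr)$ is correct (Coulomb minus Yukawa: $4\pi|\xi|^{-2}-4\pi(|\xi|^2+a^{-2})^{-1}$), Plancherel identifies $V$ with the inner product of $L^2(\RT;\widehat{\K_a}\,d\xi)$ (up to a harmless $(2\pi)^{-3}$, depending on convention), and Cauchy--Schwarz follows; your observation that mere nonnegativity of $\K_a$ would not suffice for $V(f,f)\ge 0$ on signed $f$ is exactly right, and so is your recognition that the real work lies in the weak hypothesis $V(|f|,|f|),V(|g|,|g|)<+\infty$.

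Two steps of your patch for that weak hypothesis need repair. First, Lemma \ref{density} is the wrong tool: it gives density of $C_c^\infty(\RT)$ in $(\E,\|\cdot\|_\E)$, i.e.\ approximation at the level of $u$, whereas here $f,g$ are arbitrary measurable functions, not of the form $u^2$ and not members of any space where that density statement applies. What you need is plain truncation, $f_n:=f\,\mathbf{1}_{\{|f|\le n\}}\mathbf{1}_{B_n(0)}$, mollified if you want smooth compact support. Second, monotone convergence only governs the nonnegative case; for signed $f$ the integrands $\K_a(x-y)f_n(x)g_n(y)$ change sign, and the passage $V(f_n,g_n)\to V(f,g)$ requires dominated convergence, the dominating function being $\K_a(x-y)|f(x)||g(y)|$ --- integrable precisely because the inequality applied first to $|f|,|g|$ yields $V(|f|,|g|)\le V(|f|,|f|)^{1/2}V(|g|,|g|)^{1/2}<+\infty$. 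So the correct order is: nonnegative case by truncation and monotone convergence, deduce absolute convergence of $V(f,g)$, then the signed case by dominated convergence; your outline contains these ingredients but names the wrong lemma and the wrong convergence theorem. Your alternative route is weaker still: Lemma \ref{thmE} is proved for $f=u^2$ with $u\in\E$, and ``extending'' it to signed measurable $f$ with only $V(|f|,|f|)<+\infty$ (so that $\phi_f\in\A$ and $\langle\phi_f,\phi_g\rangle_\A=4\pi V(f,g)$) is essentially equivalent to the Fourier computation you are trying to avoid, hence circular as stated. Finally, if you want to bypass approximation entirely, factor the kernel: $h:=\mathcal{F}^{-1}\bigl(\widehat{\K_a}^{1/2}\bigr)\ge 0$, being a convolution of the Riesz kernel $c|x|^{-2}$ with a (positive) Bessel kernel, or use the subordination $\K_a(x)=\int_0^{1/a}e^{-t|x|}\,dt$ with each $e^{-t|x|}$ positive definite; then Tonelli gives $V(|f|,|f|)=\|h*|f|\|_2^2$, the pointwise bound $|h*f|\le h*|f|$ makes everything absolutely convergent, and Cauchy--Schwarz in $L^2(\RT)$ proves the claim for all measurable $f,g$ in one stroke.
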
	

In addition, the following inequality holds:
\begin{lem}(\cite[Lemma 5.4]{CDPSY25})\label{lem2.3}
	Let $b\ge0$. Then one has
	\begin{equation}\label{eq3.24}
		t^3(e^{-\frac{b}{t}}-e^{-b})+\frac{1-t^3}{3}be^{-b}\ge0, ~~\forall t>0.
	\end{equation}
	
\end{lem}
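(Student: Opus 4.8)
The plan is to regard the left-hand side of \eqref{eq3.24} as a one-variable function $f(t)$ for each fixed $b\ge 0$ and to show that it attains its global minimum, equal to $0$, exactly at $t=1$. The case $b=0$ is immediate, since then every term vanishes; so I would fix $b>0$ and set
\[
f(t):=t^3\bigl(e^{-b/t}-e^{-b}\bigr)+\tfrac{1-t^3}{3}\,b\,e^{-b},\qquad t>0,
\]
noting at once that $f(1)=0$. Thus \eqref{eq3.24} is equivalent to the assertion that $t=1$ minimizes $f$ on $(0,\infty)$. I would first record the boundary behaviour: as $t\to 0^+$ the term $t^3e^{-b/t}$ decays to $0$, so $f(t)\to \tfrac{b}{3}e^{-b}>0$; and as $t\to+\infty$ the dominant term is $t^3\bigl(1-e^{-b}-\tfrac{b}{3}e^{-b}\bigr)$, whose coefficient vanishes at $b=0$ and has positive $b$-derivative $\tfrac{1}{3}(2+b)e^{-b}$, hence is strictly positive for $b>0$, giving $f(t)\to+\infty$.

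Next I would locate the critical points. Differentiation yields
\[
f'(t)=(3t^2+bt)\,e^{-b/t}-(3+b)\,t^2 e^{-b},
\]
so dividing by $t^2>0$ shows that $f'(t)=0$ is equivalent to $g(t)=(3+b)e^{-b}$, where
\[
g(t):=\Bigl(3+\tfrac{b}{t}\Bigr)e^{-b/t}.
\]
Setting $s=b/t$ gives $g=(3+s)e^{-s}$, whose $s$-derivative is $-(2+s)e^{-s}<0$; since $s=b/t$ is strictly decreasing in $t$, the map $t\mapsto g(t)$ is strictly increasing. As $g(1)=(3+b)e^{-b}$ is precisely the right-hand side, strict monotonicity forces $t=1$ to be the unique zero of $f'$.

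Finally I would combine the uniqueness of the critical point with the boundary behaviour to conclude. Because $f$ is smooth and has $t=1$ as its only critical point, $f'$ keeps a constant sign on each of $(0,1)$ and $(1,\infty)$. The limit $f\to+\infty$ at infinity forces $f'>0$ on $(1,\infty)$, so $f>f(1)=0$ there; while on $(0,1)$ a positive $f'$ would make $f$ increasing and yield $\tfrac{b}{3}e^{-b}=f(0^+)<f(1)=0$, a contradiction, hence $f'<0$ on $(0,1)$ and again $f>0$. Therefore $f(t)\ge f(1)=0$ for all $t>0$, which is exactly \eqref{eq3.24}. As a consistency check, $f''(1)=b(2+b)e^{-b}>0$ confirms that the critical point is indeed a minimum. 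I do not anticipate a serious obstacle; the one step deserving care is the monotonicity of $g$, which is the mechanism guaranteeing that $t=1$ is the sole critical point and thus the global minimizer.
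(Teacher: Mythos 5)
Your argument is correct: the computation of $f'(t)=(3t^2+bt)e^{-b/t}-(3+b)t^2e^{-b}$ is right, the substitution $s=b/t$ does show that $t\mapsto(3+b/t)e^{-b/t}$ is strictly increasing (since $\frac{d}{ds}[(3+s)e^{-s}]=-(2+s)e^{-s}<0$), so $t=1$ is the unique critical point, and the boundary limits $f(0^+)=\tfrac{b}{3}e^{-b}>0$ and $f(t)\to+\infty$ then force $t=1$ to be the global minimum with $f(1)=0$. Note that the paper itself gives no proof of this inequality --- it is quoted verbatim from \cite[Lemma 5.4]{CDPSY25} --- so there is no in-paper argument to compare against; your proof is a clean, self-contained elementary justification of the cited fact.
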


 Meanwhile, the following statements are valid:
\begin{lem}(\cite[Proposition 5.2]{CDPSY25})\label{IC1}
	For each $p\in[3,6]$, we have that $I_a\in C^2(\E)$, and for any $u, v\in \E$
	\begin{equation*}\label{dI}
		I_a'(u)[v] = \int_{\R^3} \nabla u \cdot \nabla v dx + q^2 \int_{\R^3} \phi_u u v dx - \int_{\R^3} |u|^{p-2}u v dx.
	\end{equation*}
	The same result holds on $\mathcal E_r$, for $p\in(18/7,6]$.
\end{lem}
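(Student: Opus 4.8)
The plan is to split $I_a = T + N - P$ into its three natural pieces, namely the quadratic Dirichlet term $T(u) = \frac{1}{2}\|\nabla u\|_2^2$, the nonlocal quartic term $N(u) = \frac{q^2}{4}V(u^2,u^2)$ and the local term $P(u) = \frac{1}{p}\|u\|_p^p$, and to show that each is of class $C^2$ on $\E$ (resp.\ $\E_r$), with the derivatives adding up to the claimed formula. The term $T$ is a bounded symmetric continuous quadratic form on $\E$, since $\|\nabla u\|_2 \le \|u\|_\E$; hence it is $C^\infty$, with $T'(u)[v] = \int_{\R^3}\nabla u\cdot\nabla v\,dx$ and $T''(u)[v,w] = \int_{\R^3}\nabla v\cdot\nabla w\,dx$.

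For the nonlocal term, I would first observe that $N$ is a homogeneous polynomial of degree four which is bounded on $\E$: by the very definition of the norm one has $V(u^2,u^2)^{1/2}\le\|u\|_\E^2$, so $|N(u)|\le\frac{q^2}{4}\|u\|_\E^4$. Computing the Gâteaux derivative by expanding $(u+tv)^2$ and collecting powers of $t$, and using the bilinearity and symmetry of $V$, gives
$$N'(u)[v] = q^2\,V(u^2,uv) = q^2\int_{\R^3}\phi_u\, u\, v\,dx, \qquad N''(u)[v,w] = q^2\big(2V(uv,uw)+V(u^2,vw)\big).$$
Each of these expressions is finite and defines a bounded multilinear form: by the Cauchy--Schwarz inequality of Lemma~\ref{prop:main}, the pointwise bound $|fg|\le\frac12(f^2+g^2)$ and the monotonicity of $V(\cdot,\cdot)$ on nonnegative functions (a consequence of $\K_a\ge0$), all the quantities $V(uv,uw)$, $V(u^2,vw)$, etc.\ are controlled by products of the $\|\cdot\|_\E$-norms. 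Since a continuous homogeneous polynomial on a Banach space is automatically $C^\infty$, its successive derivatives being the associated bounded symmetric multilinear forms (whose boundedness follows from that of the polynomial via the polarization inequality), one concludes $N\in C^\infty(\E)$ with the stated derivatives.

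For the local term, I would factor $P = J\circ\iota$, where $\iota\colon\E\hookrightarrow L^p(\R^3)$ is the continuous embedding of Lemma~\ref{tE36} (valid for $p\in[3,6]$; in the radial case one uses instead Lemma~\ref{tEr187}, valid for $p\in(18/7,6]$) and $J(w)=\frac1p\|w\|_p^p$ is the standard functional on $L^p(\R^3)$. Since $p\ge 2$ in both ranges, $J\in C^2(L^p)$ with $J'(w)[h]=\int_{\R^3}|w|^{p-2}w\,h\,dx$ and $J''(w)[h,k]=(p-1)\int_{\R^3}|w|^{p-2}h\,k\,dx$, the continuity of $J''$ resting on the continuity of the superposition operator $w\mapsto|w|^{p-2}$ from $L^p$ into $L^{p/(p-2)}$ together with Hölder's inequality. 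By the chain rule, $P\in C^2(\E)$ with $P'(u)[v]=\int_{\R^3}|u|^{p-2}u\,v\,dx$. Summing the three contributions yields $I_a\in C^2(\E)$ and the announced expression for $I_a'$; the same argument applies verbatim on $\E_r$ in the range $p\in(18/7,6]$.

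I would expect the main obstacle to be the verification of the $C^2$ (as opposed to merely $C^1$) regularity, which splits into two delicate points. For the local term, continuity of the second derivative forces $p\ge2$ and relies on continuity of the superposition operator $w\mapsto|w|^{p-2}$, which is precisely what dictates the lower endpoint of the admissible range. For the nonlocal term, the subtlety is that $\E$ only embeds into $L^\tau$ for $\tau\ge3$, so one cannot bound $V(u^2,u^2)$ by a naive Hardy--Littlewood--Sobolev estimate through $L^{12/5}$; instead one must exploit the definition of the norm together with the Cauchy--Schwarz structure of $V$ furnished by Lemma~\ref{prop:main}, or equivalently the polynomial/polarization viewpoint.
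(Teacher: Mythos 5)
The paper does not prove this lemma at all: it is imported verbatim from \cite[Proposition 5.2]{CDPSY25}, so there is no internal argument to compare against. Your proof is correct and is the standard route one would expect the cited reference to follow: the Dirichlet term is a bounded quadratic form; the quartic term is a bounded homogeneous polynomial (boundedness coming straight from the definition of $\|\cdot\|_\E$ together with the Cauchy--Schwarz inequality of Lemma~\ref{prop:main} and the pointwise bound $|uv|\le\tfrac12(u^2+v^2)$, which is exactly the right substitute for the Hardy--Littlewood--Sobolev estimate that is unavailable in the zero-mass setting), hence $C^\infty$ by polarization; and the $L^p$ term is handled through the embeddings of Lemmas~\ref{tE36} and~\ref{tEr187} and the $C^2$ regularity of $w\mapsto\|w\|_p^p$ for $p>2$, which is where the two ranges $[3,6]$ and $(18/7,6]$ enter. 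The computed first derivative matches the stated formula since $V(u^2,uv)=\int_{\R^3}\phi_u uv\,dx$. I see no gap.
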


\begin{cor}(\cite[Corollary 5.3]{CDPSY25})\label{ws}
	Let $p\in[3,6]$ (resp. $p\in(18/7,6]$) and $u\in \E$ (resp. $u\in \E_r$). Then $(u,\phi_u)$   is a weak solution of the system  \eqref{eq1.1} if and only if $I_a'(u)=0$. 	
\end{cor}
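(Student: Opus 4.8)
The plan is to reduce the stated equivalence to the two structural lemmas already at hand: the uniqueness characterization of $\phi_u$ in Lemma \ref{thmE} and the explicit formula for $I_a'$ in Lemma \ref{IC1}. First I would fix what it means for a pair $(u,\phi)\in\E\times\A$ to be a weak solution of \eqref{eq1.1}: namely that $\phi$ weakly solves $-\Delta\phi+a^2\Delta^2\phi=4\pi u^2$ (tested against $\psi\in C_c^\infty(\RT)$, equivalently against all of $\A$ by density), and that $u$ weakly solves $-\Delta u+q^2\phi u=|u|^{p-2}u$ (tested against $v\in C_c^\infty(\RT)$). The whole argument then splits into the two implications, each of which is essentially a matching of weak formulations.

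For the forward implication, suppose $I_a'(u)=0$. By Lemma \ref{thmE}, $\phi_u\in\A$ is the unique weak solution of the second equation, so the pair $(u,\phi_u)$ satisfies that equation automatically. For the first equation, Lemma \ref{IC1} gives, for every $v\in\E$,
$$0=I_a'(u)[v]=\irt \n u\cdot\n v\,dx+q^2\irt\phi_u u v\,dx-\irt|u|^{p-2}uv\,dx,$$
which is exactly the weak formulation of $-\Delta u+q^2\phi_u u=|u|^{p-2}u$ tested against $v$. Hence $(u,\phi_u)$ is a weak solution of \eqref{eq1.1}. For the converse, suppose $(u,\phi)$ is a weak solution. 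Since $\phi$ solves the second equation weakly and, by Lemma \ref{thmE}, $\phi_u$ is the \emph{unique} such weak solution in $\A$, we must have $\phi=\phi_u$. Substituting into the weak form of the first equation and reading the identity above backwards yields $I_a'(u)[v]=0$ for all admissible test functions $v$, that is $I_a'(u)=0$.

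The step requiring the most care is the equivalence of the admissible test-function classes: the derivative formula of Lemma \ref{IC1} is stated for $v\in\E$, whereas the natural weak formulation of the PDE uses $v\in C_c^\infty(\RT)$. The bridge is the density result Lemma \ref{density}, combined with the continuity of the bounded linear functional $v\mapsto I_a'(u)[v]$ on $\E$, which itself follows from the $C^1$ regularity in Lemma \ref{IC1} together with the embeddings of Lemmas \ref{tE36} and \ref{tEr187}. These two facts guarantee that vanishing on the dense subset $C_c^\infty(\RT)$ is equivalent to vanishing on all of $\E$, so that the two notions of "weak solution of the first equation" and "$I_a'(u)=0$" genuinely coincide.

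Finally, the radial case $p\in(18/7,6]$ is handled verbatim, replacing $\E,\A$ by $\E_r,\A_r$ throughout and invoking the radial versions of the same lemmas (in particular the $\E_r$ portion of Lemma \ref{IC1} and the compact embeddings of Lemma \ref{tEr187}). No new analytic input is needed beyond what has already been recorded, so I expect the proof to be short and mechanical once the weak-solution definition and the density argument are spelled out.
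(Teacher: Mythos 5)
Your two main implications are correct, and they follow the only sensible route: the paper itself imports this corollary from \cite[Corollary 5.3]{CDPSY25} without reproducing a proof, and the intended mechanism is precisely your matching of weak formulations, i.e.\ Lemma \ref{thmE} for the second equation (with uniqueness of $\phi_u$ in $\A$), the derivative formula of Lemma \ref{IC1} for the first, and the density Lemma \ref{density} plus continuity of $v\mapsto I_a'(u)[v]$ to reconcile the test-function classes $C_c^\infty(\RT)$ and $\E$. One minor remark: the statement fixes the pair as $(u,\phi_u)$, so your uniqueness step in the converse (showing an arbitrary $\phi$ must equal $\phi_u$) is not needed, though it is harmless and proves a slightly stronger assertion.

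The genuine gap is in your final paragraph: the radial case is \emph{not} handled verbatim, and this matters most in the regime $p\in(18/7,3)$, which is the whole reason the corollary carries a separate radial statement. Two points are skipped. First, the density lemma you invoke is stated for $\E$, not $\E_r$; to pass between testing on $C_c^\infty$ and testing on all of $\E_r$ you need density of \emph{radial} $C_c^\infty$ functions in $\E_r$ (plausible, e.g.\ because mollification and truncation preserve radial symmetry, but it must be justified, not inherited). Second, and more substantively, for $p\in(18/7,3)$ the functional $I_a$ is defined only on $\E_r$, so $I_a'(u)=0$ yields $\int_{\RT}\nabla u\cdot\nabla v\,dx+q^2\int_{\RT}\phi_u uv\,dx=\int_{\RT}|u|^{p-2}uv\,dx$ only for \emph{radial} $v$; if a weak solution is required to satisfy this against every $\varphi\in C_c^\infty(\RT)$, an additional ingredient is needed. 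The principle of symmetric criticality does not apply off the shelf, since $I_a$ admits no extension to $\E$ for $p<3$; what does work is the elementary observation that $T:=-\Delta u+q^2\phi_u u-|u|^{p-2}u$ is a well-defined, rotation-invariant distribution (using $\phi_u\in\A\hookrightarrow L^\infty(\RT)$ and the embeddings of Lemma \ref{tEr187}), so that $T[\varphi]$ equals $T$ applied to the spherical average of $\varphi$, which vanishes because that average is a radial $C_c^\infty$ function lying in $\E_r$. Without this step (or an equivalent one), the radial equivalence for $p\in(18/7,3)$ does not follow from your nonradial argument.
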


This section concludes with some auxiliary lemmas. We define $T:\E^4\to \R$ by
$$T(u,v,\omega,z):=\int_{\R^3}\int_{\R^3}\K_a(x-y)u(x)v(x)\omega(y)z(y)dxdy.
$$ 
Clearly, $T$ is linear and continuous with respect to each variable, where the continuity follows from H\"older inequality and Lemma \ref{prop:main}. 

Analogously to \cite[Lemma 2.3]{IR12}, we have that
\begin{lem}\label{lem1}
	Assume that there exist three weakly convergent sequences in $\E$, namely, $u_n\weakto u$, $v_n\weakto v$, $\omega_n\weakto \omega$, and $z\in\E$. Then 
	$$T(u_n,v_n,\omega_n,z)\to T(u,v,\omega,z).$$
\end{lem}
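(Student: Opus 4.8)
The plan is to prove the convergence $T(u_n,v_n,\omega_n,z)\to T(u,v,\omega,z)$ by exploiting the fact that the fourth slot is fixed, which lets me bundle $\omega_n$ and $z$ into a single auxiliary function and reduce the quadrilinear limit to a bilinear one. Concretely, I would introduce for each $n$ the potential $\psi_n(x):=\int_{\R^3}\K_a(x-y)\omega_n(y)z(y)\,dy=(\K_a*(\omega_n z))(x)$, so that
\[
T(u_n,v_n,\omega_n,z)=\int_{\R^3}\psi_n(x)\,u_n(x)v_n(x)\,dx.
\]
The strategy then splits into two independent tasks: first show that the ``potential'' part $\psi_n$ converges in a suitable sense to $\psi:=\K_a*(\omega z)$, and second show that the ``density'' part $u_n v_n$ converges to $uv$ in a dual sense, and finally combine the two.

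First I would handle the density part. Since $u_n\weakto u$ and $v_n\weakto v$ in $\E$, by Lemma \ref{tE36} the embedding $\E\hookrightarrow L^\tau(\R^3)$ for $\tau\in[3,6]$ gives boundedness of $\{u_n\},\{v_n\}$ in these $L^\tau$ spaces, and passing to the radial subspace $\E_r$ (or using local compactness of $D^{1,2}$ into $L^\tau_{\mathrm{loc}}$ in the general case) yields $u_nv_n\weakto uv$ in, say, $L^{3/2}(\R^3)$, the natural space dual to $L^3$. Second, for the potential part, I would use the Cauchy–Schwarz type inequality of Lemma \ref{prop:main} together with the fact that $\K_a*(\cdot)$ maps the relevant mass space into the right target space; the key observation is that $\omega_n\weakto\omega$ and $z$ being fixed means $\omega_n z$ is controlled and $\psi_n=\K_a*(\omega_n z)$ converges weakly to $\psi$, while $z$ fixed provides the needed compactness/tightness to upgrade a product to genuine convergence. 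The heart of the argument is then the bilinear pairing: writing
\[
T(u_n,v_n,\omega_n,z)=T(u,v,\omega_n,z)+\bigl[T(u_n,v_n,\omega_n,z)-T(u,v,\omega_n,z)\bigr],
\]
I would show the first term tends to $T(u,v,\omega,z)$ by the weak convergence $\omega_n\weakto\omega$ (with $u,v,z$ all fixed, this is just continuity of $T$ in its third argument combined with weak convergence), and the bracketed error tends to $0$ because the family of functionals $w\mapsto T(w,\cdot,\omega_n,z)$ is equicontinuous in $n$ thanks to the uniform bound $\|\omega_n\|_\E\le M$ and the continuity estimate $|T(u,v,\omega,z)|\le C\|u\|_\E\|v\|_\E\|\omega\|_\E\|z\|_\E$ noted just before the lemma.

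The step I expect to be the main obstacle is controlling the error term $T(u_n,v_n,\omega_n,z)-T(u,v,\omega_n,z)$ uniformly in $n$: weak convergence $u_n\weakto u$, $v_n\weakto v$ alone does not pass to the nonlinear product $u_nv_n$, so I need genuine strong or dual-strong convergence of $u_nv_n$ against the sequence of test functions $\psi_n=\K_a*(\omega_n z)$ \emph{while $\psi_n$ itself is moving}. The clean way around this is to freeze the slow-varying piece: since $z$ is fixed, the maps $\omega\mapsto \K_a*(\omega z)$ send a weakly convergent sequence to a weakly convergent sequence of potentials, and I can test $u_nv_n-uv$ (which tends to $0$ weakly in $L^{3/2}$, and locally strongly by Lemma \ref{tEr187} in the radial case) against a \emph{fixed} limiting potential $\psi$, absorbing the discrepancy $\psi_n-\psi$ into a term bounded by $\|u_nv_n\|_{L^{3/2}}\|\psi_n-\psi\|_{L^3}$ that vanishes. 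Mirroring the proof of \cite[Lemma 2.3]{IR12}, a diagonal/tightness argument controlling the tails at infinity — where the only available compactness in the non-radial case fails — is what makes the estimate go through, and this tail control is where the structural properties of $\K_a$ and the finiteness of the Bopp–Podolsky energy $V$ from Lemma \ref{prop:main} must be used carefully.
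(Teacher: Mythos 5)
Your reduction of the statement to the convergence of $\int_{\RT}\psi_n\,u_nv_n\,dx$ with $\psi_n=\K_a*(\omega_nz)$ is a reasonable start, and the term $T(u,v,\omega_n,z)\to T(u,v,\omega,z)$ is indeed harmless --- but only because $T$ is \emph{linear} and bounded in its third slot (via the iterated Cauchy--Schwarz inequality of Lemma~\ref{prop:main}), hence weakly continuous there; ``norm continuity plus weak convergence'' would not suffice for a nonlinear map. The genuine gaps are in the error term. First, ``equicontinuity'' only bounds $T(u_n,v_n,\omega_n,z)-T(u,v,\omega_n,z)$ by $C\big(\|u_n-u\|_{\E}+\|v_n-v\|_{\E}\big)$, and these norms do \emph{not} tend to zero under weak convergence, so this argument proves nothing. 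Second, the fallback estimate $\|u_nv_n\|_{L^{3/2}}\|\psi_n-\psi\|_{L^3}\to 0$ is unavailable on two counts: $\psi_n=\K_a*(\omega_nz)$ is controlled only in $\A\hookrightarrow L^\tau(\RT)$ for $\tau\in[6,+\infty]$ (equivalently in $D^{1,2}\hookrightarrow L^6$), never in $L^3$ --- indeed the available exponents for the potential and the dual exponents of $u_nv_n\in L^\tau$, $\tau\in[3/2,3]$, do not meet, which is precisely why the energy $V$ and the space $\E$ are introduced in place of Lebesgue bounds --- and, even granting membership in $L^3$, weak convergence of $\omega_n$ could never yield \emph{norm} convergence of $\psi_n$. (Also, you cannot ``pass to the radial subspace'': the lemma is stated in $\E$.)

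What is missing is the mechanism that converts weak convergence into strong convergence, and it comes from the fixed function $z$: by Lemma~\ref{density} and the uniform bound $|T(u_n,v_n,\omega_n,h)|\le \|u_n\|_\E\|v_n\|_\E\|\omega_n\|_\E\|h\|_\E$ one first reduces to $z\in C_c^\infty(\RT)$ with support $K$. Then the potential should be bundled on the \emph{other} side: $\chi_n:=\K_a*(u_nv_n)$ satisfies $\|\chi_n\|_{\A}^2=4\pi V(u_nv_n,u_nv_n)\le C$ by (the proof of) Lemma~\ref{thmE} together with Lemma~\ref{prop:main}, converges weakly in $\A$ to $\K_a*(uv)$ (identify the limit by testing the equation against $C_c^\infty$ and using $u_nv_n\to uv$ in $L^1_{\rm loc}$, in the spirit of Lemma~\ref{lem3.4}), hence strongly in $L^2(K)$ by the compact local embedding of $D^{1,2}(\RT)$; pairing it with $\omega_nz\to\omega z$ strongly in $L^2(K)$ gives the claim. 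This is the ``minor modification'' of \cite[Lemma 2.3]{IR12} that the paper invokes; your proposal identifies the right difficulty but does not supply the two ingredients (compact-support reduction of $z$, and $\A$-boundedness of the potential generated by $u_nv_n$) that resolve it.
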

\begin{proof}
	Based on Lemma \ref{thmE}, Lemma \ref{lem3.4}, and Lemma \ref{prop:main}, this lemma can be proved by an argument similar to that of \cite[Lemma 2.3]{IR12}, with only minor modifications. Hence, for the sake of brevity, we omit the proof.
	
\end{proof}

Relying on \cite[Lemma I.1]{L84} and Lemma \ref{tE36}, we further obtain the following Vanishing lemma.
\begin{lem}\label{lem3}
	Let $\{u_n\}$ be a bounded sequence in $\E$, $q\in[3,6)$, and assume that
	$$\sup_{y\in \R^3}\int_{B_R(y)}|u_n|^qdx\to0, \quad \text{ for some } R>0.$$
	Then, $u_n\to 0$ in $L^\alpha(\R^3)$ for any $\alpha\in(3,6)$.
\end{lem}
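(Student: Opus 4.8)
The plan is to adapt the classical Lions vanishing lemma (\cite[Lemma I.1]{L84}) to the present functional setting. The core idea is that a bounded sequence which carries no mass concentration at any fixed scale $R$ must vanish in the intermediate Lebesgue norms. I would first fix an exponent $\alpha \in (3,6)$ and exploit an interpolation-type bound on each ball $B_R(y)$ that converts the vanishing of the $L^q$ mass into vanishing of the $L^\alpha$ mass, up to a factor controlled by the $D^{1,2}$ norm on the same ball. Concretely, by Hölder's inequality combined with the Gagliardo–Nirenberg–Sobolev inequality on $B_R(y)$, for a suitable choice of interpolation parameter $\theta$ one has a pointwise-in-$y$ estimate of the shape
\begin{equation*}
\int_{B_R(y)}|u_n|^\alpha\,dx \le C\left(\int_{B_R(y)}|u_n|^q\,dx\right)^{1-\lambda}\left(\int_{B_R(y)}(|\nabla u_n|^2+|u_n|^6)\,dx\right)^{\lambda}
\end{equation*}
for some $\lambda\in(0,1)$ depending on $\alpha$, $q$, and the dimension, valid because $\alpha$ lies strictly between $q$ (or $3$) and $6$.

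Next I would take the supremum over $y$ of the first factor on the right and use the hypothesis that it tends to zero. Summing the local estimate over a countable covering of $\R^3$ by balls of radius $R$ with bounded overlap, the second (gradient-plus-$L^6$) factor integrates to something bounded by the $D^{1,2}$ norm of $u_n$, which is controlled since $\{u_n\}$ is bounded in $\E$ and $\E\hookrightarrow D^{1,2}(\RT)$ by definition of the $\E$ norm. This yields
\begin{equation*}
\|u_n\|_\alpha^\alpha \le C\left(\sup_{y\in\R^3}\int_{B_R(y)}|u_n|^q\,dx\right)^{1-\lambda}\to 0,
\end{equation*}
which is exactly the desired conclusion $u_n\to 0$ in $L^\alpha(\RT)$. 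Here the embedding $\E\hookrightarrow L^\tau(\RT)$ for $\tau\in[3,6]$ from Lemma \ref{tE36} guarantees that all the relevant local integrals are finite and that the full-space $L^6$ and $L^3$ norms (needed to bound the second factor) stay uniformly bounded.

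The main obstacle, and the only genuinely nonroutine point, is verifying that the interpolation inequality applies with the correct endpoints. The standard Lions lemma is typically phrased for sequences bounded in $H^1$, whereas here we only have control of $\nabla u_n$ in $L^2$ (i.e.\ the $D^{1,2}$ norm), not of $u_n$ in $L^2$; this is precisely why the lower endpoint is $L^3$ rather than $L^2$ and why the restriction $q\in[3,6)$ and $\alpha\in(3,6)$ appears. I would therefore be careful to run the local Gagliardo–Nirenberg estimate using the $L^6$ bound coming from Sobolev (rather than an $L^2$ bound) as the high endpoint, and the given $L^q$ control with $q\ge 3$ as the low endpoint, so that $\alpha$ can be reached by interpolation strictly inside the open interval $(3,6)$. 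Once the endpoints are correctly matched to the zero-mass functional framework, the remainder is the verbatim covering-and-supremum argument of Lions, and the conclusion follows.
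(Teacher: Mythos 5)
Your overall strategy is the intended one: the paper gives no proof of this lemma and simply defers to \cite[Lemma 2.4]{IR12}, whose argument is exactly the adaptation of Lions' vanishing lemma you describe, with the $L^2$ lower endpoint (unavailable in the zero-mass setting) replaced by the $L^3$ endpoint coming from $\E\hookrightarrow L^\tau(\RT)$, $\tau\in[3,6]$; you correctly single this out as the only structural change. However, the covering step as you have written it does not close. In your local estimate the exponents $1-\lambda$ and $\lambda$ sum to one with $\lambda\in(0,1)$, so after summing over a bounded-overlap covering $\{B_R(y_i)\}$ you are left with $\sum_i B_i^{\lambda}$, where $B_i:=\int_{B_R(y_i)}(|\nabla u_n|^2+|u_n|^6)\,dx$. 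A sum of $\lambda$-th powers of a summable sequence with $\lambda<1$ need not be finite (take $B_i\sim i^{-1/\lambda}$ over disjoint balls), so the passage to $\|u_n\|_\alpha^\alpha\le C\big(\sup_y\int_{B_R(y)}|u_n|^q\,dx\big)^{1-\lambda}$ is not justified; and you cannot repair this by taking $\lambda\ge1$, since then the first factor carries a nonpositive exponent and the smallness hypothesis is lost.

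The actual Lions--Willem argument avoids this by a two-step structure that your sketch omits. One first works with the single exponent $s=\tfrac{2q}{3}+2\in[4,6)$, for which the interpolation $\|u\|_{L^s(B)}\le\|u\|_{L^q(B)}^{1-\lambda}\|u\|_{L^6(B)}^{\lambda}$ with $\lambda s=2$, combined with the local Sobolev inequality $\|u\|_{L^6(B_R(y))}\le C\big(\|\nabla u\|_{L^2(B_R(y))}+\|u\|_{L^q(B_R(y))}\big)$ (constant independent of $y$), yields
\begin{equation*}
\int_{B_R(y)}|u_n|^{s}\,dx\le C\Big(\int_{B_R(y)}|u_n|^{q}\,dx\Big)^{2/3}\int_{B_R(y)}|\nabla u_n|^2\,dx+C\Big(\int_{B_R(y)}|u_n|^{q}\,dx\Big)^{\frac{2}{3}+\frac{2}{q}}.
\end{equation*}
Here every term either carries the summable quantity $\int_{B}|\nabla u_n|^2\,dx$ to the power exactly one, or carries $\int_B|u_n|^q\,dx$ to a power strictly larger than one; both kinds of terms sum over the covering to at most $C\big(\sup_y\int_{B_R(y)}|u_n|^q\,dx\big)^{\delta}$ for some $\delta>0$, giving $u_n\to0$ in $L^s(\RT)$. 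Only afterwards does one reach every $\alpha\in(3,6)$ by interpolating the global norms against the uniform bounds in $L^3$ and $L^6$; this second step is also where the strict inequality $\alpha>3$ enters. So your approach matches the one the paper relies on, but the displayed inequality and the one-shot summation would fail as stated and must be replaced by this special-exponent-plus-interpolation scheme.
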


\begin{proof}
	This lemma follows by reasoning analogous to the proof of \cite[Lemma 2.4]{IR12}, up to minor changes. For this reason, we skip the proof.

\end{proof}

\section{Existence of ground state solutions}\label{se3}
	In this section, we focus on proving Theorem \ref{Thm2}, that is, the existence of ground state solutions of system \eqref{eq1.1} for the case $p\in (4,6)$. Let us define $M:\E\to \R$ as:
$$M[u]:=\|\nabla u\|_2^2+\int_{\R^3}\int_{\R^3}\K_a(x-y)u^2(x)u^2(y)dxdy.
$$
Just by taking into account the definitions of $M$ and $\|\cdot\|_{\E}$, we can easily check that for any $u\in \E$
\begin{equation}\label{neweq1}
	\frac{1}{2}\|u\|_{\E}^4\le M[u]\le \|u\|_{\E}^2, \quad \text{ if either } \|u\|_{\E}\le 1 \text{ or } M[u]\le1.
\end{equation}

We now turn our attention to the functional $I_a$, and show that it satisfies the geometric properties of the Mountain Pass Theorem.
\begin{lem}\label{MP}
Let $p\in (4,6)$. Then we have that 
	\begin{enumerate}[label=(\roman*),ref=\roman*]
\item \label{i} $I_a(0)=0$;
\item \label{ii} there exist $\delta,\rho>0$ such that $I_a(u)>\delta$ for all $u\in \E$ with $\|u\|_{\E}=\rho$;
\item \label{iii} there exists $\omega\in\E$ with  $\|\omega\|_{\E}>\rho$ such that $I_a(\omega)<0$.
	\end{enumerate}
\end{lem}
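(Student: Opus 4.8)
The plan is to verify the three Mountain Pass conditions in order, exploiting the decomposition of $I_a$ into its kinetic, nonlocal, and nonlinear parts together with the embedding $\E\hookrightarrow L^\tau(\RT)$ for $\tau\in[3,6]$ from Lemma~\ref{tE36} and the auxiliary bound \eqref{neweq1}.

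Part \eqref{i} is immediate: substituting $u=0$ into the definition of $I_a$ gives $I_a(0)=0$, since each of the three terms vanishes.

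For part \eqref{ii}, I would work on a small ball $\|u\|_\E=\rho\le 1$, so that \eqref{neweq1} applies and yields $\|\nabla u\|_2^2 + V(u^2,u^2)=M[u]\ge \tfrac12\|u\|_\E^4 = \tfrac12\rho^4$. Since the nonlocal term $\tfrac{q^2}{4}V(u^2,u^2)$ is nonnegative, I can simply drop it from below and write
\[
I_a(u)\ge \frac{1}{2}\|\nabla u\|_2^2 - \frac{1}{p}\|u\|_p^p.
\]
By Lemma~\ref{tE36} the embedding constant gives $\|u\|_p\le C\|u\|_\E$, hence $\|u\|_p^p\le C^p\rho^p$. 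The remaining point is to control $\|\nabla u\|_2^2$ from below in terms of $\rho$. This is the one subtle spot: $\|\nabla u\|_2^2$ alone need not be comparable to $\|u\|_\E^4$, because $\|u\|_\E^4$ could be dominated by the nonlocal part $V(u^2,u^2)$. I would handle this by a case distinction. If $\|\nabla u\|_2^2\ge \tfrac14\rho^4$ (say), then $I_a(u)\ge \tfrac18\rho^4 - \tfrac{C^p}{p}\rho^p$, which is positive for $\rho$ small since $p>4$. In the complementary case $\|\nabla u\|_2^2< \tfrac14\rho^4$, the estimate $M[u]\ge\tfrac12\rho^4$ forces $V(u^2,u^2)\ge\tfrac14\rho^4$; one then uses the Cauchy--Schwarz-type bound of Lemma~\ref{prop:main} or direct interpolation to control $\|u\|_p$ by $V(u^2,u^2)$ and $\|\nabla u\|_2$ and recover a favourable lower bound. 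Choosing $\rho$ small enough and setting $\delta$ to be the resulting positive quantity completes \eqref{ii}.

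For part \eqref{iii}, I would fix any $u\in\E\setminus\{0\}$ and examine the scaling $I_a(tu)$ as $t\to+\infty$. Here
\[
I_a(tu)=\frac{t^2}{2}\|\nabla u\|_2^2 + \frac{q^2 t^4}{4}V(u^2,u^2) - \frac{t^p}{p}\|u\|_p^p.
\]
Since $p>4$, the negative term grows like $t^p$ and eventually dominates both the $t^2$ and $t^4$ terms, so $I_a(tu)\to-\infty$ as $t\to+\infty$. I would then choose $t$ large enough that simultaneously $\|tu\|_\E>\rho$ and $I_a(tu)<0$, and set $\omega:=tu$. The main obstacle in the whole lemma is the lower-bound argument in \eqref{ii}: because $\|u\|_\E$ involves $V(u^2,u^2)^{1/2}$ rather than a quadratic gradient quantity, the norm is not a Hilbert norm and the geometry near the origin must be read off through the hybrid inequality \eqref{neweq1}, which is precisely why the restriction $p>4$ (matching the quartic growth of the nonlocal term) is needed for the mountain-pass structure to close.
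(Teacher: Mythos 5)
Parts \eqref{i} and \eqref{iii} are fine and coincide with the paper's argument: for \eqref{iii} the paper uses exactly the scaling $u_t=tu$ and lets $t\to+\infty$, where the $t^p$ term dominates because $p>4$.

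In part \eqref{ii} your overall structure is sound, but the step you flag as ``the one subtle spot'' is handled in a way that, as written, does not close. Having dropped the nonnegative nonlocal term, you reduce to $I_a(u)\ge \frac12\|\nabla u\|_2^2-\frac1p\|u\|_p^p$, and in the complementary case $\|\nabla u\|_2^2<\frac14\rho^4$ this lower bound can be arbitrarily close to $-\frac{C^p}{p}\rho^p$; you then propose to ``control $\|u\|_p$ by $V(u^2,u^2)$ and $\|\nabla u\|_2$'' via Lemma~\ref{prop:main} or interpolation, but that is not the relevant repair (the bound $\|u\|_p\le C\rho$ from Lemma~\ref{tE36} is already as good as it gets, and no control of $\|u\|_p$ will rescue a lower bound from which the only term that is large in this case has been discarded). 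The correct fix is simply not to drop the nonlocal term: since $q\ne0$,
\[
I_a(u)\ \ge\ \frac12\|\nabla u\|_2^2+\frac{q^2}{4}V(u^2,u^2)-\frac{C}{p}\|u\|_{\E}^p\ \ge\ \min\Big\{\frac12,\frac{q^2}{4}\Big\}\,M[u]-\frac{C}{p}\|u\|_{\E}^p\ \ge\ \frac{1}{2}\min\Big\{\frac12,\frac{q^2}{4}\Big\}\rho^4-\frac{C}{p}\rho^p
\]
for $\|u\|_{\E}=\rho\le1$, by \eqref{neweq1}, and the right-hand side is positive for $\rho$ small because $p>4$. This is exactly the paper's one-line estimate, and it makes your case distinction unnecessary; alternatively, your dichotomy does work provided that in the second case you retain $\frac{q^2}{4}V(u^2,u^2)\ge\frac{q^2}{16}\rho^4$ rather than appealing to Lemma~\ref{prop:main}. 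Your closing remark correctly identifies why $p>4$ is the natural threshold here.
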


\begin{proof}
Property \eqref{i} is trivial. 

Moreover, by Lemma \ref{tE36}, it follows that 
\begin{equation*}\label{eq7.1}
\begin{aligned}
I_a(u) 
&\ge\min\left\{\frac{1}{2},\frac{q^2}{4}\right\}M[u]-\frac{C}{p}\|u\|_{\E}^p,
\end{aligned}
\end{equation*}	
which, together with \eqref{neweq1}, indicates that \eqref{ii} holds if we take $\|u\|_{\E}=\rho>0$ small enough.

To prove \eqref{iii}, we let $u_t:=t u$ with $u\in \E$ and $t\ge 0$. This yields that
\begin{equation}\label{eq7.2}
	\begin{aligned}
		I_a(u_t)
		& =\frac{t^2}{2}\|\nabla u\|_2^2+\frac{q^2t^4}{4}\int_{\R^3}\int_{\R^3}\K_a(x-y)u^2(x)u^2(y)dxdy-\frac{t^{p}}{p}\|u\|_p^p.
	\end{aligned}
\end{equation}	
Therefore, the desired results follow considering $t\to +\infty$. 
	
\end{proof}

In virtue of Lemma \ref{MP} and Mountain Pass Theorem, we define the Mountain Pass level for $I_a$ as follows: 
\begin{equation}\label{eq7.15}
	c_a:=\inf_{\gamma\in \Gamma}\max_{t\in[0,1]}I_a(\gamma(t))>0,
\end{equation}
where $\Gamma:=\{\gamma \in C([0,1],\E):\gamma(0)=0, I_a(\gamma(1))<0\}$. Then there exists a Palais-Smale (PS) sequence $\{u_n\}\subset\E$ for $I_a$ at level $c_a$ such that, as $n\to +\infty$,
\begin{equation}\label{eq7.9}
	I_a(u_n)\to c_a ~\text{ and }~ I_a'(u_n)\to 0.
\end{equation}
 
Our intention is to find a critical point at level $c_a$. By the next proposition, this weak solution of system \eqref{eq1.1} will be a ground state, namely, a solution with minimal energy among all nontrivial weak solutions.

\begin{prop}\label{prop1}
	Suppose $p\in (4,6)$ and $u\in \E\backslash\{0\}$ is a weak solution of system \eqref{eq1.1}. Then it holds  
	$$I_a(u)\ge c_a.$$
\end{prop}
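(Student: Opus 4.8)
The goal is to show that any nontrivial critical point of $I_a$ has energy at least $c_a$. Let me think about the structure here. We have a mountain-pass level $c_a$ defined via paths, and we want to show that any nontrivial weak solution $u$ sits above this level. The standard strategy for this kind of statement is to exhibit, for the given solution $u$, a specific admissible path in $\Gamma$ along which the maximum of $I_a$ equals exactly $I_a(u)$. Then by the definition of $c_a$ as an infimum over all paths, we immediately get $c_a \le I_a(u)$.

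The plan is to use the scaling path $t \mapsto tu$ already appearing in equation \eqref{eq7.2}. Concretely, along $u_t := tu$ one has
\[
I_a(u_t) = \frac{t^2}{2}\|\nabla u\|_2^2 + \frac{q^2 t^4}{4}\irt\irt \K_a(x-y)u^2(x)u^2(y)\,dxdy - \frac{t^p}{p}\|u\|_p^p =: h(t).
\]
The first step is to analyze the real function $h(t)$ on $[0,\infty)$. Since $p \in (4,6)$, the negative term $-t^p/p$ dominates for large $t$, so $h(t) \to -\infty$; also $h(0)=0$ and $h'(0)=0$ with $h$ initially positive near $0$. The crucial point is that because $u$ is a critical point, $I_a'(u)[u]=0$, which by Lemma \ref{IC1} reads
\[
\|\nabla u\|_2^2 + q^2\,V(u^2,u^2) - \|u\|_p^p = 0,
\]
and this is exactly the condition $h'(1)=0$. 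So $t=1$ is a critical point of $h$. I would then argue that $t=1$ is in fact the unique maximum point of $h$ on $(0,\infty)$: computing $h'(t) = t\|\nabla u\|_2^2 + q^2 t^3 V(u^2,u^2) - t^{p-1}\|u\|_p^p$, one checks that $h'(t)/t = \|\nabla u\|_2^2 + q^2 t^2 V(u^2,u^2) - t^{p-2}\|u\|_p^p$; using $p-2>2$ together with the solution identity to eliminate $\|u\|_p^p$, one finds $h'(t)$ changes sign exactly once, from positive to negative, so $h$ attains its strict global maximum at $t=1$, giving $\max_{t\ge 0} h(t) = h(1) = I_a(u)$.

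Once $h$ is understood, I would construct the path. Choose $T$ large enough that $h(T) < 0$ (possible since $h \to -\infty$), and define $\gamma(s) := u_{sT} = sT\,u$ for $s \in [0,1]$. Then $\gamma \in C([0,1],\E)$, $\gamma(0)=0$, and $I_a(\gamma(1)) = h(T) < 0$, so $\gamma \in \Gamma$. Moreover
\[
\max_{s\in[0,1]} I_a(\gamma(s)) = \max_{s\in[0,1]} h(sT) \le \max_{t\ge 0} h(t) = h(1) = I_a(u).
\]
By the definition \eqref{eq7.15} of $c_a$ as the infimum over $\Gamma$, this yields $c_a \le I_a(u)$, which is the claim.

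The only genuinely delicate point I anticipate is establishing that $t=1$ is the \emph{unique} maximizer of $h$, i.e.\ the monotonicity/sign-change argument for $h'$. This is where the homogeneity structure of the three terms (degrees $2$, $4$, $p$) and the constraint $p>4$ matter: the convexity-type comparison among exponents $2 < 4 < p$ guarantees a single sign change, and it is precisely this that forces $p>4$ rather than merely $p>2$. I would make this rigorous by factoring $h'(t)=t\,\psi(t)$ with $\psi(t)=\|\nabla u\|_2^2 + q^2 t^2 V(u^2,u^2) - t^{p-2}\|u\|_p^p$ and showing $\psi$ has a unique positive zero at $t=1$; since $u$ is nontrivial, $\|\nabla u\|_2^2>0$ and $\|u\|_p^p>0$ (the latter forced by the solution identity), so $\psi(0^+)>0$, $\psi(t)\to -\infty$, and $\psi$ is strictly decreasing wherever it is near its root because the subtracted term grows faster, giving uniqueness. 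Everything else is a routine verification that the scaling path lies in $\E$ and is continuous, which follows from $u\in\E$ and continuity of each term of $I_a$.
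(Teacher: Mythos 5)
Your proposal is correct and follows essentially the same route as the paper: the authors also use the ray $t\mapsto tu$, observe that $I_a'(u)[u]=0$ forces $f'(1)=0$, show via the factorization $f'(t)=t\,g(t)$ and the sign change of $g'(t)=t\bigl(2q^2V(u^2,u^2)-(p-2)t^{p-4}\|u\|_p^p\bigr)$ (this is where $p>4$ enters) that $t=1$ is the unique maximizer, and then reparametrize the truncated ray into an admissible path in $\Gamma$. The only difference is cosmetic: where you argue somewhat informally that $\psi$ "is strictly decreasing near its root," the paper makes the unimodality of $g$ explicit by computing $g'$, which is the cleaner way to finish that step.
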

 
\begin{proof}
	Given such solution $u\in \E\setminus \{0\}$, let us define again $u_t:=tu$ with $t\ge 0$, and $\bar{\gamma}:\R^+\to \E$, $\bar{\gamma}(t)=u_t$. Clearly, $\bar{\gamma}(0)=0$ and $\bar{\gamma}$ is a continuous curve in $\E$. 
	Moreover, by \eqref{eq7.2}, we have that
	\begin{equation*}
		\begin{aligned}
			f(t):=I_a(u_t) =\frac{t^2}{2}\|\nabla u\|_2^2+\frac{q^2t^4}{4}\int_{\R^3}\int_{\R^3}\K_a(x-y)u^2(x)u^2(y)dxdy-\frac{t^{p}}{p}\|u\|_p^p.
		\end{aligned}
	\end{equation*}
	It is easy to check that $f$ is of class $C^1$ and has a unique critical point that corresponds to its maximum by noting the following facts:
	\begin{equation}\label{eq7.29}
		\begin{aligned}
			f'(t)=t\left(\|\nabla u\|_2^2+q^2t^2\int_{\R^3}\int_{\R^3}\K_a(x-y)u^2(x)u^2(y)dxdy-t^{p-2}\|u\|_p^p\right)=:tg(t),
	\end{aligned}
\end{equation}
and 
\begin{equation*}
	\begin{aligned}
		g'(t)=t\left(2q^2\int_{\R^3}\int_{\R^3}\K_a(x-y)u^2(x)u^2(y)dxdy-(p-2)t^{p-4}\|u\|_p^p\right).
	\end{aligned}
\end{equation*}
	Recalling now that $u$ is a nontrivial weak solution (i.e., $I_a'(u)[u]=0$),
	we deduce from \eqref{eq7.29} and Lemma \ref{IC1}  that 
	$$f'(1)=\|\nabla u\|_2^2+q^2\int_{\R^3}\int_{\R^3}\K_a(x-y)u^2(x)u^2(y)dxdy-\|u\|_p^p=I_a'(u)[u]=0.$$
	Therefore,  
	\begin{equation}\label{eq7.20}
		\max_{t\in \R^+}I_a(\bar{\gamma}(t))=\max_{t\in \R^+}I_a(u_t)=I_a(u).
	\end{equation}

	On the other hand, being $\lim\limits_{t\to +\infty}f(t)=-\infty$,  we can take a constant $T_0>0$ such that 
	$$I_a(\bar{\gamma}(T_0))<0.$$ 
	Reparametrizing $\gamma:[0,1]\to \E$ by $\gamma(t):=\bar{\gamma}(T_0t)$, we obtain that $\gamma\in \Gamma$. Thus, we derive from \eqref{eq7.15} and \eqref{eq7.20} that  
	$$c_a\le \max_{t\in [0,1]}I_a(\gamma(t))=\max_{t\in [0,1]}I_a(\bar{\gamma}(T_0t))\le \max_{t\in \R^+}I_a(\bar{\gamma}(t))=I_a(u).$$
	This completes the proof.
	
\end{proof}
 
In addition, we derive the following estimates concerning the nontrivial weak solutions of system \eqref{eq1.1}.
 \begin{lem}\label{lem5}
 	Let $p\in (4,6)$. There exists a constant $C>0$ such that $$M[u]\ge C,$$ for any $u\in \E$ nontrivial weak solution of system \eqref{eq1.1}.
 \end{lem}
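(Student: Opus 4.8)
The plan is to exploit the Nehari-type constraint that every weak solution satisfies, and then to play it against the embedding of $\E$ and the elementary comparison \eqref{neweq1}; the hypothesis $p>4$ will enter in an essential way at the very last step.

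First I would record the identity coming from $I_a'(u)[u]=0$. By Lemma \ref{IC1}, a nontrivial weak solution $u$ satisfies
$$\|\nabla u\|_2^2 + q^2 V(u^2,u^2) = \|u\|_p^p,$$
with $V(u^2,u^2)=\int_{\R^3}\int_{\R^3}\K_a(x-y)u^2(x)u^2(y)\,dx\,dy\ge0$. Comparing the left-hand side coefficient by coefficient with $M[u]=\|\nabla u\|_2^2+V(u^2,u^2)$ gives the lower bound $\|u\|_p^p \ge \min\{1,q^2\}\,M[u]$. Note that $u\neq0$ forces $\|\nabla u\|_2^2>0$ (via $\|u\|_6\le C\|\nabla u\|_2$), so $M[u]>0$ and the argument that follows is not vacuous.

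Next I would produce a matching upper bound for $\|u\|_p^p$ in terms of $M[u]$, splitting into two cases. If $M[u]\ge 1$ there is nothing to prove, so assume $M[u]\le 1$. Then \eqref{neweq1} is available and, reading $\tfrac12\|u\|_\E^4\le M[u]$, yields $\|u\|_\E\le(2M[u])^{1/4}$. Since $p\in(4,6)\subset[3,6]$ in the full space (resp. $(18/7,6]$ in the radial case), the continuous embedding of Lemma \ref{tE36} (resp. Lemma \ref{tEr187}) gives $\|u\|_p\le C_p\|u\|_\E$, whence $\|u\|_p^p \le C_p^p\,2^{p/4}\,M[u]^{p/4}$.

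Finally I would combine the two estimates into $\min\{1,q^2\}\,M[u]\le C_p^p\,2^{p/4}\,M[u]^{p/4}$, divide by $M[u]>0$, and invert. Because $p>4$ the exponent $\tfrac{p}{4}-1=\tfrac{p-4}{4}$ is strictly positive, so this rearranges to the explicit lower bound $M[u]\ge C_1:=\big(\min\{1,q^2\}/(C_p^p 2^{p/4})\big)^{4/(p-4)}$; taking $C=\min\{1,C_1\}$ then covers both cases. I do not expect a genuine obstacle here: the computation is routine once \eqref{neweq1} is in hand. The one point to watch is that the entire scheme hinges on $p>4$, so that $\tfrac p4>1$ and the combined inequality inverts into a \emph{lower} bound rather than an upper bound on $M[u]$; this mirrors precisely the restriction flagged in the remark following Theorem \ref{Thm2}. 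A secondary subtlety is correctly propagating the quartic power in \eqref{neweq1}, which is why the key estimate carries $M[u]^{p/4}$ rather than $M[u]^{p/2}$.
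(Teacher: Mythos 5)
Your proposal is correct and follows essentially the same route as the paper's own proof: the Nehari identity $I_a'(u)[u]=0$ gives $\min\{1,q^2\}M[u]\le\|u\|_p^p$, the embedding $\E\hookrightarrow L^p$ together with \eqref{neweq1} (in the case $M[u]\le1$) gives $\|u\|_p^p\le C\,M[u]^{p/4}$, and $p>4$ lets you invert. The only difference is that you spell out the final algebra and the positivity of $M[u]$ explicitly, which the paper leaves implicit.
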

 
 \begin{proof}
 Let $u\in \E$ be a nontrivial weak solution of system \eqref{eq1.1}. Without loss of generality we can suppose that $M[u]\le 1$. Hence by \eqref{neweq1} and Lemma \ref{tE36}, we have
 	\begin{equation*}
 		\begin{aligned}
 			\min\{1,q^2\}M[u]
 			\le \|\nabla u\|_2^2+q^2\int_{\R^3}\int_{\R^3}\K_a(x-y)u^2(x)u^2(y)dxdy
 			=\|u\|_p^p\le C\|u\|_{\E}^p,
 		\end{aligned}
 	\end{equation*}
which concludes the proof. 	

 \end{proof}

The next lemma describes the behaviour of (PS) sequences.
\begin{lem}\label{lem2}
	For any $p\in(4,6)$, let $\{u_n\}\subset\E$ be a (PS) sequence for $I_a$  at level $c$. Then the sequence $\{u_n\}$ is bounded in $\E$, and its weak limit in $\E$ is a critical point of $I_a$.
\end{lem}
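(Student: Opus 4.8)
The plan is to prove the two assertions of Lemma~\ref{lem2} in sequence: first boundedness of the (PS) sequence, then that the weak limit is a critical point.

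\textbf{Boundedness.} The standard device is to combine the energy and the derivative along the diagonal. For a (PS) sequence we have $I_a(u_n)\to c$ and $I_a'(u_n)[u_n]=o_n(1)\|u_n\|_{\E}$. Taking the combination $I_a(u_n)-\frac{1}{p}I_a'(u_n)[u_n]$ kills the $L^p$ term and leaves
\begin{equation*}
	I_a(u_n)-\frac{1}{p}I_a'(u_n)[u_n]
	=\left(\frac{1}{2}-\frac{1}{p}\right)\|\nabla u_n\|_2^2
	+\left(\frac{1}{4}-\frac{1}{p}\right)q^2 V(u_n^2,u_n^2).
\end{equation*}
Since $p>4$, both coefficients $\frac{1}{2}-\frac{1}{p}$ and $\frac{1}{4}-\frac{1}{p}$ are strictly positive, so the right-hand side controls $M[u_n]$ from above by $c+1+o_n(1)\|u_n\|_{\E}$. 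Via the elementary bound \eqref{neweq1}, $M[u_n]$ controls $\|u_n\|_{\E}^4$ once $\|u_n\|_{\E}\ge 1$, which yields $\|u_n\|_{\E}^4\lesssim 1+\|u_n\|_{\E}$ and hence boundedness. This is exactly the place where the restriction $p>4$ is used, and it is the step I would write out carefully; the rest is bookkeeping.

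\textbf{The weak limit is critical.} By boundedness and the uniform convexity of $\E$ (Lemma~\ref{PropA.2}), up to a subsequence $u_n\weakto u$ weakly in $\E$; I want to show $I_a'(u)=0$, i.e. $I_a'(u)[v]=0$ for every $v\in\E$. For each fixed $v$ I would pass to the limit in
\begin{equation*}
	I_a'(u_n)[v]=\int_{\R^3}\nabla u_n\cdot\nabla v\,dx
	+q^2\int_{\R^3}\phi_{u_n}u_n v\,dx
	-\int_{\R^3}|u_n|^{p-2}u_n v\,dx,
\end{equation*}
noting the left side tends to $0$. The gradient term is linear and continuous for the weak topology, so it converges to $\int\nabla u\cdot\nabla v$. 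The nonlocal cubic term is precisely $q^2 T(u_n,u_n,u_n,v)$ in the notation introduced before Lemma~\ref{lem1}, so with the three weakly convergent sequences $u_n\weakto u$ (used three times) and the fixed element $v$, Lemma~\ref{lem1} gives convergence to $q^2 T(u,u,u,v)=q^2\int\phi_u u v$. The nonlinear term requires compactness: testing against $v\in C_c^\infty(\R^3)$ (dense in $\E$ by Lemma~\ref{density}), the local compact embedding of $D^{1,2}$ into $L^p_{loc}$ together with boundedness of $\{|u_n|^{p-2}u_n\}$ in $L^{p/(p-1)}$ gives $\int |u_n|^{p-2}u_n v\to\int |u|^{p-2}u v$. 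Combining the three limits yields $I_a'(u)[v]=0$ first for $v\in C_c^\infty$ and then, by density and continuity of $I_a'(u)$, for all $v\in\E$.

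\textbf{Main obstacle.} The delicate point is the nonlinear term: on the full space the embedding $\E\hookrightarrow L^p$ is only continuous, not compact, so one cannot simply invoke strong convergence of $u_n$ in $L^p(\R^3)$. Restricting the test functions to the compactly supported dense class $C_c^\infty(\R^3)$ circumvents this by localizing to a compact set where compactness is available, and density then extends the identity to all of $\E$; this is the step I would organize most carefully, whereas the handling of the nonlocal term is a clean application of the already-stated Lemma~\ref{lem1}.
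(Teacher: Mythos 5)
Your overall route coincides with the paper's: the combination $I_a(u_n)-\tfrac1p I_a'(u_n)[u_n]$ for boundedness (the paper uses $pI_a(u_n)-I_a'(u_n)[u_n]$, the same thing up to a factor $p$), and for criticality of the weak limit the test against $C_c^\infty$, weak convergence for the gradient term, Lemma~\ref{lem1} for the nonlocal term, local compactness for the power term, and Lemma~\ref{density} to extend to all of $\E$. The second half of your argument is correct and matches the paper essentially verbatim.

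There is, however, a genuine error in the boundedness step. You invoke \eqref{neweq1} to claim that $M[u_n]$ controls $\|u_n\|_{\E}^4$ ``once $\|u_n\|_{\E}\ge 1$'', but \eqref{neweq1} is stated (and is only true) under the hypothesis $\|u\|_{\E}\le 1$ or $M[u]\le 1$ --- exactly the opposite regime. Indeed the inequality $\tfrac12\|u\|_{\E}^4\le M[u]$ fails for large norms: writing $A=\|\nabla u\|_2^2$ and $B=V(u^2,u^2)^{1/2}$, one has $M[u]=A+B^2$ and $\|u\|_{\E}^4=(A+B)^2$, and taking $A$ large with $B$ bounded gives $M[u]\approx A$ while $\|u\|_{\E}^4\approx A^2$. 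The conclusion is still easily reached, and this is what the paper's (terse) ``together with the definition of the norm in $\E$'' is hiding: your identity shows that \emph{each} of $\|\nabla u_n\|_2^2$ and $V(u_n^2,u_n^2)$ is bounded by $C\bigl(1+\|u_n\|_{\E}\bigr)$, whence
\begin{equation*}
\|u_n\|_{\E}^2=\|\nabla u_n\|_2^2+V(u_n^2,u_n^2)^{1/2}\le C\bigl(1+\|u_n\|_{\E}\bigr)+C\bigl(1+\|u_n\|_{\E}\bigr)^{1/2},
\end{equation*}
which forces $\{\|u_n\|_{\E}\}$ to be bounded. Replace your appeal to \eqref{neweq1} by this two-line estimate and the proof is complete.
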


\begin{proof}
	Observe that
	\begin{equation*}
		\begin{aligned}
			p(c+1)+\|u_n\|_{\E}
			&\ge pI_a(u_n)-I_a'(u_n)[u_n]\\
			&= \left(\frac{p}{2}-1\right)\|\nabla u_n\|_2^2 + \left(\frac{p}{4}-1\right)q^2\int_{\R^3}\int_{\R^3}\K_a(x-y)u_n^2(x)u_n^2(y)dxdy, 
		\end{aligned}
	\end{equation*}
	which, together with the definition of the norm in $\E$, implies that $\{u_n\}$ is bounded in $\E$.
	
	Recalling that $\E$ is a reflexive Banach space in view of  Lemma \ref{PropA.2}, there exists $v_0\in \E$ such that, up to a subsequence, as $n\to +\infty$, we have that $u_n\weakto v_0$ weakly in $\E$, and $u_n\to v_0$ strongly in $L_{\rm loc}^\tau(\R^3)$ for all $\tau\in[1,6)$. Then, for all $\psi\in C_c^{\infty}(\R^3)$, we get 
	$$\int_{\R^3}\nabla u_n \nabla \psi dx\to \int_{\R^3}\nabla v_0 \nabla \psi dx,
	\quad 
	\int_{\R^3}|u_n|^{p-2}u_n\psi dx \to \int_{\R^3}|v_0|^{p-2}v_0\psi dx.$$
	Moreover, by using Lemma \ref{lem1}, there holds 
	$$\int_{\R^3}\phi_{u_n}u_n\psi dx \to\int_{\R^3}\phi_{v_0}v_0\psi dx.$$
	Consequently, as $n\to +\infty$, one can see that
	\begin{equation*}
		\begin{aligned}
			0
			&\gets I_a'(u_n)[\psi]+o_n(1)\\
			&= \int_{\R^3} \nabla u_n  \nabla \psi dx + q^2 \int_{\R^3} \phi_{u_n} u_n \psi dx - \int_{\R^3} |u_n|^{p-2}u_n \psi dx+o_n(1)\\
			&= \int_{\R^3}\nabla v_0 \nabla \psi dx+q^2\int_{\R^3}\phi_{v_0}v_0\psi dx-\int_{\R^3}|v_0|^{p-2}v_0\psi dx= I_a'(v_0)[\psi],
		\end{aligned}
	\end{equation*}
	that is, 
	$$\int_{\R^3}\nabla v_0 \nabla \psi dx+q^2\int_{\R^3}\phi_{v_0}v_0\psi dx=\int_{\R^3}|v_0|^{p-2}v_0\psi dx.$$ 
	By Lemma \ref{density}, the previous identity holds also for every $\psi\in \E$ and so $v_0$ is a critical point for $I_a$.

\end{proof}

As a consequence of the second part of Lemma \ref{lem2}, if the weak limit is indeed zero, the preceding proof further yields the corollary below.

\begin{cor}\label{cor1}
	Let $\{u_n\}\subset\E$ be a sequence such that $u_n\weakto0$ weakly in $\E$. Then we have, as $n\to +\infty$, 
	$$I_a'(u_n)\to 0.$$
\end{cor}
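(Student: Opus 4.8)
The plan is to extract the claim directly from the testing computation already performed in the proof of Lemma~\ref{lem2}, now specialized to the weak limit $v_0=0$, and to promote the convergence obtained there on the dense subspace $C_c^\infty(\R^3)$ to the whole of $\E$. Concretely, I would prove that $I_a'(u_n)[\psi]\to 0$ for every $\psi\in\E$, thereby establishing the asserted convergence $I_a'(u_n)\to 0$. I would proceed in two stages: first for test functions $\psi\in C_c^\infty(\R^3)$, then for arbitrary $\psi\in\E$ through a density argument backed by a uniform bound on $\|I_a'(u_n)\|_{\E'}$.

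For the first stage, observe that $u_n\weakto 0$ in $\E$ forces $\{u_n\}$ to be bounded in $\E$, and the continuous embedding $\E\hookrightarrow D^{1,2}(\R^3)$ gives $u_n\weakto 0$ in $D^{1,2}(\R^3)$; in particular $\nabla u_n\weakto 0$ in $L^2(\R^3)$ and, by the Rellich theorem, $u_n\to 0$ in $L^\tau_{\mathrm{loc}}(\R^3)$ for every $\tau\in[1,6)$. Fixing $\psi\in C_c^\infty(\R^3)$, I would handle the three terms of $I_a'(u_n)[\psi]$ exactly as in Lemma~\ref{lem2}: the gradient term $\int_{\R^3}\nabla u_n\cdot\nabla\psi\,dx\to 0$ by the weak convergence in $D^{1,2}(\R^3)$; the nonlocal term, rewritten as $\int_{\R^3}\phi_{u_n}u_n\psi\,dx=T(u_n,u_n,u_n,\psi)$ by means of the symmetry of $\K_a$, tends to $0$ by Lemma~\ref{lem1} (three copies of $u_n\weakto 0$, fixed fourth argument $\psi$); and the nonlinear term $\int_{\R^3}|u_n|^{p-2}u_n\psi\,dx\to 0$ because $\psi$ has compact support and $u_n\to 0$ in $L^p_{\mathrm{loc}}(\R^3)$. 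This yields $I_a'(u_n)[\psi]\to 0$ for all $\psi\in C_c^\infty(\R^3)$.

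For the upgrade to arbitrary $\psi\in\E$ I would first record that $\{I_a'(u_n)\}$ is uniformly bounded in $\E'$: estimating the three terms by the Cauchy-Schwarz inequality of Lemma~\ref{prop:main}, the continuity of the form $T$ in each variable, and the embedding $\E\hookrightarrow L^p(\R^3)$ of Lemma~\ref{tE36}, one obtains $\|I_a'(u_n)\|_{\E'}\le C(\|u_n\|_\E+\|u_n\|_\E^3+\|u_n\|_\E^{p-1})$, which stays bounded. Then, given $\psi\in\E$ and $\eps>0$, Lemma~\ref{density} provides $\psi_0\in C_c^\infty(\R^3)$ with $\|\psi-\psi_0\|_\E<\eps$, and the splitting $I_a'(u_n)[\psi]=I_a'(u_n)[\psi_0]+I_a'(u_n)[\psi-\psi_0]$, combined with the first stage and the uniform bound, gives $\limsup_{n}|I_a'(u_n)[\psi]|\le C\eps$; letting $\eps\to 0$ yields $I_a'(u_n)[\psi]\to 0$, as desired.

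The step I expect to be most delicate is the uniform $\E'$-bound on the nonlocal term, where the non-Hilbertian structure of $\E$ and the lack of homogeneity of $\K_a$ make themselves felt: one must dominate $v\mapsto\int_{\R^3}\phi_{u_n}u_n v\,dx=T(u_n,u_n,u_n,v)$ by $\|v\|_\E$ uniformly in $n$, which rests on the boundedness of the $4$-linear form $T$ (equivalently, its continuity in each variable together with the a priori bound on $\|u_n\|_\E$). The convergence of this same term against a fixed $\psi$ via Lemma~\ref{lem1} is the other point requiring care; fortunately both are already packaged into the preliminary lemmas, so the corollary follows once they are assembled.
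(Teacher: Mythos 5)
Your proof is correct and takes essentially the same route as the paper, which disposes of the corollary in one sentence by specializing the testing computation in the proof of Lemma~\ref{lem2} to the weak limit $v_0=0$ (gradient term by weak $D^{1,2}$-convergence, nonlocal term by Lemma~\ref{lem1}, nonlinear term by local compactness). Your second stage --- the uniform bound $\|I_a'(u_n)\|_{\E'}\le C(\|u_n\|_\E+\|u_n\|_\E^3+\|u_n\|_\E^{p-1})$ combined with the density Lemma~\ref{density} to pass from $C_c^\infty(\R^3)$ to all of $\E$ --- simply makes explicit a step the paper leaves implicit, and correctly reflects that the asserted convergence $I_a'(u_n)\to 0$ can only be meant in this pointwise (weak-$*$) sense.
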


Building on Lemma \ref{lem5} and Lemma \ref{lem2}, we are now ready to  prove the Splitting lemma, which analyzes the behavior of (PS) sequences and plays a crucial role in recovering compactness. For this purpose, we define the functional  ${M}_q:\E\to \R$ by
$${M}_q[u]:=\|\nabla u\|_2^2+q^2\int_{\R^3}\int_{\R^3}\K_a(x-y)u^2(x)u^2(y)dxdy.$$
Then we obtain the following result. 
\begin{lem}[Splitting lemma]\label{Thm3}
	Let $p\in(4,6)$ and $\{u_n\}\subset \E$ be a (PS) sequence of $I_a$ at a certain level $c>0$. 
	Then, up to subsequences, there exist $k\in\N\cup\{0\}$ and a finite sequence
	$$(v_0,v_1,\dots,v_k)\subset\E, \quad v_i\not\equiv0  \text{ for } i\ge 0$$
of weak solutions 
$$-\Delta u +q^2\phi_u u=|u|^{p-2}u \qquad\text{in }\RT,$$ 
and $k$ sequences $\{\xi_n^1\},\dots,\{\xi_n^k\}\subset\R^3$, such that, as $n\to +\infty$, 
$$\left\|u_n-v_0-\sum_{i=1}^{k}v_i(\cdot -\xi_n^i)\right\|_{\E}\to 0;$$
$$|\xi_n^i|\to +\infty, \quad |\xi_n^i-\xi_n^j|\to +\infty, \quad i\neq j ;$$
$$\sum_{i=0}^{k}I_a(v_i)=c, \quad {M}_q[u_n]\to\sum_{i=0}^{k}{M}_q[v_i].$$ 
\end{lem}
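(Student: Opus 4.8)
The plan is to run an iterative profile-decomposition scheme of Benci--Cerami type, exactly as in \cite[Lemma 2.3]{IR12}, extracting one nontrivial solution at each stage until the residual converges strongly to zero. By Lemma \ref{lem2} the (PS) sequence $\{u_n\}$ is bounded in $\E$, so up to a subsequence $u_n\weakto v_0$ weakly in $\E$, and the same lemma guarantees that $v_0$ is a critical point of $I_a$, i.e.\ a weak solution of the nonlocal equation. Setting $u_n^1:=u_n-v_0$ we have $u_n^1\weakto 0$ in $\E$, and the whole argument reduces to understanding (PS) sequences that converge weakly to $0$.

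The algebraic core is a Brezis--Lieb type splitting of every piece of $I_a$ and of $I_a'$ along a weakly convergent sequence. For the kinetic and the $L^p$ terms this is the classical Brezis--Lieb lemma, giving
\[
\|\nabla u_n\|_2^2=\|\nabla v_0\|_2^2+\|\nabla u_n^1\|_2^2+o_n(1),\qquad \|u_n\|_p^p=\|v_0\|_p^p+\|u_n^1\|_p^p+o_n(1),
\]
compactness for the $L^p$ term being supplied by Lemma \ref{tE36}. For the nonlocal Bopp--Podolsky energy I would exploit the bilinearity of $V$ together with Lemma \ref{lem1} (applied to the multilinear form $T$) and the Cauchy--Schwarz inequality of Lemma \ref{prop:main} to show that all cross terms vanish, so that
\[
V(u_n^2,u_n^2)=V(v_0^2,v_0^2)+V((u_n^1)^2,(u_n^1)^2)+o_n(1),
\]
and similarly for the derivative term $\int_{\RT}\phi_{u_n}u_n\psi\,dx$. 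Combining these identities with the fact that $v_0$ is a critical point yields $I_a(u_n^1)\to c-I_a(v_0)=:c_1$ and $I_a'(u_n^1)\to 0$; that is, $\{u_n^1\}$ is again a (PS) sequence, now weakly converging to $0$.

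Next comes the dichotomy, applied to $\{u_n^1\}$ via the Vanishing Lemma (Lemma \ref{lem3}). If vanishing occurs, then $u_n^1\to 0$ in $L^\alpha(\RT)$ for $\alpha\in(3,6)$; inserting this into $I_a'(u_n^1)[u_n^1]\to 0$ and using \eqref{neweq1} forces $M_q[u_n^1]\to 0$ and hence $u_n^1\to 0$ strongly in $\E$, so $k=0$ and we stop. Otherwise there are $R,\delta>0$ and points $\xi_n^1\in\RT$ with $\int_{B_R(\xi_n^1)}|u_n^1|^\tau\,dx\ge\delta$; since $u_n^1\weakto 0$, local compactness forces $|\xi_n^1|\to+\infty$. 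Because the $\E$-norm and $I_a$ are invariant under translations (as $\K_a(x-y)$ depends only on $x-y$), the shifted sequence $u_n^1(\cdot+\xi_n^1)$ is a (PS) sequence at level $c_1$, and its weak limit $v_1$ is, by Lemma \ref{lem2}, a weak solution, with $v_1\not\equiv0$ by the retained mass. One then sets $u_n^2:=u_n-v_0-v_1(\cdot-\xi_n^1)$, repeats the Brezis--Lieb splitting (the profiles now sitting at diverging points, so their interaction is $o_n(1)$), and obtains a (PS) sequence at level $c_2=c_1-I_a(v_1)$, whence the scheme iterates. Termination is forced by an energy gap: each nontrivial profile satisfies $M[v_i]\ge C>0$ by Lemma \ref{lem5}, and since $p>4$ the solution identity gives $I_a(v_i)=\left(\tfrac12-\tfrac1p\right)\|\nabla v_i\|_2^2+\left(\tfrac14-\tfrac1p\right)q^2V(v_i^2,v_i^2)\ge\beta>0$ with $\beta$ uniform; as the telescoped levels sum to $c$, at most $c/\beta$ profiles can appear. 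Collecting the levels yields $\sum_{i=0}^k I_a(v_i)=c$ and $M_q[u_n]\to\sum_i M_q[v_i]$, while $|\xi_n^i-\xi_n^j|\to+\infty$ follows since each residual converges weakly to $0$.

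I expect the decisive difficulty to be the nonlocal splitting: proving that the Bopp--Podolsky interaction $V$ decouples, both in the energy and, more delicately, in the derivative $\int_{\RT}\phi_{u_n}u_n\psi\,dx$, when several profiles sit at mutually diverging locations. Controlling these cross terms is precisely what Lemmas \ref{lem1} and \ref{prop:main} are built for, but one must also carefully verify that $I_a$ and $I_a'$ are genuinely translation invariant on $\E$, so that each shifted limit $v_i$ solves the \emph{same} equation. This translation invariance, combined with the uniform energy gap of Lemma \ref{lem5}, is what simultaneously powers the iteration and guarantees that it halts after finitely many steps.
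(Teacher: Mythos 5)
Your proposal is correct and follows essentially the same strategy as the paper's proof: iterative extraction of profiles via Lemma \ref{lem2}, Brezis--Lieb splitting of the kinetic and $L^p$ terms together with the multilinear form $T$ and Lemma \ref{lem1} to decouple the Bopp--Podolsky energy, the vanishing/non-vanishing dichotomy of Lemma \ref{lem3} to produce the diverging translations, and termination from a uniform lower bound on the profiles via Lemma \ref{lem5}. The only (harmless) variations are that you handle the ``residual tends to zero'' alternative directly through the vanishing case and the identity $I_a'(u_n^1)[u_n^1]\to 0$, where the paper instead argues by strict weak lower semicontinuity of $M_q$ combined with the Nehari identity, and that you stop the iteration by bounding $\sum_i I_a(v_i)$ rather than $\sum_i M_q[v_i]$.
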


\begin{proof} The proof is carried out in the following steps.
	
\textbf{Step 1.}  Passing to a subsequence if necessary and applying Lemma \ref{lem2}, we deduce that there exists $v_0\in\E$ such that
$$u_n \weakto v_0 \text{ weakly in } \E \text{ as } n\to +\infty,$$
and
\begin{equation}\label{eq7.6}
	I_a'(v_0)=0.
\end{equation}

Set 
$u_{n,1}:=u_n-v_0.$
We claim now that
\begin{equation}\label{eq7.4}
	I_a(u_n)-I_a(u_{n,1})\to I_a(v_0),\quad {M}_q[u_n]-{M}_q[u_{n,1}]\to {M}_q[v_0].
\end{equation} 
By using the weak convergence in $D^{1,2}(\R^3)$, the well-known Brezis-Lieb lemma and Lemma \ref{lem1}, we obtain that
$$\|\nabla u_n\|_2^2-\|\nabla (u_n-v_0)\|_2^2 \to \|\nabla v_0\|_2^2,\quad
\|u_n\|_p^{p}-\|u_n-v_0\|_p^{p} \to \|v_0\|_p^{p},$$
and 
\begin{equation*}
	\begin{aligned}
		&T(u_{n,1},u_{n,1},u_{n,1},u_{n,1})\\
		&=T(u_n,u_n-v_0,u_n-v_0,u_n-v_0)-T(v_0,u_n-v_0,u_n-v_0,u_n-v_0)\\
		&=T(u_n,u_n-v_0,u_n-v_0,u_n-v_0)+o_n(1)\\
		&=T(u_n,u_n,u_n-v_0,u_n-v_0)+o_n(1)\\
		&=T(u_n,u_n,u_n,u_n-v_0)+o_n(1)\\
		&=T(u_n,u_n,u_n,u_n)-T(v_0,v_0,v_0,v_0)+o_n(1),
	\end{aligned}
\end{equation*}
which, combined with the definitions of $I_a$ and ${M}_q$, indicates that the claim \eqref{eq7.4} holds.

If $u_{n,1}\to 0$ in $\E$, then the proof is concluded. To be specific, in this case, we have $I_a(u_{n,1})\to 0$ and ${M}_q[u_{n,1}]\to 0$. Thus, by \eqref{eq7.4}, it holds
$$0<c=I_a(u_n)+o_n(1)\to I_a(v_0), \quad {M}_q[u_n]\to {M}_q[v_0].$$
Furthermore,  applying \eqref{eq7.6} and the fact $I_a(v_0)>0$, it follows that $v_0\not\equiv0$ is a nontrivial weak solution of \eqref{nonloc}.

Suppose now that $u_{n,1}\not\to 0$ in $\E$.  
Recalling that $\{u_n\}\subset \E$ is a (PS) sequence for $I_a$, we derive from \eqref{eq7.6} that
\begin{equation}\label{eq7.3}
		{M}_q[u_n] - \|u_n\|_p^{p}=I_a'(u_n)[u_n]
		\to0=I_a'(v_0)[v_0]= {M}_q[v_0]-\|v_0\|_p^{p}.
\end{equation}
Noting that $u_n\weakto v_0$ weakly in $\E$, together with Lemma \ref{lem3.4} \eqref{weakconv}, we get 
\begin{equation}\label{eq7.28}
	u_n\weakto v_0 \text{ weakly in } D^{1,2}(\R^3) ~\text{ and }~ \phi_{u_n}\rightharpoonup \phi_{v_0} \text{ weakly in } \mathcal{A}.
\end{equation}
Since $u_n\not\to v_0$ in $\E$,  \eqref{eq7.4} shows that at least one of the convergences in \eqref{eq7.28} is not strong convergence. Combined with Lemma \ref{thmE}, up to a subsequence, this yields that
\begin{equation*}
	\begin{aligned}
		{M}_q[u_n]+o_n(1)&=\|\nabla u_n\|_2^2+q^2\int_{\R^3}\int_{\R^3}\K_a(x-y)u_n^2(x)u_n^2(y)dxdy+o_n(1)\\
		&=\|\nabla u_n\|_2^2+\frac{q^2}{4\pi}\|\phi_{u_n}\|_{\A}^2+o_n(1)\\
		&> \|\nabla v_0\|_2^2+\frac{q^2}{4\pi}\|\phi_{v_0}\|_{\A}^2={M}_q[v_0].
	\end{aligned}
\end{equation*}
Combining this with \eqref{eq7.3}, we conclude that $u_n\not\to v_0$ in $L^p(\R^3)$, that is,
$u_{n,1}\not\to 0 \text{ in } L^p(\R^3).$
By Lemma \ref{lem3}, for any $q\in[4,6)$, we conclude that there exist $\delta_1>0$ and $\{\xi_n^1\}\subset\R^3$, such that
\begin{equation}\label{eq7.5}
	\lim_{n \to +\infty}\int_{B_1(\xi_n^1)}|u_{n,1}(x)|^qdx=\lim_{n \to +\infty}\int_{B_1(0)}|u_{n,1}(x+\xi_n^1)|^qdx\ge\delta_1>0.
\end{equation}	
Since $u_{n,1}\weakto 0$ weakly in $\E$, then $u_{n,1}\to 0$ in $L_{\rm loc}^\tau(\R^3)$ for all $\tau\in[1,6)$, we thus deduce by contradiction that $|\xi_n^1|\to +\infty$.

\textbf{Step 2.} We now turn our attention to the sequence $\{u_{n,1}(\cdot +\xi_n^1)\}$. Clearly, it is a (PS) sequence at level $c-I_a(v_0)$ by applying \eqref{eq7.4} and Corollary \ref{cor1}. Going if necessary to a subsequence, we may assume that 
\begin{equation}\label{eq7.7}
	u_{n,1}(\cdot +\xi_n^1)\weakto v_1 \text{ weakly in } \E.
\end{equation}
It then follows from Lemma \ref{lem2} and \eqref{eq7.5} that $v_1\not \equiv0$ is a nontrivial weak solution of \eqref{nonloc}.

Define $u_{n,2}:=u_{n,1}-v_1(\cdot -\xi_n^1).$
Arguing as in Step 1 and in view of \eqref{eq7.4}, one has 
\begin{equation}\label{eq7.21}
	I_a(u_{n,2})=I_a(u_{n,1})-I_a(v_1)+o_n(1)=I_a(u_n)-I_a(v_0)-I_a(v_1)+o_n(1),
\end{equation}
and
\begin{equation*}
{M}_q[u_{n,2}]={M}_q[u_{n,1}]-{M}_q[v_1]+o_n(1)=	 {M}_q[u_n]-{M}_q[v_0]-{M}_q[v_1]+o_n(1).
\end{equation*}
Furthermore, one can see that $u_{n,2}=u_{n,1}-v_1(\cdot -\xi_n^1)\weakto0 $ weakly in $\E$ since both summands converge weakly to zero, and $u_{n,2}(\cdot +\xi_n^1)=u_{n,1}(\cdot +\xi_n^1)-v_1\weakto 0$ weakly in $\E$ by using \eqref{eq7.7}.

If $u_{n,2}\to 0$ in $\E$, then we are done. Otherwise, as in Step 1 (see \eqref{eq7.3}-\eqref{eq7.5}), we can show that $u_{n,2}\not\to 0$ in $L^p(\R^3)$. Consequently, for each $q\in[4,6)$, there exist $\delta_2>0$ and $\{\xi_n^2\}\subset\R^3$, such that
\begin{equation}\label{eq7.8}
	\lim_{n \to +\infty}\int_{B_1(\xi_n^2)}|u_{n,2}(x)|^qdx=\lim_{n \to +\infty}\int_{B_1(0)}|u_{n,2}(x+\xi_n^2)|^qdx\ge\delta_2>0.
\end{equation}	
Recalling that $u_{n,2}\weakto0 $ weakly in $\E$ and $u_{n,2}(\cdot +\xi_n^1)\weakto 0$ weakly in $\E$, it then follows from \eqref{eq7.8} that 
$$|\xi_n^2|\to +\infty ~\text{ and }~ |\xi_n^2-\xi_n^1|\to +\infty.$$
Furthermore, up to subsequences, we derive from \eqref{eq7.21}, \eqref{eq7.8} and Lemma \ref{lem2} that 
$$u_{n,2}(\cdot +\xi_n^2)\weakto v_2\not\equiv0 \text{ weakly in } \E,$$ 
where $v_2\not \equiv0$ is a nontrivial weak solution of $-\Delta u +q^2\phi_u u=|u|^{p-2}u$.
Let
$$u_{n,3}:=u_{n,2}-v_2(\cdot -\xi_n^2).$$

\textbf{Step 3.} Iterating the above procedure, we construct sequences $\{u_{n,j}\}_{j=1,2,\dots}$ and $\{\xi_n^j\}_{j=1,2,\dots}$ such that
$$u_{n,j+1}:=u_{n,j}-v_j(\cdot -\xi_n^j),$$
$$u_{n,j}(\cdot +\xi_n^j)\weakto v_j \text{ weakly in } \E,$$
$$I_a'(v_j)=0, \quad v_j\not\equiv0 \text{ for } j\ge 1,$$
$$I_a(u_{n,j})=I_a(u_n)-\sum_{i=0}^{j-1}I_a(v_i)+o_n(1),$$
and
$${M}_q[u_{n,j}]={M}_q[u_n]-\sum_{i=0}^{j-1}{M}_q[v_i]+o_n(1).$$
Noting that ${M}_q[u_n]$ is bounded and that ${M}_q[v_i]$ is away from zero by Lemma \ref{lem5}, this means that the iteration must stop at a certain point, that is, for some $k$, $u_{n,k}\to 0$ in $\E$. This concludes the proof.

\end{proof}

Finally, with the help of the above lemmas, we proceed to the proof of   Theorem \ref{Thm2}.
\begin{proof}[Proof of Theorem \ref{Thm2}] 
In view of Lemma~\ref{lem2}, we take $\{u_n\}\subset\E$ to be the bounded (PS) sequence for $I_a$ at level $c_a>0$ as in \eqref{eq7.9}. Applying Lemma \ref{Thm3}, we obtain that
	$$\sum_{i=0}^{k}I_a(v_i)=c_a,$$
where each $v_i $ is  weak solution of \eqref{nonloc}  and only $v_0$ could be zero. 
By Proposition \ref{prop1}, this yields that $I_a(v_i)\ge c_a$ whenever $v_i\ne0$. Thus, there are two possibilities: 
$$\text{either }~ v_0\ne 0 \text{ and } k=0, 
~\text{ or }~ 
v_0=0 \text{ and } k=1.$$
In the first case, $v_0$ is a ground state solution at level $c_a$ and $u_n\to v_0$ in $\E$. In the latter, $v_1$ is a ground state solution at level $c_a$ and $u_{n}(\cdot+\xi_n^1)=u_{n,1}(\cdot+\xi_n^1)\to v_1$ in $\E$. 

The proof is thus complete.

\end{proof}

By using a similar reasoning as the above proof of  Theorem~\ref{Thm2},  together with  Lemma \ref{tEr187}, Lemma \ref{MP} and Proposition \ref{prop1}, we can  more directly conclude that system \eqref{eq1.1} has a ground state solution in $\E_r$ for $p\in (4,6)$.

\section{Pohozaev identity and nonexistence results}\label{se4}
This section is devoted to establishing certain nonexistence results. 
To this end, we begin studying the regularity of solutions of system \eqref{eq1.1}.

\begin{lem}\label{reg}
Suppose $p\in[3,6)$ and $(u,\phi_u)\in \E\times \A$ is a nontrivial weak solution of system \eqref{eq1.1}, then $(u,\phi_u)\in C_{\rm loc}^{2,\sigma}(\RT)\times C_{\rm loc}^{4,\sigma}(\RT)$, for $\sigma\in (0,1/2]$.    
\end{lem}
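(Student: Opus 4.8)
The plan is a local elliptic bootstrap; the only genuinely nonstandard feature is the fourth-order, nonlocal coupling in the second equation, which I would remove by a change of unknown. Since the conclusion is interior regularity, all the estimates below are meant on balls. As a starting point, I would record that for a weak solution $\phi_u\in\A$, so by the embedding \eqref{eq7.22} one already has $\phi_u\in L^\infty(\RT)$; this makes the linear potential term in the Schr\"odinger equation a bounded coefficient.

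First I would improve $u$. Writing the first equation as $-\Delta u = h$ with $h:=|u|^{p-2}u-q^2\phi_u u$ and using $u\in L^\tau(\RT)$ for every $\tau\in[3,6]$ (Lemma \ref{tE36}) together with $\phi_u\in L^\infty$, one gets $h\in L^{6/(p-1)}_{\rm loc}$ to begin with. Then I would iterate Calder\'on--Zygmund estimates and Sobolev embeddings: if $u\in L^s_{\rm loc}$ then $h\in L^{s/(p-1)}_{\rm loc}$, hence $u\in W^{2,s/(p-1)}_{\rm loc}$, which embeds into a better $L^{s'}_{\rm loc}$. The subcriticality $p<6$ is exactly what guarantees a strict gain at each step (one checks $s'>s$ whenever $s>\tfrac{3}{2}(p-2)$, which holds from the start since $s\ge 6$), so after finitely many steps $u\in L^\infty_{\rm loc}$, whence $h\in L^q_{\rm loc}$ and $u\in W^{2,q}_{\rm loc}$ for all $q<\infty$. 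By the Morrey embedding $u\in C^{1,\sigma}_{\rm loc}$; taking $q=6$ gives $W^{2,6}_{\rm loc}\hookrightarrow C^{1,1/2}_{\rm loc}$, which is what pins the admissible range $\sigma\in(0,1/2]$.

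Next I would treat $\phi_u$ by linearising the biharmonic-type equation. Setting $w:=-\Delta\phi_u$, which lies in $L^2(\RT)$ because $\phi_u\in\A$, the equation $-\Delta\phi_u+a^2\Delta^2\phi_u=4\pi u^2$ turns into the uniformly elliptic second-order equation
\[
-a^2\Delta w+w=4\pi u^2 \quad\text{in }\RT.
\]
Since $u\in C^{0,\sigma}_{\rm loc}$ forces $u^2\in C^{0,\sigma}_{\rm loc}\cap L^\infty_{\rm loc}$, $L^q$ theory followed by Schauder estimates give $w\in C^{2,\sigma}_{\rm loc}$; then $-\Delta\phi_u=w\in C^{2,\sigma}_{\rm loc}$ and one more Schauder estimate yields $\phi_u\in C^{4,\sigma}_{\rm loc}$, in particular $\phi_u\in C^{0,\sigma}_{\rm loc}$. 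To close the argument I would go back to $-\Delta u=h$: now $u,\phi_u\in C^{0,\sigma}_{\rm loc}$ and, since $p\ge3$, the map $t\mapsto|t|^{p-2}t$ is $C^1$, so $h\in C^{0,\sigma}_{\rm loc}$ and Schauder gives $u\in C^{2,\sigma}_{\rm loc}$.

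I expect the main obstacle to be handling the fourth-order nonlocal term: the $w$-substitution is the key device that reduces everything to two standard second-order problems coupled through $u^2$. The secondary points requiring care are verifying that the integrability iteration really gains when $p$ is close to $6$ (where the margin $s-\tfrac{3}{2}(p-2)$ is smallest) and checking that the composition with the power nonlinearity preserves H\"older continuity, so that the final Schauder step applies with the claimed exponent $\sigma\le1/2$.
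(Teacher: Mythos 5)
Your argument is correct and is essentially the paper's proof written out in full: the paper simply cites Willem's Lemma~1.30 for the Brezis--Kato/bootstrap step on $-\Delta u=(|u|^{p-2}-q^2\phi_u)u$ (using exactly the same ingredients, $\phi_u\in\A\hookrightarrow L^\infty$ and $u\in L^\tau$ for $\tau\in[3,6]$) and defers the rest to the appendix of d'Avenia--Siciliano, where the fourth-order equation is handled by the same factorization you implement via $w:=-\Delta\phi_u$ solving $-a^2\Delta w+w=4\pi u^2$. The only cosmetic remark is that your bootstrap actually yields $u\in C^{1,\alpha}_{\rm loc}$ for every $\alpha<1$, so the bound $\sigma\le 1/2$ is not forced by the $W^{2,6}$ embedding; it is just the range the lemma asserts.
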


\begin{proof}
    Recalling the proof of \cite[Lemma 1.30]{W96}, we know that 
if $-\Delta u=bu$ with $b\in L^{3/2}_{\rm loc}(\R^3)$, then $u\in C^2(\R^3)$. \\
	In our case, let $b:=|u|^{p-2}-q^2\phi_u$. Then, by \eqref{eq7.22} and Lemma \ref{tE36}, one can see that 
	$$\phi_u\in \A\hookrightarrow L^{\infty}(\R^3),$$ 
	and 
	$$|u|^{p-2}\in L^{6/(p-2)}(\R^3), \text{ where } 3/2<6/(p-2).$$ 
	Clearly, $b\in L^{3/2}_{\rm loc}(\R^3)$. By applying an analogous argument as in  \cite[\textit{A.1. Regularity of the solutions}]{DS19}, we deduce that the weak solutions obtained are indeed classical.
    
\end{proof}

Moreover the following Nehari and Pohozaev type identities hold.

\begin{lem}\label{lem6}
Suppose $p\in[3,6)$ and $(u,\phi_u)\in \E\times \A$ is a nontrivial weak solution of system \eqref{eq1.1}. Then we have that 
$$\|\nabla u\|_2^2+q^2\int_{\RT}\phi_u u^2dx=\|u\|_p^p
~~\text{ and }~~
\|\nabla \phi_u\|_2^2+a^2\|\Delta \phi_u\|_2^2=4\pi\int_{\RT}\phi_u u^2dx,$$
which are usually called Nehari identities. Furthermore, $(u,\phi_u)$ satisfies also the following Pohozaev identity
\begin{equation*}
	-\frac{1}{2}\|\nabla u\|_2^2+\frac{q^2}{16\pi}\|\nabla \phi_u\|_2^2-\frac{q^2a^2}{16\pi}\|\Delta \phi_u\|_2^2-\frac{3q^2}{2}\int_{\RT}\phi_u u^2dx+\frac{3}{p}\|u\|_p^p=0.
\end{equation*}
\end{lem}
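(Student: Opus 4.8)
The first two identities follow immediately from results already at our disposal. Since $(u,\phi_u)$ is a nontrivial weak solution, Corollary~\ref{ws} gives $I_a'(u)=0$, and testing the formula of Lemma~\ref{IC1} with $v=u$ yields
\[
0=I_a'(u)[u]=\|\nabla u\|_2^2+q^2\irt \phi_u u^2\,dx-\|u\|_p^p,
\]
which is exactly the first Nehari identity. The second one is nothing but the energy identity for $\phi_u$ recorded in Lemma~\ref{thmE}, namely $\|\nabla\phi_u\|_2^2+a^2\|\Delta\phi_u\|_2^2=\|\phi_u\|_\A^2=4\pi\irt\phi_u u^2\,dx$.

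For the Pohozaev identity the plan is to multiply the first equation $-\Delta u+q^2\phi_u u=|u|^{p-2}u$ by the dilation field $x\cdot\nabla u$ and integrate over $\RT$. By Lemma~\ref{reg} the pair $(u,\phi_u)$ is classical, so the integrations by parts below are licit on each ball $B_R$. The two local terms are standard: one finds $\irt(-\Delta u)(x\cdot\nabla u)\,dx=-\tfrac12\|\nabla u\|_2^2$, while writing $|u|^{p-2}u\,(x\cdot\nabla u)=\tfrac1p\,x\cdot\nabla(|u|^p)$ and integrating by parts gives $\irt |u|^{p-2}u\,(x\cdot\nabla u)\,dx=-\tfrac3p\|u\|_p^p$.

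The coupling term is the one that forces us to invoke the second equation. Using $u\,(x\cdot\nabla u)=\tfrac12\,x\cdot\nabla(u^2)$ and integrating by parts,
\[
q^2\irt\phi_u u\,(x\cdot\nabla u)\,dx=-\frac{q^2}{2}\irt(x\cdot\nabla\phi_u)\,u^2\,dx-\frac{3q^2}{2}\irt\phi_u u^2\,dx.
\]
To evaluate the first integral on the right I would substitute $4\pi u^2=-\Delta\phi_u+a^2\Delta^2\phi_u$ and use the Pohozaev identities for the Laplacian and the bi-Laplacian in dimension three, that is $\irt(x\cdot\nabla\phi_u)(-\Delta\phi_u)\,dx=-\tfrac12\|\nabla\phi_u\|_2^2$ and $\irt(x\cdot\nabla\phi_u)(\Delta^2\phi_u)\,dx=\tfrac12\|\Delta\phi_u\|_2^2$; the latter follows from the pointwise identity $\Delta(x\cdot\nabla\phi_u)=2\Delta\phi_u+x\cdot\nabla(\Delta\phi_u)$. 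This gives $\irt(x\cdot\nabla\phi_u)u^2\,dx=-\tfrac{1}{8\pi}\|\nabla\phi_u\|_2^2+\tfrac{a^2}{8\pi}\|\Delta\phi_u\|_2^2$, and collecting all the contributions in the tested equation reproduces precisely the claimed identity.

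The only genuinely delicate point is the justification of the integrations by parts, i.e. the vanishing of the boundary terms arising on $\partial B_R$. Since $\nabla u,\nabla\phi_u,\Delta\phi_u\in L^2(\RT)$, while $u\in L^6(\RT)$ and $\phi_u\in L^\infty(\RT)$ by \eqref{eq7.22} and Lemma~\ref{tE36}, each such boundary integral is dominated by a quantity of the form $R\int_{\partial B_R}(\cdots)\,dS$ whose integral in $R$ over $(0,+\infty)$ is finite; consequently there is a sequence $R_n\to+\infty$ along which all these quantities tend to zero simultaneously. Performing the computation on $B_{R_n}$ and letting $n\to+\infty$ then eliminates every boundary contribution and yields the identity. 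The terms generated by the bi-Laplacian are the most technical, since they involve higher-order derivatives of $\phi_u$ on the spheres, and this is where the decay information carried by the regularity of Lemma~\ref{reg} is really needed.
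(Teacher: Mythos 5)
Your proof is correct and follows essentially the same route as the paper: the Nehari identities come from testing the two equations with $u$ and $\phi_u$ (equivalently, $I_a'(u)[u]=0$ via Corollary~\ref{ws} and the energy identity of Lemma~\ref{thmE}), and the Pohozaev identity is obtained by the standard dilation-field computation that the paper simply delegates to \cite[A.3]{DS19}, with all your constants and signs checking out exactly. The only point handled more carefully in that reference than in your sketch is the vanishing of the boundary terms produced by the bi-Laplacian, which involve third derivatives of $\phi_u$ and therefore need more than the purely local regularity of Lemma~\ref{reg}.
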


\begin{proof}
	Multiplying the first equation of \eqref{eq1.1} by $u$ and the second equation by  $\phi_u$, and integrating on $\R^3$, we obtain Nehari identities. 	
	
	 Subsequently, by Lemma \ref{reg} and following the reasoning in \cite[\textit{A.3. The Pohozaev identity}]{DS19}, we establish the Pohozaev identity.
     
\end{proof}

\begin{rem}\label{rem1}
	In particular, for the radial space $\E_r$, the previously mentioned Pohozaev identity can be written as  
	\begin{equation}\label{eq7.10}
		P_a(u):=-\frac{1}{2}\|\nabla u\|_2^2-\frac{q^2}{4a}\int_{\RT}\int_{\RT}\left[\frac{5(1-e^{-\frac{|x-y|}{a}})}{|x-y|/a}+e^{-\frac{|x-y|}{a}}\right]u^2(x)u^2(y)dxdy
		+\frac{3}{p}\|u\|_p^p=0,
	\end{equation}
which will play a crucial role in the asymptotic behaviour discussed in Section \ref{se5}.
	Indeed, to be specific, for any $u\in\E_r$, using Lemma \ref{tEr187}
	($\E_r\hookrightarrow L^{\tau}(\R^3)$ with $\tau\in(18/7,6]$) and recalling the proof of equality (A.3) in \cite{DS19}, we obtain that
\begin{equation}\label{deltapo}
	\|\Delta\phi_u\|_2^2=\frac{2\pi}{a^3}\int_{\RT}\int_{\RT}e^{-\frac{|x-y|}{a}}u^2(x)u^2(y)dxdy,
\end{equation}
	which, together with the above Nehari identities, yields the desired result.

\end{rem}

\begin{rem}\label{rem2}
Observe that in the positive mass case, in \cite{DS19}, the authors do not need to consider just the radial setting to obtain the corresponding version of Pohozaev identity \eqref{eq7.10} which is, actually,  satisfied also in $H^1(\RT)$. In our case, instead, since the embeddings of $\E$ into $L^{\tau}(\R^3)$ with $\tau\in[2,3)$ are not known,  in order to prove \eqref{deltapo}, the radiality seems to be somehow necessary.
\end{rem}

We now turn to establishing nonexistence results  for system \eqref{eq1.1}. 
\begin{proof}[The proof of Theorem \ref{Thm4}]

	If $(u,\phi_u)\in \E\times \A$ is a nontrivial solution of system \eqref{eq1.1},  then  we deduce from Lemma \ref{lem6} that
	\begin{equation*}
		\begin{aligned}
				0&=-\frac{1}{2}\|\nabla u\|_2^2+\frac{q^2}{16\pi}\|\nabla \phi_u\|_2^2-\frac{q^2a^2}{16\pi}\|\Delta \phi_u\|_2^2-\frac{3q^2}{2}\int_{\RT}\phi_u u^2dx+\frac{3}{p}\|u\|_p^p\\
				&=-\frac{1}{2}\|\nabla u\|_2^2+\frac{q^2}{16\pi}\left(4\pi\int_{\RT}\phi_u u^2dx-a^2\|\Delta \phi_u\|_2^2\right)
				-\frac{q^2a^2}{16\pi}\|\Delta \phi_u\|_2^2-\frac{3q^2}{2}\int_{\RT}\phi_u u^2dx\\
				&~~~~+\frac{3}{p}\left(\|\nabla u\|_2^2+q^2\int_{\RT}\phi_u u^2dx\right)\\
				&=\frac{6-p}{2p}\|\nabla u\|_2^2+\frac{q^2(12-5p)}{4p}\int_{\RT}\phi_u u^2dx-\frac{q^2a^2}{8\pi}\|\Delta \phi_u\|_2^2.
		\end{aligned}
	\end{equation*}
So, in the case $p\ge6$, 
\[
0=\frac{6-p}{2p}\|\nabla u\|_2^2+\frac{q^2(12-5p)}{4p}\int_{\RT}\phi_u u^2dx-\frac{q^2a^2}{8\pi}\|\Delta \phi_u\|_2^2<0,
\]
which yields a contradiction. \\
On the other hand, by using Nehari identities,  if $p<12/7$, one has
    \begin{equation*}
		\begin{aligned}
				0&=\frac{6-p}{2p}\|\nabla u\|_2^2+\frac{q^2(12-5p)}{4p}\int_{\RT}\phi_u u^2dx-\frac{q^2a^2}{8\pi}\|\Delta \phi_u\|_2^2\\
				&=\frac{6-p}{2p}\|\nabla u\|_2^2+\frac{q^2(12-5p)}{4p}\int_{\RT}\phi_u u^2dx-\frac{q^2}{8\pi}\left(4\pi\int_{\RT}\phi_u u^2dx-\|\nabla \phi_u\|_2^2\right)\\
                &\ge q^2\left(\frac{12-5p}{4p}-\frac{1}{2}\right)\int_{\RT}\phi_u u^2dx>0,
		\end{aligned}
	\end{equation*}
obtaining again a contradiction.
\end{proof}

Now we can pass to prove the nonexistence result in the radial case.

\begin{proof}[The proof of Theorem \ref{Thm4-bis}]
Let $(v,\phi_v)\in \E_r\times \A_r$ be a nontrivial solution of system \eqref{eq1.1}. Then, for the case $p\le2$, it follows from Lemma \ref{lem6} and Remark \ref{rem1} that
    \begin{equation*}
        \begin{aligned}
            0&=\|\nabla v\|_2^2+q^2\int_{\RT}\phi_v v^2dx-\|v\|_p^p\\
            &=\|\nabla v\|_2^2+q^2\int_{\RT}\phi_v v^2dx\\
            &~~~~+\frac{p}{3}\left\{-\frac{1}{2}\|\nabla v\|_2^2-\frac{q^2}{4a}\int_{\RT}\int_{\RT}\left[\frac{5(1-e^{-\frac{|x-y|}{a}})}{|x-y|/a}+e^{-\frac{|x-y|}{a}}\right]v^2(x)v^2(y)dxdy\right\}\\
            &=\left(1-\frac{p}{6}\right)\|\nabla v\|_2^2+q^2\int_{\RT}\int_{\RT}\left[\left(\frac{1}{a}-\frac{5p}{12a}\right)\frac{(1-e^{-\frac{|x-y|}{a}})}{|x-y|/a}-\frac{p}{12a}e^{-\frac{|x-y|}{a}}\right]v^2(x)v^2(y)dxdy\\
            &\ge\frac{2}{3}\|\nabla v\|_2^2+\frac{q^2}{6a}\int_{\RT}\int_{\RT}\left[\frac{1-e^{-\frac{|x-y|}{a}}}{|x-y|/a}-e^{-\frac{|x-y|}{a}}\right]v^2(x)v^2(y)dxdy>0,
        \end{aligned}
    \end{equation*}
since the function in the parenthesis is positive. This establishes the desired result.
\end{proof}

\section{The behaviour as $a\to 0$ in the radial case}\label{se5}

In this section we focus on proving Theorem \ref{Thm1}, concerning the asymptotic behaviour of nontrivial weak solutions to system \eqref{eq1.1} as $a\to 0$, in the radial setting.

Following  Lemma \ref{lem6} and Remark \ref{rem1},  we define the Nehari-Pohozaev set
\begin{equation}\label{M}
    \mathcal{M}_{a,r}:=\Big\{u\in \E_r\setminus\{0\}: J_a(u)=0 \Big\},
\end{equation}
where
\begin{equation}\label{eq7.11}
	\begin{split}
		J_a(u):=&~2I_a'(u)[u]+P_a(u)\\
		=&~\frac32\|\nabla u\|_2^2-\frac{2p-3}{p}\|u\|_p^p +\frac{3q^2}{4}\int_{\R^3}\int_{\R^3}\frac{1-e^{-\frac{|x-y|}{a}}-\frac{|x-y|}{3a}e^{-\frac{|x-y|}{a}}}{|x-y|}u^2(x)u^2(y)dxdy.
	\end{split}
\end{equation}
Now, we present a key inequality.
\begin{lem}\label{lem7}
	Assume that $p\in(4,6)$. For each $t>0$, one has
	\begin{equation}\label{eq7.12}
		I_a(u)-I_a(t^2u(tx))\ge \frac{1-t^3}{3}J_a(u), ~~\forall u\in\E_r.
	\end{equation}
\end{lem}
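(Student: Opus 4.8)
The plan is to set $u_t(x):=t^2u(tx)$ and study the scalar function $g(t):=I_a(u_t)$, so that \eqref{eq7.12} reduces to the one-variable inequality $g(1)-g(t)\ge \tfrac{1-t^3}{3}J_a(u)$ for $t>0$, with equality at $t=1$. First I would record how each piece of $I_a$ transforms under the scaling. A change of variables gives the routine identities $\|\nabla u_t\|_2^2=t^3\|\nabla u\|_2^2$ and $\|u_t\|_p^p=t^{2p-3}\|u\|_p^p$, while the Bopp--Podolsky term becomes
\[
\int_{\R^3}\int_{\R^3}\K_a(x-y)u_t^2(x)u_t^2(y)\,dx\,dy
= t^3\int_{\R^3}\int_{\R^3}\frac{1-e^{-\frac{|x-y|}{ta}}}{|x-y|}u^2(x)u^2(y)\,dx\,dy;
\]
note that $a$ is effectively replaced by $ta$ in the kernel, so this term is \emph{not} a pure power of $t$, which is exactly the feature that forces the more delicate analysis below.

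Next I would substitute these expressions, together with the explicit formula \eqref{eq7.11} for $J_a(u)$, into the quantity $g(1)-g(t)-\tfrac{1-t^3}{3}J_a(u)$. The two gradient contributions both equal $\tfrac{1-t^3}{2}\|\nabla u\|_2^2$ and cancel identically, leaving the sum of an $L^p$-term $\Phi(t)\|u\|_p^p$ and a nonlocal term $\Psi(t)$, which I would treat separately. For the $L^p$ part I expect $\Phi(t)=\tfrac{1}{3p}\bigl[2(p-3)+3t^{2p-3}-(2p-3)t^3\bigr]$, and I would show $\Phi\ge0$ by elementary calculus: $\Phi(1)=0$, and $\Phi'(t)=\tfrac{2p-3}{p}\bigl(t^{2p-4}-t^2\bigr)$ is negative on $(0,1)$ and positive on $(1,+\infty)$ because $2p-4>2$ (here $p>3$ already suffices), so $t=1$ is the global minimum.

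For the nonlocal part, after the substitution $b=|x-y|/a$ the coefficient multiplying $u^2(x)u^2(y)$ in the integrand collapses to
\[
\frac{q^2}{4}\cdot\frac{1}{ab}\left[t^3\bigl(e^{-b/t}-e^{-b}\bigr)+\frac{1-t^3}{3}\,b\,e^{-b}\right],
\]
which is nonnegative by Lemma \ref{lem2.3}. Since the kernel $u^2(x)u^2(y)$ and the factor $ab$ are nonnegative, integrating preserves the sign and gives $\Psi(t)\ge0$. Adding the two nonnegative contributions yields \eqref{eq7.12}.

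The main obstacle is the bookkeeping in the scaling step and, above all, recognizing that the nonlocal integrand can be arranged \emph{exactly} into the form appearing in Lemma \ref{lem2.3}; once that algebraic identity is in hand the inequality follows almost immediately. The radiality of $u$ enters the present estimate only indirectly, through Remark \ref{rem1}, which guarantees that the Pohozaev identity underlying the definition of $J_a$ is available.
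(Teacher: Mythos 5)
Your proposal is correct and follows essentially the same route as the paper: the same scaling identities, the same decomposition of $I_a(u)-I_a(t^2u(tx))-\tfrac{1-t^3}{3}J_a(u)$ into a nonnegative $L^p$ term handled by elementary calculus of the function $\tfrac{(1-t^3)(2p-3)}{3}+t^{2p-3}-1$, and a nonnegative nonlocal term reduced exactly to the inequality of Lemma \ref{lem2.3}. No gaps.
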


\begin{proof}
	Noting that
	\begin{equation}\label{eq7.14}
		\begin{aligned}
			I_a(t^2u(tx))=\frac{t^3}{2}\|\nabla u\|_2^2+\frac{q^2t^3}{4}\int_{\R^3}\int_{\R^3}\frac{1-e^{-\frac{|x-y|}{at}}}{|x-y|}u^2(x)u^2(y)dxdy-\frac{t^{2p-3}}{p}\|u\|^p_p,
		\end{aligned}
	\end{equation}
	it thus follows from \eqref{eq7.11} that
		\begin{equation}\label{eq7.13}
			\begin{aligned}
		&I_a(u)-I_a(t^2u(tx))\\
		&=\frac{1-t^3}{2}\|\nabla u\|_2^2+\frac{t^{2p-3}-1}{p}\|u\|^p_p+\frac{q^2}{4}\int_{\R^3}\int_{\R^3}\frac{1-e^{-\frac{|x-y|}{a}}-t^3(1-e^{-\frac{|x-y|}{at}})}{|x-y|}u^2(x)u^2(y)dxdy\\
		&= \frac{1-t^3}{3}J_a(u)+\frac{1}{p}\|u\|_p^p\left[\frac{(1-t^3)(2p-3)}{3}+(t^{2p-3}-1)\right]\\
		&~~~~-\frac{q^2}{4}\int_{\R^3}\int_{\R^3}\frac{(1-t^3)\left(1-e^{-\frac{|x-y|}{a}}-\frac{|x-y|}{3a}e^{-\frac{|x-y|}{a}}\right)}{|x-y|}u^2(x)u^2(y)dxdy\\
		&~~~~+\frac{q^2}{4}\int_{\R^3}\int_{\R^3}\frac{1-e^{-\frac{|x-y|}{a}}-t^3(1-e^{-\frac{|x-y|}{at}})}{|x-y|}u^2(x)u^2(y)dxdy\\
		&= \frac{1-t^3}{3}J_a(u)+\frac{1}{p}\|u\|_p^p\left[\frac{(1-t^3)(2p-3)}{3}+(t^{2p-3}-1)\right]\\
		&~~~~+\frac{q^2}{4}\int_{\R^3}\int_{\R^3}\frac{t^3\left(e^{-\frac{|x-y|}{at}}-e^{-\frac{|x-y|}{a}}\right)+(1-t^3)\frac{|x-y|}{3a}e^{-\frac{|x-y|}{a}}}{|x-y|}u^2(x)u^2(y)dxdy.
			\end{aligned}
	\end{equation}
	Let 
	$g(t):=\frac{(1-t^3)(2p-3)}{3}+t^{2p-3}-1$ for any $t>0$. A simple calculation shows that
	\begin{equation*}\label{eq7.17}
		g(t)\ge g(1)=0,
	\end{equation*}
	which, combined with \eqref{eq3.24} and \eqref{eq7.13}, implies that 
	$$I_a(u)-I_a(t^2u(tx))\ge \frac{1-t^3}{3}J_a(u).$$
	
\end{proof}

The following corollary follows directly from Lemma~\ref{lem7}.
\begin{cor}\label{cor2}
	For $p \in (4,6)$ and any $u \in \mathcal{M}_{a,r}$, it holds that
	$$I_a(u)=\max_{t>0}I_a(t^2u(tx)).$$
\end{cor}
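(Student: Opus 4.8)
The plan is to deduce Corollary~\ref{cor2} directly from the key inequality \eqref{eq7.12} of Lemma~\ref{lem7}, which already contains all the analytic content; the only work is to read off the maximum location. First I would fix $u \in \mathcal{M}_{a,r}$, so that by the definition \eqref{M} of the Nehari-Pohozaev set we have $J_a(u)=0$. Substituting this into \eqref{eq7.12} gives, for every $t>0$,
\[
	I_a(u)-I_a(t^2u(tx)) \ge \frac{1-t^3}{3}\,J_a(u) = 0,
\]
that is, $I_a(t^2u(tx)) \le I_a(u)$ for all $t>0$.

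Next I would observe that the value $I_a(u)$ is itself attained along the curve $t \mapsto t^2 u(tx)$, namely at $t=1$, since $1^2 u(1\cdot x)=u$. Combining this with the inequality just obtained yields
\[
	I_a(u) = I_a(1^2 u(1\cdot x)) \le \max_{t>0} I_a(t^2 u(tx)) \le I_a(u),
\]
where the last inequality is precisely the pointwise bound established above. Hence all quantities coincide and $I_a(u)=\max_{t>0}I_a(t^2u(tx))$, which is the assertion.

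There is essentially no obstacle here: the corollary is a one-line consequence of Lemma~\ref{lem7} once one notices that $t=1$ recovers $u$ and that $J_a(u)=0$ on $\mathcal{M}_{a,r}$. The only point requiring minor care is the elementary fact that for $u \in \mathcal{M}_{a,r}$ the supremum over $t>0$ is genuinely attained rather than merely approached; this is guaranteed because, by the scaling formula \eqref{eq7.14}, the function $t \mapsto I_a(t^2u(tx))$ behaves like $\tfrac{t^3}{2}\|\nabla u\|_2^2 + O(t^3) - \tfrac{t^{2p-3}}{p}\|u\|_p^p$, which tends to $-\infty$ as $t\to+\infty$ (since $2p-3>3$ for $p>3$) and to $0$ as $t\to 0^+$, so its supremum over $(0,+\infty)$ is a genuine maximum attained at an interior point — and by the inequality above that point must be $t=1$.
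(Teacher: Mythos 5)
Your proof is correct and is exactly the argument the paper intends: the corollary is stated there as following directly from Lemma \ref{lem7}, by setting $J_a(u)=0$ in \eqref{eq7.12} and noting that the value $I_a(u)$ is attained at $t=1$. Your closing remark about the supremum being attained is superfluous, since the upper bound $I_a(u)$ is already achieved at $t=1$, but it does no harm.
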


We proceed to establish a preliminary result that will play a key role in the analysis leading to Theorem \ref{Thm1}.
\begin{lem}\label{lem8}
	Let $p\in(4,6)$. Then for any $u\in \E_r\setminus\{0\}$, there exists a unique $t_u>0$ such that $t_u^2u(t_u\cdot)\in \mathcal{M}_{a,r}$.
\end{lem}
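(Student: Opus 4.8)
The plan is to analyse the fiber map $t\mapsto I_a(t^2u(t\cdot))$ and to recast the membership condition $t_u^2u(t_u\cdot)\in\mathcal{M}_{a,r}$, namely $J_a(t_u^2u(t_u\cdot))=0$, as a scalar equation whose unique solvability can be read off from monotonicity. Writing $w_t:=t^2u(t\cdot)$, I would first perform in each term of $J_a(w_t)$ the change of variables $\xi=tx,\ \eta=ty$ (exactly as in the derivation of \eqref{eq7.14}), using $\|\nabla w_t\|_2^2=t^3\|\nabla u\|_2^2$, $\|w_t\|_p^p=t^{2p-3}\|u\|_p^p$, and the analogous rescaling of the nonlocal term in \eqref{eq7.11}. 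This gives
\[
J_a(w_t)=\tfrac32 t^3\|\nabla u\|_2^2-\tfrac{2p-3}{p}\,t^{2p-3}\|u\|_p^p+\tfrac{3q^2}{4}\,t^3 D(t),
\]
where, setting $\psi(s):=1-e^{-s}-\tfrac{s}{3}e^{-s}$,
\[
D(t):=\irt\irt\frac{\psi\!\big(|x-y|/(ta)\big)}{|x-y|}\,u^2(x)u^2(y)\,dx\,dy .
\]
Since $t>0$, dividing by $t^3$ shows that $w_t\in\mathcal{M}_{a,r}$ if and only if $\Phi(t)=0$, where
\[
\Phi(t):=\tfrac32\|\nabla u\|_2^2+\tfrac{3q^2}{4}D(t)-\tfrac{2p-3}{p}\|u\|_p^p\,t^{2p-6}.
\]
Note that $w_t\in\E_r\setminus\{0\}$ for every $t>0$ (the rescaling preserves radial symmetry and the space $\E$), so each term, in particular $D(t)$, is finite.

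It then suffices to show that $\Phi$ is continuous and \emph{strictly decreasing} on $(0,\infty)$, with $\Phi(t)>0$ for $t$ small and $\Phi(t)\to-\infty$ as $t\to+\infty$. The last term is strictly increasing, since $2p-6>0$ for $p\in(4,6)$ and $\|u\|_p^p>0$ (because $u\not\equiv0$ and $\E_r\hookrightarrow L^p$), so $-\tfrac{2p-3}{p}\|u\|_p^p\,t^{2p-6}$ decreases strictly from $0$ to $-\infty$. The decisive point is the monotonicity of $D$: a direct computation yields $\psi'(s)=\tfrac13(s+2)e^{-s}>0$ for $s\ge0$, hence $\psi$ is strictly increasing on $[0,\infty)$; since $s=|x-y|/(ta)$ is strictly decreasing in $t$ for each fixed $x\neq y$, the integrand $\psi(|x-y|/(ta))/|x-y|$ is pointwise decreasing in $t$, and therefore $D$ is strictly decreasing. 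As $\tfrac32\|\nabla u\|_2^2$ is a positive constant ($\|\nabla u\|_2^2>0$ since $0\ne u\in D^{1,2}(\RT)$), $\Phi$ is strictly decreasing. Existence and uniqueness of $t_u$ follow from the intermediate value theorem combined with strict monotonicity.

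The only genuinely delicate points are the monotonicity just described and the endpoint behaviour of $D$. As $t\to+\infty$ one has $s\to0$ and $\psi(s)\to0$, and by dominated convergence $D(t)\to0$, so $\Phi(+\infty)=\tfrac32\|\nabla u\|_2^2-\infty=-\infty$. As $t\to0^+$ the integrand increases toward the possibly non-integrable Coulomb kernel $1/|x-y|$; to avoid any issue with the limiting energy $V(u^2,u^2)$ (not controlled by $\E_r\hookrightarrow L^\tau$, $\tau>18/7$), I would simply use that $D$ is decreasing, so $D(t)\ge D(1)>0$ for $0<t\le 1$, whence $\Phi(t)\ge \tfrac32\|\nabla u\|_2^2+\tfrac{3q^2}{4}D(1)-\tfrac{2p-3}{p}\|u\|_p^p\,t^{2p-6}>0$ for $t$ small enough. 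I expect establishing $\psi'>0$ and this endpoint bookkeeping to be the main obstacles; the remaining continuity and dominated-convergence arguments are routine.
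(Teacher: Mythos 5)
Your proof is correct, and it takes a genuinely different route from the paper on the uniqueness part. The paper also works with the fiber map $\zeta(t)=I_a(t^2u(t\cdot))$ and the equivalence $\zeta'(t)=0\Leftrightarrow J_a(t^2u(t\cdot))=0$ (this is exactly \eqref{eq7.23}), but it gets existence by showing $\zeta$ attains an interior maximum ($\zeta\to0$ as $t\to0^+$, $\zeta>0$ for small $t$, $\zeta<0$ for large $t$), and it gets uniqueness by applying the key inequality \eqref{eq7.12} of Lemma \ref{lem7} in both directions: if $t_1,t_2$ are both critical, then $I_a(t_1^2u(t_1\cdot))=I_a(t_2^2u(t_2\cdot))$, and the strict positivity for $t\neq1$ of the remainder terms in \eqref{eq7.13} forces $t_1=t_2$. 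You instead factor $\zeta'(t)=t^2\Phi(t)$ and prove $\Phi$ is continuous and strictly decreasing from positive values to $-\infty$, the decisive point being $\psi'(s)=\tfrac{s+2}{3}e^{-s}>0$ (so the nonlocal term $D(t)$ is monotone in $t$) together with $2p-6>0$; the intermediate value theorem then delivers existence and uniqueness simultaneously. Your argument is more elementary and entirely self-contained — it does not use Lemma \ref{lem7} at all — whereas the paper's uniqueness step recycles machinery that is needed anyway for Corollary \ref{cor2} and the minimax characterization of $c_a$. Your endpoint bookkeeping (finiteness of $D(t)$ because $t^2u(t\cdot)\in\E_r$, the bound $D(t)\le D(1)$ for $t\ge1$ to force $\Phi(t)\to-\infty$, and $D(t)\ge0$ near $t=0$) is adequate, and the strictness of the monotonicity only needs $u\not\equiv0$, which you have. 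No gaps.
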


\begin{proof}
	Fix $u\in\E_r\setminus\{0\}$ and define $\zeta(t):=I_a(t^2u(tx))$ on $(0,+\infty)$. Clearly, it follows from \eqref{eq7.11} and \eqref{eq7.14} that
	\begin{equation}\label{eq7.23}
		\begin{aligned}
		\zeta'(t)=0
		\Leftrightarrow &\ \frac{3t^2}{2}\|\nabla u\|_2^2+\frac{3q^2t^2}{4}\int_{\R^3}\int_{\R^3}\frac{1-e^{-\frac{|x-y|}{at}}}{|x-y|}u^2(x)u^2(y)dxdy\\
		&\ -\frac{q^2t}{4a}\int_{\R^3}\int_{\R^3}e^{-\frac{|x-y|}{at}}u^2(x)u^2(y)dxdy
		-\frac{(2p-3)t^{2p-4}}{p}\|u\|^p_p=0\\
		\Leftrightarrow &\  \frac{1}{t}J_a(t^2u(tx))=0 \\
		\Leftrightarrow &\ t^2u(tx)\in \mathcal{M}_{a,r}.
		\end{aligned}
	\end{equation}
	One easily checks that $\lim\limits_{t\to 0^+}\zeta(t)=0$, $\zeta(t)>0$ for sufficiently small $t>0$, and $\zeta(t)<0$ for large $t$. Therefore, there exists $t_u>0$ such that $\zeta(t_u)=\max\limits_{t>0}\zeta(t)$, which means that 
	$$\zeta'(t_u)=0 ~~\text{ and }~~ t_u^2u(t_ux)\in \mathcal{M}_{a,r}.$$
	
	Finally, we claim that $t_u$ is unique. Indeed, for any given $u\in\E_r\setminus\{0\}$, suppose that $t_1,t_2>0$ satisfy  $\zeta'(t_1)=\zeta'(t_2)=0$. 
	Then we deduce from \eqref{eq7.23} that
	$$J_a(t_1^2u(t_1x))=J_a(t_2^2u(t_2x))=0.$$
	Combining this with \eqref{eq7.12}, one has
	$$I_a(t_1^2u(t_1x))\ge I_a(t_2^2u(t_2x)) ~~\text{ and }~~ I_a(t_2^2u(t_2x))\ge I_a(t_1^2u(t_1x)),$$
	which implies that $t_1=t_2$. 
	
	Therefore, we complete the proof.
	 
\end{proof}

As an immediate consequence of Proposition~\ref{prop1}, Corollary~\ref{cor2} and Lemma~\ref{lem8}, we obtain the following minimax property.
\begin{cor}\label{lem9}
	Let $p\in(4,6)$, then it holds
	$$0<c_a=\inf_{u\in \mathcal{M}_{a,r}}I_a(u)=\inf_{u\in \E_r\setminus\{0\} }\max_{t>0}I_a(t^2u(tx)).$$
\end{cor}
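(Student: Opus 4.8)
The plan is to prove the two equalities $c_a=m_a$ and $m_a=\ell_a$ separately, where I abbreviate $m_a:=\inf_{u\in\mathcal{M}_{a,r}}I_a(u)$ and $\ell_a:=\inf_{u\in\E_r\setminus\{0\}}\max_{t>0}I_a(t^2u(tx))$, and write $u_t:=t^2u(tx)$; the strict positivity $c_a>0$ is already recorded in \eqref{eq7.15}.

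First I would establish $m_a=\ell_a$, which is where Lemma \ref{lem8} and Corollary \ref{cor2} enter directly. The structural point is that $u\mapsto u_t$ is a multiplicative one-parameter group action, $(u_t)_s=u_{ts}$, so the orbit $\{u_t:t>0\}$ depends only on the ray through $u$. Given $u\in\E_r\setminus\{0\}$, Lemma \ref{lem8} yields a unique $t_u>0$ with $w:=u_{t_u}\in\mathcal{M}_{a,r}$, and Corollary \ref{cor2} together with the group law gives $I_a(w)=\max_{s>0}I_a(w_s)=\max_{t>0}I_a(u_t)$. Since every element of $\mathcal{M}_{a,r}$ arises as such a $w$ (taking $u=w$ forces $t_u=1$ by the uniqueness in Lemma \ref{lem8}), passing to the infimum over $u$ gives $\ell_a=m_a$.

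Next I would prove $c_a\le\ell_a$ by showing that, for each fixed $u\in\E_r\setminus\{0\}$, a reparametrization of $t\mapsto u_t$ is an admissible mountain-pass path. Using the scaling identities $\|\nabla u_t\|_2^2=t^3\|\nabla u\|_2^2$ and $V(u_t^2,u_t^2)=t^3V_t$, with $V_t:=\int_{\R^3}\int_{\R^3}\frac{1-e^{-|x-y|/(at)}}{|x-y|}u^2(x)u^2(y)\,dxdy$, together with $\lim_{t\to0^+}I_a(u_t)=0$ from the proof of Lemma \ref{lem8}, I get $\|u_t\|_\E\to0$ as $t\to0^+$, so the curve extends continuously by $u_0:=0$; and since $p>4$ gives $2p-3>3$, the $\|u\|_p^p$ term in \eqref{eq7.14} dominates for large $t$, so $I_a(u_t)<0$ for $t$ large. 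Reparametrizing onto $[0,1]$ then produces $\gamma\in\Gamma$ with $\max_{[0,1]}I_a\circ\gamma\le\max_{t>0}I_a(u_t)$, whence $c_a\le\max_{t>0}I_a(u_t)$ and, taking the infimum, $c_a\le\ell_a=m_a$.

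Finally I would close the loop with $m_a\le c_a$. By the radial form of Theorem \ref{Thm2} there is a ground state $u^*\in\E_r$ with $I_a(u^*)=c_a$, which by Proposition \ref{prop1} has minimal energy among nontrivial solutions. Being a nontrivial weak solution, $u^*$ satisfies the Nehari identity $I_a'(u^*)[u^*]=0$ and the Pohozaev identity $P_a(u^*)=0$ of Lemma \ref{lem6} and Remark \ref{rem1}, hence $J_a(u^*)=2I_a'(u^*)[u^*]+P_a(u^*)=0$ and $u^*\in\mathcal{M}_{a,r}$; consequently $m_a\le I_a(u^*)=c_a$. Combining the three steps yields $c_a\le\ell_a=m_a\le c_a$, i.e. $c_a=m_a=\ell_a$. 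I expect the main obstacle to be the admissibility check in the third step: verifying the $\E_r$-convergence $u_t\to0$ as $t\to0^+$ (hence continuity of the reparametrized path at the endpoint) and carefully exploiting the multiplicative group structure so that the orbit identifications in the first step are legitimate. The remaining energy manipulations are routine given \eqref{eq7.14}, \eqref{eq7.11} and Corollary \ref{cor2}.
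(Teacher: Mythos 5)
Your proof is correct and fills in, with the right ingredients, exactly the argument the paper leaves implicit when it calls the corollary an ``immediate consequence'' of Proposition~\ref{prop1}, Corollary~\ref{cor2} and Lemma~\ref{lem8}: the ground state lies in $\mathcal{M}_{a,r}$ by the Nehari and Pohozaev identities, the two infima coincide via the unique projection of Lemma~\ref{lem8} and the group law $(u_t)_s=u_{ts}$, and reparametrized scaling curves give admissible mountain-pass paths. The only detail you rightly flag as needing verification, the $\E_r$-continuity of $t\mapsto t^2u(t\cdot)$ and the endpoint limit $\|t^2u(t\cdot)\|_{\E}\to 0$, does go through, e.g.\ via the pointwise bound $\K_{at}\le t^{-1}\K_a$ for $t\le 1$, which gives $V\bigl((u_t)^2,(u_t)^2\bigr)\le t^{2}V(u^2,u^2)$.
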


Next, we start to prove Theorem \ref{Thm1}.
\begin{proof}[The proof of Theorem \ref{Thm1}]
Let us denote by $c_0$ the corresponding ground state energy of system~\eqref{SP} for $p \in (4,6)$. Arguing as above we can show that 
    $$0<c_0=\inf_{v\in \mathcal{M}_{0,r}}I_0(v)=\inf_{v\in E_r\setminus\{0\} }\max_{t>0}I_0(t^2v(tx)),$$
        where $\M_{0,r}$ is the Nehari-Pohozaev set related to $I_0$. According to \cite[Theorem 1.1]{IR12}, there exists $u \in E_r\setminus\{0\}$ a ground state of system~\eqref{SP} for $p \in (4,6)$. 
Since $u\in E_r\hookrightarrow\E_r$, it follows from Lemma \ref{lem8} that there exists a unique value $t_u>0$ such that $t_u^2 u(t_u \cdot)\in \mathcal{M}_{a,r}$. 
Therefore	
\begin{equation}\label{eq7.18}
	c_0=I_0(u)\ge I_0(t_u^2 u(t_u x))\ge I_a(t_u^2 u(t_u x))\ge c_a.
\end{equation}

From now on, we assume that $\{(u_a,\phi_a)\}_{a>0}$ in $\E_r\times\mathcal{A}_r$ is the family of ground state solutions of system \eqref{eq1.1}, where we are using the notation $\phi_a:=\phi_{u_a}$. Then, by  Corollary \ref{lem9}, we have  
$$J_a(u_a)=0 ~~\text{ and }~~ I_a(u_a)=c_a.$$
Thus, it yields
\begin{equation}\label{eq3.26}
	\begin{aligned}
		c_a
		&=I_a(u_a)-\frac{1}{2p-3}J_a(u_a)\\
		&=\frac{p-3}{2p-3}\|\nabla u_a\|_2^2+\frac{q^2(p-3)}{2(2p-3)}\int_{\R^3}\int_{\R^3}\K_a(x-y)u_a^2(x)u_a^2(y)dxdy\\
		&~~~~+\frac{q^2}{4a(2p-3)}\int_{\R^3}\int_{\R^3}e^{-{\frac{|x-y|}{a}}}u_a^2(x)u_a^2(y)dxdy,
	\end{aligned}
\end{equation}
and so \eqref{eq7.18} gives that the family $\{u_a\}_{a>0}$ is bounded in $D_r^{1,2}(\R^3)$. Consequently, there exists $u_0\in D_r^{1,2}(\R^3)$ such that, up to subsequences, as $a\to 0$,
$$u_a\weakto u_0 \text{ weakly in } D_r^{1,2}(\R^3),$$
and
\begin{equation}\label{eq7.30}
	u_a\to u_0 \text{ strongly in } L_{\rm loc}^\tau(\R^3), \forall \tau\in[1,6).
\end{equation}
Furthermore, owing to the fact that $\int_{\R^3}\int_{\R^3}\K_a(x-y)u_a^2(x)u_a^2(y)dxdy\le c_a\le c_0$ obtained in \eqref{eq3.26}, it follows from Lemma \ref{thmE} that 
$$\|\nabla \phi_a\|_2^2 \le \|\nabla \phi_a\|_2^2 + a^2 \|\Delta \phi_a\|_2^2=4\pi\int_{\R^3}\int_{\R^3}\K_a(x-y)u_a^2(x)u_a^2(y)dxdy \le C.$$ 
Then also $\{\phi_a\}_{a>0}$ is bounded in $D_r^{1,2}(\R^3)$ and there exists $\phi_0\in D_r^{1,2}(\RT)$ such that
\begin{equation}\label{eq7.26}
	\phi_a\weakto \phi_0 \text{ weakly in } D_r^{1,2}(\RT), \text{ as } a\to 0.
\end{equation}
Passing to the limit as $a\to 0$ in the identity
$$\int_{\RT}\nabla\phi_a\nabla\eta dx+a^2\int_{\RT}\Delta\phi_a\Delta\eta dx=\int_{\RT}4\pi u_a^2\eta dx,~~ \eta\in C_c^\infty(\R^3),$$
and using that 
$$\left|a^2\int_{\RT}\Delta\phi_a\Delta\eta dx\right|\le a\|a\Delta\phi_a\|_2\|\Delta\eta\|_2\le aC\to 0,$$
we thus get
\begin{equation*}\label{eq7.27}
	\int_{\RT}\nabla\phi_0 \cdot \nabla\eta dx=\int_{\RT}4\pi u_0^2\eta dx,~~ \eta\in C_c^\infty(\R^3).
\end{equation*} 

Set $\varphi\in C_c^\infty(\R^3)$ with $\Omega:={\rm supp}(\varphi)$. We know that
\begin{equation}\label{eq2.4}
	\int_{\Omega}\nabla u_a \cdot\nabla \varphi dx+q^2 \int_{\Omega}\phi_{a}u_a\varphi dx=\int_{\Omega}|u_a|^{p-2}u_a\varphi dx.
\end{equation}
Applying H\"older inequality and \eqref{eq7.30}, we get, as $a\to0$,
$$\int_{\Omega}\nabla u_a \cdot\nabla \varphi dx\to\int_{\Omega}\nabla u_0 \cdot\nabla \varphi dx
~~\text{ and }~~
\int_{\Omega}|u_a|^{p-2}u_a\varphi dx\to \int_{\Omega}|u_0|^{p-2}u_0\varphi dx.
$$
Furthermore, by  \eqref{eq7.30} and \eqref{eq7.26} again, we also have that
\begin{equation*}
	\begin{aligned}
		&\left|\int_{\Omega}\phi_{a}u_a\varphi dx-\int_{\Omega}\phi_{0}u_0\varphi dx\right|\\
		&\le \int_{\Omega}|\phi_{a}-\phi_{0}||u_a||\varphi| dx+\int_{\Omega}|\phi_{0}||u_a-u_0||\varphi| dx\\
		&\le\left(\int_{\Omega}|\phi_{a}-\phi_{0}|^2dx\right)^{\frac12}\left(\int_{\Omega}|u_a|^6 dx\right)^{\frac16}\left(\int_{\Omega}|\varphi|^3 dx\right)^{\frac13}\\
		&~~~~+\left(\int_{\Omega}|\phi_{0}|^6 dx\right)^{\frac16}\left(\int_{\Omega}|u_a-u_0|^3 dx\right)^{\frac13}\left(\int_{\Omega}|\varphi|^2 dx\right)^{\frac12}\to0, \quad\text{ as } a\to 0.
	\end{aligned}
\end{equation*}
As a consequence, for any $\varphi\in C_c^{\infty}(\R^3)$, we deduce from \eqref{eq2.4} that, as $a\to 0$,
$$\int_{\Omega}\nabla u_0 \cdot\nabla \varphi dx+q^2 \int_{\Omega}\phi_{0}u_0\varphi dx=\int_{\Omega}|u_0|^{p-2}u_0\varphi dx.$$

The proof of Theorem \ref{Thm1} is thus concluded.

\end{proof}

\bigskip

{\bf Acknowledgments}
A.P. is member of INdAM-GNAMPA. A.P. is financed by European Union - Next Generation EU - PRIN 2022 PNRR ``P2022YFAJH Linear and Nonlinear PDE's: New directions and Applications",  by the Italian Ministry of University and Research under the Program Department of Excellence L. 232/2016 (Grant No. CUP D93C23000100001), and by  INdAM-GNAMPA Project {\em Metodi variazionali e
topologici per alcune equazioni di Schr\"odinger nonlineari} (CUP E53C23001670001). 
L.Y. is supported by the China Scholarship Council (Grant No. 202406660027). He is grateful to Prof. Erasmo Caponio (Politecnico di Bari) for his valuable comments and suggestions on this paper, and also thanks Ph.D. Yafei Li (Beihang University) for helpful discussions.

\smallskip

{\bf Data availability} Data sharing not applicable to this article as no datasets were generated
or analysed during the current study.

\smallskip

{\bf Conflict of interest} The authors declare that they have no conflict of interest.

\end{document}